\numberwithin{equation}{section}
\theoremstyle{plain}
\newtheorem{thm}{Theorem}[section]
\newtheorem{cor}[thm]{Corollary}
\newtheorem{prop}[thm]{Proposition}
\theoremstyle{definition}
\newtheorem{defn}[thm]{Definition}
\newtheorem{rem}[thm]{Remark}
\numberwithin{equation}{section}
\def\Re{\operatorname{Re}}
\def\beq{\begin{eqnarray}}
\def\eeq{\end{eqnarray}}
\def\beqa{\begin{eqnarray*}}
\def\eeqa{\end{eqnarray*}}
\def\beqn{\begin{equation}}
\def\eeqn{\end{equation}}
\def\mg#1{}
\renewcommand{\epsilon}{\varepsilon}
\renewcommand{\phi}{\varphi}
\begin{document}
\title[Few maps in the rich structure]{Few maps in the rich structure for the domains $G_{E(3;3;1,1,1)}$ and $G_{E(3;2;1,2)}$}
\author{Dinesh Kumar Keshari, Shubhankar Mandal and Avijit Pal}
\address[ D. K. Keshari]{School of Mathematical Sciences, National Institute of Science Education and Research Bhubaneswar, An OCC of Homi Bhabha National Institute, Jatni, Khurda,  Odisha-752050, India}
\email{dinesh@niser.ac.in}
\address[S. Mandal]{Department of Mathematics, IIT Bhilai, 6th Lane Road, Jevra, Chhattisgarh 491002}
\email{S. Mandal:shubhankarm@iitbhilai.ac.in}

\address[A. Pal]{Department of Mathematics, IIT Bhilai, 6th Lane Road, Jevra, Chhattisgarh 491002}
\email{A. Pal:avijit@iitbhilai.ac.in}

\subjclass[2010]{30C80, 32A38, 47A13, 47A48, 47A56.}

\keywords{$\mu$-synthesis, Generalized tetrablock, Realization formula, Simply connected, Polynomially convex, Distinguished boundary, Schwarz lemma}

\begin{abstract}
  The primary goal of a rich structure for some naturally occurring domains $\mathcal X$ is to connect four naturally occurring objects of analysis in the context of $3\times 3$ analytic matrix functions on $\mathbb D$. Combining this rich structure with the classical realisation formula and Hilbert space models in the sense of Agler, one can effectively construct functions in the space $\mathcal O(\mathbb D,\mathcal X)$ of analytic maps from $\mathbb D$ to $\mathcal X$. This allows one to obtain solvability criteria for two cases of the $\mu$-synthesis problem. We describe few maps in the rich structure. We define $SE$ map  between $\mathcal S_{1}(\mathbb C^3,\mathbb C^3)$ and $\mathcal S_{3}(\mathbb C,\mathbb C)$ and establish the relation between $\mathcal{S}_{1}(\mathbb C^3,\mathbb C^3)$ and the set of analytic kernels on $\mathbb{D}^{3}$. We obtain the $UW$ procedure and using the $UW$ procedure we construct  the $Upper \,\,W$ and $Upper\,\ E$ maps. We also construct $Right~S$ and $SE$ maps. We show how the interpolation problems for $G_{E(3;3;1,1,1)}$ and $G_{E(3;2;1,2)}$ can be reduced to a standard matricial Nevanlinna-Pick problem.

\end{abstract}
\maketitle
\vskip-.5cm

\section{{\bf{Introduction}}}

\pagenumbering{arabic}
Let  $\Omega_1,\Omega_2\subset \mathbb C^n$ be domains in $\mathbb C^n.$  Suppose $\mathcal{O}\left(\Omega_{1}, \Omega_{2}\right)$ is used to represent the set of all holomorphic functions from $\Omega_{1} $ to $\Omega_{2}$. 
We discuss the classic Nevanlinna-Pick problem (see for instance \cite{AglerM, AglerMY}).

\textit{The Nevanlinna-Pick problem:} Assume that $z_1,\ldots, z_n \in \mathbb{D}$ and $\lambda_1,\ldots, \lambda_n \in \mathbb{D}$, whether there is an analytic function $F:\mathbb{D}\rightarrow\mathbb{C}$ such that $F(z_i)=\lambda_i$, $i=1,\dots,n$ and $|F(z)|\leq1$ for all $z\in \mathbb{D}$?

A necessary and sufficient condition to solve this problem was found by G.Pick in $1916.$ He showed that this problem is solvable if and only if the following pick matrix   $${\left(\left(\frac{1-\bar{\lambda_i}\lambda_j}{1-\bar{z_i}z_j}\right)\right)_{i,j=1}^n}$$ is positive semi-definite. Unaware of Pick's work, R. Nevanlinna also considered the problem in $1919.$ Later, while dealing with robust stabilization problem John Doyle introduced the $\mu$-synthesis problem (see for instance \cite{ds,jcd,Doyle,aj}). The $\mu$-synthesis problem is an interpolation problem for analytic matrix functions on the disc which are subject to a boundedness condition. It is a generalisation of the Nevanlinna-
Pick problem. 

Let $\mathcal M_{n\times n}(\mathbb{C})$ be the set of all $n\times n$ complex matrices and  $E$ be a linear subspace of $\mathcal M_{n\times n}(\mathbb{C}).$ We define the structured singular value (see \cite{Doyle}) $\mu_{E}: \mathcal M_{n\times n}(\mathbb{C}) \to [0,\infty)$ as follows:
\begin{equation}\label{mu}
	\mu_{E}(A):=\frac{1}{\inf\{\|X\|: \,\ \det(1-AX)=0,\,\, X\in E\}},\;\; A\in \mathcal M_{n\times n}(\mathbb{C})
\end{equation}
with the understanding that $\mu_{E}(A):=0$ if $1-AX$ is  nonsingular for all $X\in E$ \cite{ds, jcd}.   Here $\|\cdot\|$ denotes the operator norm. 

\textit{The $\mu_E$-synthesis problem (see \cite{omar})}: Given distinct points $\lambda_j \in \mathbb{D}$ and $W_j\in \mathcal{M}_{n\times n}(\mathbb{C})$, $j=1,\dots,k$. Construct, if possible an analytic $n\times n$ matrix-valued function $F$ on $\mathbb{D}$ such that $F(\lambda_j) = W_j$ for $j = 1,\dots,k$ and $\mu_{E}(F(\lambda))\leq 1$ for all $\lambda \in \mathbb{D}$.

Note that for $n = m = 1$, the $\mu_E$-synthesis problem is the classical Nevanlinna-Pick problem. Moreover, in the case $n = m$ and $E=\{c\mathbb{I}_n:c\in \mathbb{C}\}$, $\mu_{E}$ is the spectral radius $r$ and the $\mu_E$-synthesis problem is the spectral Nevanlinna-Pick problem (see \cite{omar}). 

\textit{The spectral Nevanlinna-Pick problem(\textasteriskcentered):}  Given distinct points $\lambda_j\in \mathbb{D}$ and $W_j\in \mathcal{M}_{k\times k}(\mathbb{C})$, $j=1,\dots,n$. Construct, if possible an analytic $k\times k$ matrix-valued function $F$ on $\mathbb{D}$
such that $F(\lambda_j) = W_j$ for $j = 1,\dots,n$ and $r(F(\lambda))\leq 1$ for all $\lambda \in \mathbb{D}$. Here $r$ is the spectral radius.

 In the attempt to solve the two-by-two spectral Nevanlinna-Pick problem, Agler and Young introduced in \cite{nj} a domain in $\mathbb{C}^2$ known as the \textit{symmetrised bidisc} $\Gamma_{E(2;1;2)}$. Berocovici and Fioas characterized the existence of an interpolant in $(\textasteriskcentered)$. Their characterization involves a nontrivial search for $m$ appropriate matrix in $\text{GL}_n(\mathbb C).$ Agler and Young\cite{JAY}, Costara\cite{Ccostara} proved that in the case $W_1,\ldots,W_m$ are non-derogatory then $(\textasteriskcentered)$ is equivalent to an interpolation problem from $\mathbb D$ to symmetrized polydisc. The problem $(\textasteriskcentered)$ descends to a region of much lower dimension with many pleasant properties. By taking $E=\{\operatorname{diag}[z_{1},z_{2}]\,\, \in \mathbb{C}^{2\times 2}: z_{1},z_{2}\in \mathbb{C}\},$ Abouhajar, White and Young \cite{Abouhajar} showed that $(\textasteriskcentered)$ is equivalent to an interpolation problem from $\mathbb D$ to tetrablock. By considering $E=\{\operatorname{diag}[z_{1}I_{n-1},z_{2}]\,\, \in \mathbb{C}^{n \times n}: z_{1},z_{2}\in \mathbb{C}\},$ Bharali in \cite{bha}, established that $(\textasteriskcentered)$ is equivalent to an interpolation problem from $\mathbb D$ to $\mathbb E_n.$ For this case also, the problem $(\textasteriskcentered)$ descends to a region of much lower dimension.
 
In 2017, Brown et al. obtain, for certain naturally occuring domains $\mathcal X$(symmetrized bidisc and tetrablock), a rich structure of interrelations among four naturally arising objects of analysis associated with $2\times 2$ analytic matrix functions on $\mathbb{D}.$ This rich structure, together with the classical realization formula and Hilbert space model in the sense of Agler, provided an effective framework for constructing functions in the space $\mathcal O(\mathbb{D},\bar{\mathcal X})$, and thereby yielding solvability criteria for $\mu$-synthesis problem. Rich structure for a domain $\mathcal X$ is encapsulated in the diagram see(\cite[page 1706]{BLY}). 

 Let  $E(n;s;r_{1},\dots,r_{s})\subset \mathcal M_{n\times n}(\mathbb{C})$ be the vector subspace comprising block diagonal matrices, defined as follows:
\begin{equation}\label{ls}
	E=E(n;s;r_{1},...,r_{s}):=\{\operatorname{diag}[z_{1}I_{r_{1}},....,z_{s}I_{r_{s}}]\in \mathcal M_{n\times n}(\mathbb{C}): z_{1},...,z_{s}\in \mathbb{C}\},
\end{equation}
where $\sum_{i=1}^{s}r_i=n.$  We recall the definitions of $\Gamma_{E(3; 3; 1, 1, 1)}$, $\Gamma_{E(3; 2; 1, 2)}$ and $\Gamma_{E(2; 2; 1, 1)}$, \cite{Abouhajar, bha, pal1}. The sets $\Gamma_{E{(2;2;1,1)}}$, $\Gamma_{E(3; 3; 1, 1, 1)}$ and $\Gamma_{E(3; 2; 1, 2)}$ are defined as 
 \begin{equation*}
		\begin{aligned}
			\Gamma_{E{(2;2;1,1)}}:=&\Big \{\textbf{x}=(x_1=a_{11}, x_2=a_{22}, x_3=a_{11}a_{22}-a_{12}a_{21}=\det A)\in \mathbb C^3: A\in \mathcal M_{2\times 2}(\mathbb C)\\~&{\rm{and}}~\mu_{E(2; 2; 1, 1)}\leq 1\Big \},
		\end{aligned}
\end{equation*}	
\begin{equation*}
	\begin{aligned}
		\Gamma_{E{(3;3;1,1,1)}}:=\Big \{\textbf{x}=(x_1=a_{11}, x_2=a_{22}, x_3=a_{11}a_{22}-a_{12}a_{21}, x_4=a_{33}, x_5=a_{11}a_{33}-a_{13}a_{31},&\\ x_6=a_{22}a_{33}-a_{23}a_{32},x_7=\det A)\in \mathbb C^7: A\in \mathcal M_{3\times 3}(\mathbb C)~{\rm{and}}~\mu_{E(3;3;1,1,1)}(A)\leq 1\Big \}
	\end{aligned}
\end{equation*}	
$${\rm{and}}$$  
\begin{equation*}
	\begin{aligned}
		\Gamma_{E(3;2;1,2)}:=\Big\{( x_1=a_{11},x_2=\det \left(\begin{smallmatrix} a_{11} & a_{12}\\
			a_{21} & a_{22}
		\end{smallmatrix}\right)+\det \left(\begin{smallmatrix}
			a_{11} & a_{13}\\
			a_{31} & a_{33}
		\end{smallmatrix}\right),x_3=\operatorname{det}A, y_1=a_{22}+a_{33}, &\\ y_2=\det  \left(\begin{smallmatrix}
			a_{22} & a_{23}\\
			a_{32} & a_{33}\end{smallmatrix})\right)\in \mathbb C^5
		:A\in \mathcal M_{3\times 3}(\mathbb C)~{\rm{and}}~\mu_{E(3;2;1,2)}(A)\leq 1\Big\}.
	\end{aligned}
\end{equation*}

The domains $\Gamma_{E(3; 2; 1, 2)}$ and $\Gamma_{E(2; 2; 1, 1)}$  are referred to as as $\mu_{1,3}-$\textit{quotient} and tetrablock, respectively \cite{Abouhajar, bha}. We now describe these objects of the rich structure related to the domains $\Gamma_{E(3; 3; 1, 1, 1)}$ and $\Gamma_{E(3; 2; 1, 2)}$ as follows: 
\begin{enumerate}
	\item $\mathcal S_{1}(\mathbb C^3,\mathbb C^3)$ denotes the $3\times3$ matricial Schur class of the disc, that is, the set of analytic $3\times3$ matrix function $F$ on $\mathbb{D}$ such that $\|F(\lambda)\|\leq 1$ for all $\mathbb{D}$;
	\item  $\mathcal S_{3}(\mathbb C,\mathbb C)$ is the Schur class of tri-disc $\mathbb{D}^3$, that is, $\mathcal O(\mathbb{D}^3,\mathbb{D})$;
	\item $\tilde{\mathcal{R}_{1}}$(in the case of $\Gamma_{E(3; 3; 1, 1, 1)}$) and $\tilde{\mathcal{S}_{1}}$(in the case of $\Gamma_{E(3; 2; 1, 2)}$). These are defined as follows: $$\begin{aligned}
		\tilde{\mathcal{R}_{1}}:=&\{(N^{(1)},N^{(2)},N^{(3)}):N^{(1)},N^{(2)},N^{(3)},K_{(N^{(1)},N^{(2)},N^{(3)})}~\text{are the analytic kernel on }\\ &\mathbb{D}^{3} ~\text{and} \,\,K_{(N^{(1)},N^{(2)},N^{(3)})}\,\, \text{is of rank 1}\},
	\end{aligned}$$
	 $$\begin{aligned}
		\tilde{\mathcal{S}_{1}}:=&\{(N^{(1)}+N^{(2)},N^{(3)}):N^{(1)},N^{(2)},N^{(3)},K_{(N^{(1)},N^{(2)},N^{(3)})}~\text{are the analytic kernel on }\\ &\mathbb{D}^{2} ~\text{and} \,\,K_{(N^{(1)},N^{(2)},N^{(3)})}\,\, \text{is of rank 1}\};~\text{and}
	\end{aligned}$$
	\item $\mathcal O(\mathbb D,\bar{\mathcal X})$, where $\mathcal X$ is either $\Gamma_{E(3; 3; 1, 1, 1)}$ or $\Gamma_{E(3; 2; 1, 2)}$.
\end{enumerate}

We aim to define a few maps in the rich structure. In Section 2, we define $SE$ map  between $\mathcal S_{1}(\mathbb C^3,\mathbb C^3)$ and $\mathcal S_{3}(\mathbb C,\mathbb C)$. Whereas, in Section 3, we establish the relation between $\mathcal{S}_{1}(\mathbb C^3,\mathbb C^3)$ and the set of analytic kernels on $\mathbb{D}^{3}$. We obtain "the $UW$ procedure" and using the $UW$ procedure we construct  the $Upper \,\,W$ and $Upper\,\ E$ maps in Section 4. We construct $Right~S$ and $SE$ maps in this section. Furthermore, Section 5 deals with how the interpolation problems $G_{E(3;3;1,1,1)}$ and $G_{E(3;2;1,2)}$ may be reduced to a standard matricial Nevanlinna-Pick problem.
  
\section{Maps between the sets $\mathcal S_{1}(\mathbb C^3,\mathbb C^3)$ and $\mathcal S_{3}(\mathbb C,\mathbb C)$}
In this section, our aim is to establish the relation between the sets $\mathcal S_{1}(\mathbb C^3,\mathbb C^3)$ and $\mathcal S_{3}(\mathbb C,\mathbb C).$ Suppose $\mathcal H_i=\mathbb C=\mathcal K_i$ for $i=1,2,3,$ $P=F=((F_{ij}))_{i,j=1}^{3}\in\mathcal S_{1}(\mathbb C^3,\mathbb C^3),$ and $X=\left(\begin{smallmatrix}z_1 & 0 \\0 &z_{2}
\end{smallmatrix}\right).$ Then, using \cite[Equation 2.10]{pal1} we can  write $\mathcal G_{F(\lambda)}\left(\left(\begin{smallmatrix}z_1 & 0 \\0 &z_{2}
\end{smallmatrix}\right)\right)$ as \begin{align}\label{mathcalGG}
		\mathcal G_{F(\lambda)}\left(\left(\begin{smallmatrix}z_1 & 0 \\0 &z_{2}
		\end{smallmatrix}\right)\right) &=F_{11}(\lambda)+\left(\begin{smallmatrix}F_{12}(\lambda)&F_{13}(\lambda)
		\end{smallmatrix}\right)\left(\begin{smallmatrix}z_1 & 0 \\0 &z_{2}
		\end{smallmatrix}\right)\left(I_{2}-\left(\begin{smallmatrix}F_{22}(\lambda)&F_{23}(\lambda)\\F_{32} (\lambda)& F_{33}(\lambda)
		\end{smallmatrix}\right)\left(\begin{smallmatrix}z_1 & 0 \\0 &z_{2}
		\end{smallmatrix}\right)\right)^{-1}\left(\begin{smallmatrix}F_{21}(\lambda)\\F_{31}(\lambda)
		\end{smallmatrix}\right)
\end{align} for all $z_1,z_2\in\mathbb C$ such that $\det(C(z_1,z_2,\lambda))\neq 0,$ where $C(z_1,z_2,\lambda)=\left(I_{2}-\left(\begin{smallmatrix}F_{22} (\lambda)& F_{23}(\lambda) \\F_{32}(\lambda) &F_{33}(\lambda)
	\end{smallmatrix}\right)\left(\begin{smallmatrix}z_1 & 0 \\0 &z_{2}
	\end{smallmatrix}\right)\right).$
From \cite[Equation 2.14 and Equation 2.15]{pal1} we have
\begin{align}\label{gaa}\tilde{\gamma}\left(\left(\begin{smallmatrix}z_1 & 0 \\0 &z_{2}
	\end{smallmatrix}\right), \lambda \right)\nonumber&=\left(I_{2}-\left(\begin{smallmatrix}F_{22} (\lambda)& F_{23}(\lambda) \\F_{32} (\lambda)&F_{33}(\lambda)
	\end{smallmatrix}\right)\left(\begin{smallmatrix}z_1 & 0 \\0 &z_{2}
	\end{smallmatrix}\right)\right)^{-1}\left(\begin{smallmatrix}F_{21} (\lambda)\\ F_{31}(\lambda)
	\end{smallmatrix}\right)\\&=\left(\begin{smallmatrix}\tilde{\gamma}_1(z_1,z_2,\lambda )\\ \tilde{\gamma}_2(z_1,z_2,\lambda)
	\end{smallmatrix}\right)
\end{align} 
and
\begin{align}\label{ett}\tilde{\eta}\left(\left(\begin{smallmatrix}z_1 & 0 \\0 &z_{2}
	\end{smallmatrix}\right),\lambda \right)\nonumber&=\left(\begin{smallmatrix}1\\\left(\begin{smallmatrix}z_1 & 0 \\0 &z_{2}
		\end{smallmatrix}\right)\tilde{\gamma}\left(\left(\begin{smallmatrix}z_1 & 0 \\0 &z_{2}
		\end{smallmatrix}\right),\lambda \right)
	\end{smallmatrix}\right)\\&=\left(\begin{smallmatrix}1\\z_1\tilde{\gamma}_1(z_1,z_2,\lambda) \\ z_2\tilde{\gamma}_2(z_1,z_2,\lambda)
	\end{smallmatrix}\right)
\end{align}
for all $z_1,z_2\in\mathbb C$ such that $\det(C(z_1,z_2,\lambda))\neq 0,$ where $$\tilde{\gamma}_1(z_1,z_2,\lambda)=\frac{(1-F_{33}(\lambda)z_2)F_{21}(\lambda)+z_2F_{23}(\lambda)F_{31}(\lambda)} {\det(C(z_1,z_2,\lambda))}$$ and $$\tilde{\gamma}_2(z_1,z_2,\lambda)=\frac{(1-F_{22}(\lambda)z_1)F_{31}(\lambda)+z_1F_{32}(\lambda)F_{21}(\lambda)} {\det(C(z_1,z_2,\lambda))}.$$
We notice from (\cite[Equation 2.25]{pal1}) that $\mathcal G_{F(\lambda)}\left(\left(\begin{smallmatrix}z_1 & 0 \\0 &z_{2}
\end{smallmatrix}\right)\right) $ is a rational function in $z_1,z_2.$ In this section we use the notation $\mathcal G_{F(\lambda)}(z_1,z_2) $ instead of $\mathcal G_{F(\lambda)}\left(\left(\begin{smallmatrix}z_1 & 0 \\0 &z_{2}
\end{smallmatrix}\right)\right) .$

	\begin{defn}
		The map $SE:\mathcal S_{1}(\mathbb C^3,\mathbb C^3)\to \mathcal S_{3}(\mathbb C,\mathbb C)$ defined as follows: $$SE(F)(z_1,z_2,\lambda)=-\mathcal{G}_{F(\lambda)}(z_1,z_2),~ \lambda \in \mathbb{D},~(z_1,z_2)\in \mathbb{D}^{2}$$	
	\end{defn}
	We show that the map $SE$ is well defined. In order to prove this, it suffices to show that $|\mathcal{G}_{F(\lambda)}(z_1,z_2)|\leq 1,~ {\rm{for ~all}}~\lambda \in \mathbb{D},~(z_1,z_2)\in \mathbb{D}^{2}.$ By characterization of $ \Gamma_{E(2;2;1,1)},$ we deduce that if $\det(C(z_1,z_2,\lambda))\neq 0$ for all $(z_1,z_2)\in \mathbb{D}^{2}$  and for all $\lambda\in \mathbb D$ if and only if $$(F_{22}(\lambda),F_{33}(\lambda), \det A(\lambda))\in \Gamma_{E(2;2;1,1)},$$ where $A(\lambda)=\left(\begin{smallmatrix}F_{22} (\lambda)& F_{23}(\lambda) \\F_{32} (\lambda)&F_{33}(\lambda)
	\end{smallmatrix}\right).$ 
	From [Proposition $2.11$, \cite{pal1}], we get  $\sup_{(z_1,z_2)\in\mathbb D^2}|\mathcal{G}_{F(\lambda)}(z_1,z_2)|\leq 1$ and hence $|\mathcal{G}_{F(\lambda)}(z_1,z_2)|\leq 1$ for all $(z_1,z_2)\in \mathbb D^2.$
	The following proposition is evident from above discussion.
	\begin{prop}
		The map $SE$ is well defined.
	\end{prop}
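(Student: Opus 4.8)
The plan is to verify that the assignment $F\mapsto SE(F)$, with $SE(F)(z_1,z_2,\lambda)=-\mathcal G_{F(\lambda)}(z_1,z_2)$, genuinely lands in $\mathcal S_3(\mathbb C,\mathbb C)=\mathcal O(\mathbb D^3,\mathbb D)$. Two things must be established: first, that $SE(F)$ is a well-defined \emph{analytic} function of the three variables $(z_1,z_2,\lambda)$ on all of $\mathbb D^3$ (in particular that no pole is encountered, i.e. $\det C(z_1,z_2,\lambda)\neq 0$ throughout $\mathbb D^3$); and second, that it is bounded by $1$ in modulus. The preceding discussion already supplies the essential ingredients, so the proof is mainly a matter of assembling them and checking analyticity.

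First I would address the non-vanishing of $\det C(z_1,z_2,\lambda)$. The excerpt records, via the characterization of $\Gamma_{E(2;2;1,1)}$, that $\det(C(z_1,z_2,\lambda))\neq 0$ for all $(z_1,z_2)\in\mathbb D^2$ and all $\lambda\in\mathbb D$ if and only if the triple $(F_{22}(\lambda),F_{33}(\lambda),\det A(\lambda))$ lies in $\Gamma_{E(2;2;1,1)}$, where $A(\lambda)=\bigl(\begin{smallmatrix}F_{22}(\lambda)&F_{23}(\lambda)\\ F_{32}(\lambda)&F_{33}(\lambda)\end{smallmatrix}\bigr)$. Since $F\in\mathcal S_1(\mathbb C^3,\mathbb C^3)$ means $\|F(\lambda)\|\le 1$, the $2\times 2$ submatrix $A(\lambda)$ is a contraction, and hence its symmetrized-bidisc data $(F_{22}(\lambda),F_{33}(\lambda),\det A(\lambda))$ belongs to $\Gamma_{E(2;2;1,1)}$ for every $\lambda\in\mathbb D$. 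This is the step that guarantees the Schur-complement inverse in \eqref{mathcalGG} exists everywhere on $\mathbb D^3$, so $\mathcal G_{F(\lambda)}(z_1,z_2)$ has no poles and, being a rational expression in $z_1,z_2$ with coefficients analytic in $\lambda$ and a non-vanishing denominator, is analytic on $\mathbb D^3$.

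Next I would invoke the boundedness estimate. By [Proposition~2.11, \cite{pal1}] one has $\sup_{(z_1,z_2)\in\mathbb D^2}|\mathcal G_{F(\lambda)}(z_1,z_2)|\le 1$ for each fixed $\lambda\in\mathbb D$, whence $|\mathcal G_{F(\lambda)}(z_1,z_2)|\le 1$ pointwise on $\mathbb D^3$. Because $|SE(F)(z_1,z_2,\lambda)|=|\mathcal G_{F(\lambda)}(z_1,z_2)|$, this gives $\|SE(F)\|_\infty\le 1$, so $SE(F)$ maps $\mathbb D^3$ into $\overline{\mathbb D}$; combined with the analyticity just established and the maximum modulus principle, $SE(F)$ maps into $\mathbb D$ and is therefore a member of $\mathcal O(\mathbb D^3,\mathbb D)=\mathcal S_3(\mathbb C,\mathbb C)$.

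The main obstacle, as I see it, is not the inequality—which is quoted directly—but the care needed to pass from ``$\mathcal G_{F(\lambda)}(z_1,z_2)$ is defined wherever $\det C\neq 0$'' to ``$SE(F)$ is a bona fide analytic function on the full polydisc $\mathbb D^3$.'' This hinges entirely on showing the denominator $\det C(z_1,z_2,\lambda)$ cannot vanish anywhere on $\mathbb D^3$, which is precisely what the $\Gamma_{E(2;2;1,1)}$ membership of $(F_{22}(\lambda),F_{33}(\lambda),\det A(\lambda))$ delivers. Once the domain of definition is pinned down as all of $\mathbb D^3$, well-definedness of the map $SE$ is immediate, completing the proof of the proposition.
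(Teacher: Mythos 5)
Your proof is correct and follows essentially the same route as the paper: first the non-vanishing of $\det C(z_1,z_2,\lambda)$ on all of $\mathbb{D}^3$ via the $\Gamma_{E(2;2;1,1)}$ characterization, then the bound $\sup_{(z_1,z_2)\in\mathbb{D}^2}|\mathcal{G}_{F(\lambda)}(z_1,z_2)|\le 1$ quoted from Proposition 2.11 of \cite{pal1}. If anything, you are slightly more complete than the paper, since you explicitly verify that $\|F(\lambda)\|\le 1$ forces $(F_{22}(\lambda),F_{33}(\lambda),\det A(\lambda))\in\Gamma_{E(2;2;1,1)}$ and you spell out the analyticity of $SE(F)$ on $\mathbb{D}^3$, steps the paper leaves implicit.
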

	
	\begin{rem}
		It follows from  \eqref{mathcalGG} that if either $\left(\begin{array}{cc}
			F_{21}(\lambda)\\
			F_{31}(\lambda)
		\end{array}\right)=\bold{0}$ or $\left(F_{12}(\lambda), F_{13}(\lambda)\right)=\bold{0}$, then the map $$SE(F)(z_1,z_2,\lambda))=-\mathcal{G}_{F(\lambda)}(z_1,z_2)=-F_{11}(\lambda)$$  which is independent of $z_1,z_2.$ In particular, if $z_1=z_2$, then $$SE(F)(z_1,z_1,\lambda)=-\mathcal{G}_{F(\lambda)}(z_1),~ \lambda \in \mathbb{D},~z_1\in \mathbb{D}.$$	For this case $SE$ maps from $\mathcal S_{1}(\mathbb C^3,\mathbb C^3)$ to $\mathcal S_{2}(\mathbb C,\mathbb C).$
	\end{rem} 
	\section{Relation between $\mathcal{S}_{1}(\mathbb C^3,\mathbb C^3)$ and the set of analytic kernels on $\mathbb{D}^{3}$}
	In this section we discuss the relation between  $\mathcal{S}_{1}(\mathbb C^3,\mathbb C^3)$  and the set of analytic kernels on $\mathbb{D}^{3}$. We recall the definition of analytic kernel from \cite{AglerM, aron}. Let $\mathcal F $ be a class of functions  defined on a set  $E,$  which form a real or complex Hilbert function space.  We refer to the function $K(x, y) $ of $x$ and $y $ in $E$ as a reproducing kernel of $\mathcal F$ if it meets the following properties: 
	\begin{enumerate}
		\item   for every $y\in E$, $K(x, y) $ is a function of $x$ belongs to $\mathcal F;$
		\item it satisfies the reproducing property, that is, for every $y\in E$ and for every $f\in\mathcal F$,
		$$\langle f, K(x,y)\rangle=f(y).$$
	\end{enumerate} 
	Let  $N^{(i)}$, for $i=1,2,3$ represent the analytic kernels on $\mathbb{D}^{3}$. For $\textbf{z}=(z_1,z_2)$ and $\textbf{w}=(w_1,w_2)$, assume that $K_{(N^{(1)},N^{(2)},N^{(3)}
		)}$ is the hermitian symmetric function on $\mathbb{D}^{3}\times \mathbb{D}^{3}$ given by 
	\begin{align}\label{kernel}
		K_{(N^{(1)},N^{(2)},N^{(3)})}(\textbf{z},\lambda,\textbf{w},\mu)\nonumber&=1-(1-\bar{w_{1}}z_{1})N^{(1)}(\textbf{z},\lambda,\textbf{w},\mu)-(1-\bar{w_{2}}z_{2})N^{(2)} (\textbf{z},\lambda,\textbf{w},\mu)\\
		&-(1-\bar{\mu}\lambda)N^{(3)}(\textbf{z},\lambda,\textbf{w},\mu).
	\end{align}
	In particular for $w_1=w_2$ and $z_1=z_2$ from \eqref{kernel}, we have 
	\begin{align}\label{kernel1}
		K_{(N^{(1)},N^{(2)},N^{(3)})}(z_1,\lambda,w_1,\mu)\nonumber&=1-(1-\bar{w_{1}}z_{1})(N^{(1)}(z_1,\lambda,w_1,\mu)+N^{(2)} (z_1,\lambda,w_1,\mu))\\
		&-(1-\bar{\mu}\lambda)N^{(3)}(z_1,\lambda,w_1,\mu).
	\end{align}
	
	The set $\tilde{\mathcal{R}_{1}}$ and $\tilde{\mathcal{S}_{1}}$ are defined as follows: $$\begin{aligned}
		\tilde{\mathcal{R}_{1}}:=&\{(N^{(1)},N^{(2)},N^{(3)}):N^{(1)},N^{(2)},N^{(3)},K_{(N^{(1)},N^{(2)},N^{(3)})}~\text{are the analytic kernel on }\\ &\mathbb{D}^{3} ~\text{and} \,\,K_{(N^{(1)},N^{(2)},N^{(3)})}\,\, \text{is of rank 1}\}.
	\end{aligned}$$	
	
	and $$\begin{aligned}
		\tilde{\mathcal{S}_{1}}:=&\{(N^{(1)}+N^{(2)},N^{(3)}):N^{(1)},N^{(2)},N^{(3)},K_{(N^{(1)},N^{(2)},N^{(3)})}~\text{are the analytic kernel on }\\ &\mathbb{D}^{2} ~\text{and} \,\,K_{(N^{(1)},N^{(2)},N^{(3)})}\,\, \text{is of rank 1}\}
	\end{aligned}$$	 respectively.

	\subsection{The map Upper~$E$}
	Note that for every $F=[F_{ij}]_{i,j=1}^{3}\in \mathcal{S}_{1}(\mathbb C^3,\mathbb C^3)$  from \eqref{gaa} and \eqref{ett}, we have $$\tilde{\gamma}(\lambda,\textbf{z})=\left(\begin{array}{cc}
		\tilde{\gamma}_{1}(\lambda,\textbf{z})\\
		\tilde{\gamma}_{2}(\lambda,\textbf{z})
	\end{array}\right)~{\rm{and}}~\tilde{\eta}(\lambda,\textbf{z})=\left(\begin{array}{ccc}
		1\\
		z_{1}\tilde{\gamma}_{1}(\lambda,\textbf{z})\\
		z_{2}\tilde{\gamma}_{2}(\lambda,\textbf{z})
	\end{array}\right).$$ 
	The functions $N^{(i)}_{F}$  for $i=1,2,3$ on $\mathbb{D}^{3}\times\mathbb{D}^{3}$ are described below
	\begin{equation}\label{eq8}
		N_{F}^{(i)}(\lambda,\textbf{z},\mu,\textbf{w})=\overline{\tilde{\gamma}_{i}(\mu,\textbf{w})}\tilde{\gamma}_{i}(\lambda,\textbf{z}), \,\, i=1,2
	\end{equation} and
	\begin{equation}\label{eq9}
		N^{(3)}_{F}(\lambda,\textbf{z},\mu,\textbf{w})=\tilde{\eta}(\mu,\textbf{w})^{*}\frac{I-F(\mu)^{*}F(\lambda)}{1-\bar{\mu}\lambda}\tilde{\eta}(\lambda,\textbf{z})
	\end{equation} for all $\lambda,\mu \in \mathbb{D}\,\, \text{and} \,\,\textbf{z},\textbf{w}\in \mathbb{D}^{2}$. As $\det(C(z_1,z_2,\lambda)\neq 0$, the functions $N^{(i)}_{F}$ for $i=1,2,3,$ are well-defined.
	\begin{prop}\label{upper}
		
		Suppose that $F \in  \mathcal{S}_{1}(\mathbb C^3,\mathbb C^3)$ with $\left(\begin{array}{cc}
			F_{21}(\lambda)\\
			F_{31}(\lambda)
		\end{array}\right)\neq \bold{0}.$ Then the maps $N^{(i)}_{F}$ for $i=1,2,3,$ are the analytic kernels on $\mathbb{D}^{3}$, $N^{(i)}_{F},~i=1,2,$ are of rank 1 and $(N_{F}^{(1)},N_{F}^{(2)},N_{F}^{(3)})\in \mathcal{\tilde{R}}_{1}$.
	\end{prop}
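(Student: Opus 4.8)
The plan is to verify the three assertions of Proposition~\ref{upper} in turn: that each $N_F^{(i)}$ is an analytic kernel on $\mathbb{D}^3$, that $N_F^{(1)}$ and $N_F^{(2)}$ have rank one, and finally that the triple lies in $\tilde{\mathcal R}_1$, which by the definition of that set amounts to checking in addition that the assembled kernel $K_{(N_F^{(1)},N_F^{(2)},N_F^{(3)})}$ of \eqref{kernel} is itself an analytic kernel of rank~$1$.

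First I would treat $N_F^{(1)}$ and $N_F^{(2)}$. From \eqref{eq8} each has the form $N_F^{(i)}(\lambda,\textbf{z},\mu,\textbf{w})=\overline{\tilde\gamma_i(\mu,\textbf{w})}\,\tilde\gamma_i(\lambda,\textbf{z})$, i.e. it is the outer product of the single function $\tilde\gamma_i$ with itself. A hermitian function of the shape $\overline{g(\mu,\textbf{w})}\,g(\lambda,\textbf{z})$ is automatically positive semi-definite, and its Gram matrix on any finite point set has rank at most one; since $\tilde\gamma_i$ is not identically zero under the hypothesis $\bigl(\begin{smallmatrix}F_{21}\\F_{31}\end{smallmatrix}\bigr)\neq\mathbf 0$, the rank is exactly one. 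Analyticity follows because $\tilde\gamma_i$ is, by the explicit formulas for $\tilde\gamma_1,\tilde\gamma_2$ recorded after \eqref{ett}, a rational and hence holomorphic function of $(\textbf{z},\lambda)$ wherever $\det C(z_1,z_2,\lambda)\neq 0$, which holds throughout $\mathbb{D}^3$ by the $\Gamma_{E(2;2;1,1)}$ characterisation invoked in Section~2. This disposes of the rank-one claims and the analyticity of the first two kernels.

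Next I would handle $N_F^{(3)}$ from \eqref{eq9}. Here the key input is that $F\in\mathcal S_1(\mathbb C^3,\mathbb C^3)$ is a contractive analytic matrix function, so the Agler/de~Branges--Rovnyak kernel $\dfrac{I-F(\mu)^*F(\lambda)}{1-\bar\mu\lambda}$ is a positive semi-definite (operator-valued) kernel on $\mathbb{D}$, and it is analytic in $\lambda$, conjugate-analytic in $\mu$. Compressing it by the vector $\tilde\eta(\lambda,\textbf{z})$ on the right and $\tilde\eta(\mu,\textbf{w})^*$ on the left preserves positivity, and since $\tilde\eta$ is analytic in $(\textbf{z},\lambda)$ (it is built from $1$ and $z_i\tilde\gamma_i$ by \eqref{ett}), the compression $N_F^{(3)}$ is again an analytic positive kernel on $\mathbb{D}^3$. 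I would not claim a rank bound here, matching the statement.

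The main obstacle is the final membership in $\tilde{\mathcal R}_1$, i.e. showing $K_{(N_F^{(1)},N_F^{(2)},N_F^{(3)})}$ is an analytic kernel of rank~$1$. I would substitute \eqref{eq8} and \eqref{eq9} into the defining expression \eqref{kernel} and carry out the algebraic simplification: the factors $(1-\bar w_1 z_1)$, $(1-\bar w_2 z_2)$ multiplying $|\tilde\gamma_i|^2$-type terms, together with the $(1-\bar\mu\lambda)N_F^{(3)}$ term, should telescope so that the $(1-\bar\mu\lambda)$ cancels against the denominator in \eqref{eq9} and the $\tilde\eta$-structure $(1,z_1\tilde\gamma_1,z_2\tilde\gamma_2)$ recombines. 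The expected outcome is that $K$ collapses to a single outer product $\overline{h(\mu,\textbf{w})}\,h(\lambda,\textbf{z})$ — the natural candidate being built from the $(1,1)$-transfer-function data, so that $h$ relates to $1-\overline{\mathcal G_{F(\mu)}(\textbf{w})}$ or the Schur-complement realisation of $\mathcal G_{F(\lambda)}$ from \eqref{mathcalGG}. Producing exactly this rank-one form is the crux; once $K=\overline h\,h$ is established, positivity, analyticity, and rank~$1$ all follow as in the first step, completing the proof. I would guide this computation by the Agler realisation identity underlying \cite[Equation 2.10]{pal1}, which is precisely the algebraic identity that makes the cancellation work.
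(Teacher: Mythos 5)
Your proposal is correct and follows essentially the same route as the paper: the rank-one outer-product structure of $N_F^{(1)},N_F^{(2)}$ from the nonvanishing of $\tilde{\gamma}_i$ under the hypothesis on $\left(\begin{smallmatrix}F_{21}\\F_{31}\end{smallmatrix}\right)$, analyticity/positivity of $N_F^{(3)}$ from the Schur-class property of $F$, and the collapse of $K_{(N_F^{(1)},N_F^{(2)},N_F^{(3)})}$ to the rank-one kernel $\overline{\mathcal G_{F(\mu)}(\textbf{w})}\,\mathcal G_{F(\lambda)}(\textbf{z})$. The telescoping identity you single out as the crux is not rederived in the paper either: it is quoted directly as \cite[Proposition 2.11]{pal1}, which states precisely that $1-\overline{\mathcal G_{F(\mu)}(\textbf{w})}\,\mathcal G_{F(\lambda)}(\textbf{z})$ equals the Agler-type combination of the three kernels, whence $K$ equals $\overline{\mathcal G_{F(\mu)}(\textbf{w})}\,\mathcal G_{F(\lambda)}(\textbf{z})$ exactly as you predicted.
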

	\begin{proof}
		By definition of $SE$, the maps $N^{(i)}_{F}$ for $i=1,2,3,$ are analytic on $\mathbb{D}^{3}$.  As $\left(\begin{array}{cc}
			F_{21}(\lambda)\\
			F_{31}(\lambda)
		\end{array}\right)\neq \bold{0}$, it follows from  \eqref{eq8} that $\tilde{\gamma}_i:\mathbb D^3\to \mathbb C$ for $i=1,2$ is not equal to zero and hence $N_{F}^{(i)},i=1,2$ are analytic kernels on $\mathbb{D}^{3}$ of rank $1. $
		
		We now wish to demonstrate that $(N_{F}^{(1)},N_{F}^{(2)},N_{F}^{(3)})\in \mathcal{\tilde{R}}_{1}$. In order to see this, it is sufficient to prove that $K_{(N_F^{(1)},N_F^{(2)},N_F^{(3)})}$ is an analytic kernel on $\mathbb{D}^{3}$ of rank $1.$
		For all $\lambda,\mu \in \mathbb{D}\,\, \text{and} \,\,\textbf{z},\textbf{w}\in \mathbb{D}^{2}$, it yields from \cite[Proposition 2.11]{pal1} that
		\begin{align}\label{eq11}
			1-\overline{\mathcal G_{F(\mu)}(\textbf{w})}\mathcal G_{F(\lambda)}(\textbf{z})\nonumber&=(1-\bar{w_{1}}z_{1})N_F^{(1)}(\textbf{z},\lambda,\textbf{w},\mu)-(1-\bar{w_{2}}z_{2})N_F^{(2)}(\textbf{z},\lambda,\textbf{w},\mu)\\&-(1-\bar{\mu}\lambda)N_F^{(3)}(\textbf{z},\lambda,\textbf{w},\mu).
		\end{align}
		We observe from \eqref{kernel} and \eqref{eq11} that
		$$K_{(N_F^{(1)},N_F^{(2)},N_F^{(3)})}(\textbf{z},\lambda,\textbf{w},\mu)=\overline{\mathcal G_{F(\mu)}(\textbf{w})}\mathcal G_{F(\lambda)}(\textbf{z}).$$ This shows that $K_{(N_F^{(1)},N_F^{(2)},N_F^{(3)})}$ 
		is a kernel with rank $1$ and hence  $(N_{F}^{(1)},N_{F}^{(2)},N_{F}^{(3)})\in \mathcal{\tilde{R}}_{1}$.
		
	\end{proof}	
	The proof of the following proposition is same as Proposition \eqref{upper} and hence we omit the proof.
	\begin{prop}\label{upper11}
		
		Let $F \in  \mathcal{S}_{1}(\mathbb C^3,\mathbb C^3)$ with $\left(\begin{array}{cc}
			F_{21}(\lambda)\\
			F_{31}(\lambda)
		\end{array}\right)\neq \bold{0}.$ Then the maps $N^{(i)}_{F}$ for \\$i=1,2,3,$ are the analytic kernels on $\mathbb{D}^{2}$, $N^{(i)}_{F},~i=1,2,$ are of rank 1 and $(N_{F}^{(1)}+N_{F}^{(2)},N_{F}^{(3)})\in \mathcal{\tilde{S}}_{1}$.
	\end{prop}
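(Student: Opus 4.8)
The plan is to deduce Proposition~\ref{upper11} from Proposition~\ref{upper} by specializing to the diagonal $z_1=z_2$, $w_1=w_2$, which is exactly the passage from functions on $\mathbb D^3$ to functions on $\mathbb D^2$ recorded in the passage from \eqref{kernel} to \eqref{kernel1}. First I would invoke Proposition~\ref{upper}: under the hypothesis $\left(\begin{smallmatrix}F_{21}(\lambda)\\ F_{31}(\lambda)\end{smallmatrix}\right)\neq\mathbf{0}$, the functions $N_F^{(i)}$, $i=1,2,3$, are analytic kernels on $\mathbb D^3$ with $N_F^{(1)},N_F^{(2)}$ of rank $1$. Setting $z_1=z_2=:z$ and $w_1=w_2=:w$ restricts each $N_F^{(i)}$ to a function of $(z,\lambda)$ and $(w,\mu)$ on $\mathbb D^2$. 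Since both analyticity and positive semi-definiteness are preserved when a kernel is restricted to a subset of its base set, each restricted $N_F^{(i)}$ is again an analytic kernel, now on $\mathbb D^2$.

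For the rank-$1$ assertion I would use \eqref{eq8}: because $N_F^{(i)}(\lambda,\mathbf z,\mu,\mathbf w)=\overline{\tilde\gamma_i(\mu,\mathbf w)}\,\tilde\gamma_i(\lambda,\mathbf z)$ for $i=1,2$, its diagonal restriction is still a product of a holomorphic function with its conjugate, hence a kernel of rank at most $1$; and the hypothesis $\left(\begin{smallmatrix}F_{21}(\lambda)\\ F_{31}(\lambda)\end{smallmatrix}\right)\neq\mathbf{0}$ forces $\tilde\gamma_1,\tilde\gamma_2$ to be not identically zero (exactly as in Proposition~\ref{upper}), so the rank is precisely $1$. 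To place $(N_F^{(1)}+N_F^{(2)},N_F^{(3)})$ in $\mathcal{\tilde S}_{1}$ it then suffices to show that $K_{(N_F^{(1)},N_F^{(2)},N_F^{(3)})}$, in its $\mathbb D^2$ form \eqref{kernel1}, is a rank-$1$ analytic kernel. Restricting the identity \eqref{eq11} of Proposition~\ref{upper} to the diagonal and comparing with \eqref{kernel1} yields
$$K_{(N_F^{(1)},N_F^{(2)},N_F^{(3)})}(z,\lambda,w,\mu)=\overline{\mathcal G_{F(\mu)}(w)}\,\mathcal G_{F(\lambda)}(z),$$
which is manifestly a rank-$1$ kernel on $\mathbb D^2$, giving $(N_F^{(1)}+N_F^{(2)},N_F^{(3)})\in\mathcal{\tilde S}_{1}$.

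The only point requiring genuine care, as opposed to verbatim transcription of Proposition~\ref{upper}, is the bookkeeping in this last step: one must check that the diagonal restriction of \eqref{eq11} indeed collapses into the shape \eqref{kernel1}. This works precisely because the two coefficients $(1-\bar w_1 z_1)$ and $(1-\bar w_2 z_2)$ coincide when $z_1=z_2$ and $w_1=w_2$, so that $N_F^{(1)}$ and $N_F^{(2)}$ enter only through the common combination $(1-\bar w z)(N_F^{(1)}+N_F^{(2)})$; this is exactly what makes the \emph{pair} $(N_F^{(1)}+N_F^{(2)},N_F^{(3)})$, rather than the triple, the correct object in $\mathcal{\tilde S}_{1}$. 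No new analytic input beyond Proposition~\ref{upper} is needed, which is why the authors omit the details.
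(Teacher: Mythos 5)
Your proposal is correct and matches the paper's intent: the paper omits this proof outright, declaring it ``the same as Proposition \ref{upper}'', and your diagonal specialization $z_1=z_2$, $w_1=w_2$ is exactly the bookkeeping that turns the $\mathbb{D}^3$ statement into the $\mathbb{D}^2$ one, including the key observation that the coefficients $(1-\bar{w}_1z_1)$ and $(1-\bar{w}_2z_2)$ merge so that only the sum $N_F^{(1)}+N_F^{(2)}$ survives, which is why $\tilde{\mathcal{S}}_1$ consists of pairs. The only point you treat slightly glibly --- that rank $1$ is preserved under restriction (in general a rank-$1$ kernel can collapse to rank $0$ on a subset) --- does hold here, since the numerators of $\tilde{\gamma}_1$ and $\tilde{\gamma}_2$ depend only on $(z_2,\lambda)$ and $(z_1,\lambda)$ respectively, so their non-vanishing is unaffected by setting $z_1=z_2$.
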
	
	\begin{prop}\label{upper1}
		Let $F \in  \mathcal{S}_{1}(\mathbb C^3,\mathbb C^3)$ with $\left(\begin{array}{cc}
			F_{21}(\lambda)\\
			F_{31}(\lambda)
		\end{array}\right)= \bold{0}.$  Then the maps $N_{F}^{(i)},i=1,2,3,$ \\are the analytic kernels on $\mathbb{D}^{3}$, $N_{F}^{(i)}, i=1,2$ is of rank 0 and $(N_{F}^{(1)},N_{F}^{(2)},N_{F}^{(3)})\in \mathcal{\tilde{R}}_{1}$. Furthermore, $$N_{F}^{(i)}(\lambda,\textbf{z},\mu,\textbf{w})=0\,\, \text{for}\,\,i=1,2, \,\,N_{F}^{(3)}(\lambda,\textbf{z},\mu,\textbf{w})=\frac{1-F_{11}(\mu)^{*}F_{11}(\lambda)}{1-\bar{\mu}\lambda}$$ and $$K_{(N_{F}^{(1)},N_{F}^{(2)},N_F^{(3)})}(\textbf{z},\lambda,\textbf{w},\mu)=F_{11}(\mu)^{*}F_{11}(\lambda) ~{\rm{for~ all~}}\lambda,\mu \in \mathbb{D}\,\, \text{and} \,\,\textbf{z},\textbf{w}\in \mathbb{D}^{2}.$$
	\end{prop}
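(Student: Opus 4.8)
The plan is to substitute the hypothesis $\bigl(\begin{smallmatrix} F_{21}(\lambda) \\ F_{31}(\lambda)\end{smallmatrix}\bigr)=\mathbf{0}$ directly into the explicit formulas already recorded in Section~2, reducing everything to bookkeeping on the $(1,1)$ entry of the Schur kernel of $F$. First I would feed $F_{21}(\lambda)=F_{31}(\lambda)=0$ into the closed forms
$$\tilde{\gamma}_1(z_1,z_2,\lambda)=\frac{(1-F_{33}(\lambda)z_2)F_{21}(\lambda)+z_2F_{23}(\lambda)F_{31}(\lambda)}{\det(C(z_1,z_2,\lambda))},\quad \tilde{\gamma}_2(z_1,z_2,\lambda)=\frac{(1-F_{22}(\lambda)z_1)F_{31}(\lambda)+z_1F_{32}(\lambda)F_{21}(\lambda)}{\det(C(z_1,z_2,\lambda))}$$
and observe that both numerators vanish identically, so $\tilde{\gamma}_1\equiv\tilde{\gamma}_2\equiv 0$ on $\mathbb{D}^3$. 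By the definition \eqref{eq8} this forces $N_F^{(i)}(\lambda,\textbf{z},\mu,\textbf{w})=\overline{\tilde{\gamma}_i(\mu,\textbf{w})}\,\tilde{\gamma}_i(\lambda,\textbf{z})=0$ for $i=1,2$; the zero function is trivially an analytic kernel and has rank $0$, which settles the first claim together with the first displayed formula.

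Next I would compute $N_F^{(3)}$. Since $\tilde{\gamma}_1=\tilde{\gamma}_2=0$, formula \eqref{ett} collapses to $\tilde{\eta}(\lambda,\textbf{z})=e_1=\bigl(\begin{smallmatrix}1\\0\\0\end{smallmatrix}\bigr)$, and likewise $\tilde{\eta}(\mu,\textbf{w})=e_1$. Substituting into \eqref{eq9} then merely extracts the $(1,1)$ entry:
$$N^{(3)}_{F}(\lambda,\textbf{z},\mu,\textbf{w})=e_1^{*}\,\frac{I-F(\mu)^{*}F(\lambda)}{1-\bar{\mu}\lambda}\,e_1=\frac{1-\bigl[F(\mu)^{*}F(\lambda)\bigr]_{11}}{1-\bar{\mu}\lambda}.$$
Here $\bigl[F(\mu)^{*}F(\lambda)\bigr]_{11}=\sum_{k=1}^{3}\overline{F_{k1}(\mu)}F_{k1}(\lambda)$, and the hypothesis $F_{21}=F_{31}=0$ annihilates the $k=2,3$ terms, leaving $F_{11}(\mu)^{*}F_{11}(\lambda)$, which is exactly the stated expression. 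To certify that $N_F^{(3)}$ is a genuine (positive) analytic kernel, I would note that $\|F(\lambda)\|\le 1$ puts the scalar entry $F_{11}$ in the disc Schur class, so $\frac{1-F_{11}(\mu)^{*}F_{11}(\lambda)}{1-\bar{\mu}\lambda}$ is the familiar Szeg\H{o}--de Branges--Rovnyak kernel, positive semidefinite and analytic in $(\lambda,\textbf{z})$, anti-analytic in $(\mu,\textbf{w})$.

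Finally, to identify $K_{(N_F^{(1)},N_F^{(2)},N_F^{(3)})}$ and deduce membership in $\mathcal{\tilde{R}}_{1}$, I would insert $N_F^{(1)}=N_F^{(2)}=0$ and the computed $N_F^{(3)}$ into \eqref{kernel}:
$$K_{(N_{F}^{(1)},N_{F}^{(2)},N_F^{(3)})}(\textbf{z},\lambda,\textbf{w},\mu)=1-(1-\bar{\mu}\lambda)\,\frac{1-F_{11}(\mu)^{*}F_{11}(\lambda)}{1-\bar{\mu}\lambda}=F_{11}(\mu)^{*}F_{11}(\lambda),$$
which is of the form $\overline{g(\mu)}\,g(\lambda)$ with $g=F_{11}$ and hence a rank-one analytic kernel (rank exactly one once $F_{11}\not\equiv 0$, the only nondegenerate case). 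Since $N_{F}^{(1)},N_{F}^{(2)},N_{F}^{(3)}$ and $K_{(N_{F}^{(1)},N_{F}^{(2)},N_F^{(3)})}$ are all analytic kernels on $\mathbb{D}^{3}$ with $K$ of rank one, the triple lies in $\mathcal{\tilde{R}}_{1}$, finishing the proof. There is no serious obstacle: the entire argument is substitution into formulas already established, and the only points deserving a sentence of justification are the positivity and analytic-kernel status of $N_F^{(3)}$ and $K$, both of which follow from $F_{11}$ being a scalar Schur function.
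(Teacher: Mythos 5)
Your proof is correct and takes essentially the same route as the paper's: substitute $F_{21}(\lambda)=F_{31}(\lambda)=0$ into the closed forms of $\tilde{\gamma}_1,\tilde{\gamma}_2$ to get $N_F^{(1)}=N_F^{(2)}=0$, observe that $\tilde{\eta}$ collapses to $(1,0,0)^{T}$ so that $N_F^{(3)}$ becomes the Szeg\H{o}-type kernel in $F_{11}$, and then read off $K_{(N_F^{(1)},N_F^{(2)},N_F^{(3)})}=\overline{F_{11}(\mu)}F_{11}(\lambda)$ from \eqref{kernel}. Your explicit extraction of the $(1,1)$ entry of $F(\mu)^{*}F(\lambda)$ and the positivity remark for $N_F^{(3)}$ merely spell out steps the paper leaves implicit, and your parenthetical caveat that $K$ has rank one only when $F_{11}\not\equiv 0$ flags a degenerate case that the paper's own proof also glosses over.
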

	\begin{proof}
		For every $F \in  \mathcal{S}_{1}(\mathbb C^3,\mathbb C^3)$ with $\left(\begin{array}{cc}
			F_{21}(\lambda)\\
			F_{31}(\lambda)
		\end{array}\right)= \bold{0},$ from \eqref{gaa} and \eqref{ett} we have
		$$\tilde{\gamma}(\lambda,\textbf{z})=\left(\begin{array}{cc}
			0 \\
			0
		\end{array}\right)~\text{and}~\tilde{\eta}(\lambda,\textbf{z})=\left(\begin{array}{ccc}
			1 \\
			0 \\
			0
		\end{array}\right)~{\rm{for ~all }}\lambda \in \mathbb{D},\textbf{z}\in \mathbb{D}^{2}.$$ Hence by definition of $N_{F}^{(i)}(\lambda,\textbf{z},\mu,\textbf{w})$, it implies that $N_{F}^{(i)}(\lambda,\textbf{z},\mu,\textbf{w})=0,\,\,i=1,2.$  Clearly, for all $\lambda,\mu \in \mathbb{D}\,\, \text{and} \,\,\textbf{z},\textbf{w}\in \mathbb{D}^{2}$, $N_{F}^{(i)}(\lambda,\textbf{z},\mu,\textbf{w})$ for $i=1,2,$ are analytic kernel on $\mathbb{D}^3$ with rank 0. Note that
		$$N_{F}^{(3)}(\lambda,\textbf{z},\mu,\textbf{w})=\frac{1-F_{11}(\mu)^{*}F_{11}(\lambda)}{1-\bar{\mu}\lambda}$$ which indicates that it is independent of $\textbf{z},\textbf{w}\in \mathbb D^2$ and hence  $N_F^{(3)}$ is an analytic kernel on $\mathbb{D}^{3}$.  We observe that
		$$K_{(N_{F}^{(1)},N_{F}^{(2)},N_F^{(3)})}(\textbf{z},\lambda,\textbf{w},\mu)=1-(1-\bar{\mu}\lambda)N_{F}^{(3)}(\lambda,\textbf{z},\mu,\textbf{w})=\overline{F_{11}(\mu)}F_{11}(\lambda)$$ which is again independent of $\textbf{z},\textbf{w}\in \mathbb D^2. $ This shows that $K_{(N_{F}^{(1)},N_{F}^{(2)},N_{F}^{(3)})}$ is an analytic kernel of rank $1$ on $\mathbb{D}^3$ and hence $(N_{F}^{(1)},N_{F}^{(2)},N_{F}^{(3)})\in \mathcal{\tilde{R}}_{1}$. 
	\end{proof}
	The proof of the following 	proposition is same as the Proposition \eqref{upper1}. Therefore, we skip the proof.
	\begin{prop}\label{upper12}
		Assume that $F \in  \mathcal{S}_{1}(\mathbb C^3,\mathbb C^3)$ with $\left(\begin{array}{cc}
			F_{21}(\lambda)\\
			F_{31}(\lambda)
		\end{array}\right)= \bold{0}.$  Then the maps $N_{F}^{(i)},i=1,2,3,$ are the analytic kernels on $\mathbb{D}^{2}$, $N_{F}^{(i)}, i=1,2$ is of rank 0 and $(N_{F}^{(1)}+N_{F}^{(2)},N_{F}^{(3)})\in \mathcal{\tilde{S}}_{1}$. Furthermore, $$N_{F}^{(i)}(\lambda,z,\mu,w)=0\,\, \text{for}\,\,i=1,2, \,\,N_{F}^{(3)}(\lambda,z,\mu,w)=\frac{1-F_{11}(\mu)^{*}F_{11}(\lambda)}{1-\bar{\mu}\lambda}$$ and $$K_{(N_{F}^{(1)},N_{F}^{(2)},N_F^{(3)})}(z,\lambda,w,\mu)=F_{11}(\mu)^{*}F_{11}(\lambda) ~{\rm{for~ all~}}\lambda,\mu \in \mathbb{D}\,\, \text{and} \,\,z,w\in \mathbb{D}.$$
	\end{prop}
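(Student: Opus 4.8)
The plan is to follow the proof of Proposition~\ref{upper1} verbatim, the sole change being the restriction to the diagonal $z_1=z_2=z$, $w_1=w_2=w$ (so that the kernels live on $\mathbb D^2$ rather than $\mathbb D^3$) together with the use of the combined-kernel identity \eqref{kernel1} in place of \eqref{kernel}. First I would insert the hypothesis $\left(\begin{smallmatrix}F_{21}(\lambda)\\F_{31}(\lambda)\end{smallmatrix}\right)=\mathbf 0$ into \eqref{gaa}: since $\tilde\gamma$ is there obtained by applying the matrix inverse $\left(I_2-\left(\begin{smallmatrix}F_{22}(\lambda)&F_{23}(\lambda)\\F_{32}(\lambda)&F_{33}(\lambda)\end{smallmatrix}\right)\left(\begin{smallmatrix}z_1&0\\0&z_2\end{smallmatrix}\right)\right)^{-1}$ to the column $\left(\begin{smallmatrix}F_{21}(\lambda)\\F_{31}(\lambda)\end{smallmatrix}\right)$, the vanishing of that column gives $\tilde\gamma_1\equiv\tilde\gamma_2\equiv 0$, and then \eqref{ett} yields $\tilde\eta(\lambda,\mathbf z)=(1,0,0)^{T}$ for all $\lambda\in\mathbb D$, $\mathbf z\in\mathbb D^2$.

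From \eqref{eq8}, $\tilde\gamma_i\equiv 0$ forces $N_F^{(i)}(\lambda,\mathbf z,\mu,\mathbf w)=\overline{\tilde\gamma_i(\mu,\mathbf w)}\,\tilde\gamma_i(\lambda,\mathbf z)=0$ for $i=1,2$; restricting to the diagonal keeps them identically zero, so on $\mathbb D^2$ they are analytic kernels of rank $0$. For $N_F^{(3)}$ I would substitute $\tilde\eta=e_1:=(1,0,0)^{T}$ into \eqref{eq9}; the quadratic form $e_1^{*}Me_1$ extracts the $(1,1)$-entry of $M=\frac{I-F(\mu)^{*}F(\lambda)}{1-\bar\mu\lambda}$, and since $F_{21}\equiv F_{31}\equiv 0$ the $(1,1)$-entry of $F(\mu)^{*}F(\lambda)$ reduces to $\overline{F_{11}(\mu)}F_{11}(\lambda)$. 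Hence
$$N_F^{(3)}(\lambda,z,\mu,w)=\frac{1-F_{11}(\mu)^{*}F_{11}(\lambda)}{1-\bar\mu\lambda},$$
which is independent of $z,w$. As $\|F\|\le 1$ gives $|F_{11}|\le 1$, this is the de~Branges--Rovnyak kernel of the scalar Schur function $F_{11}$, hence a positive analytic kernel on $\mathbb D^2$.

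Finally I would read off the combined kernel from \eqref{kernel1}. Because $N_F^{(1)}$ and $N_F^{(2)}$ vanish, the term $(1-\bar w z)(N^{(1)}+N^{(2)})$ disappears and
$$K_{(N_F^{(1)},N_F^{(2)},N_F^{(3)})}(z,\lambda,w,\mu)=1-(1-\bar\mu\lambda)N_F^{(3)}(\lambda,z,\mu,w)=\overline{F_{11}(\mu)}F_{11}(\lambda),$$
a product of a co-analytic and an analytic factor, hence an analytic kernel of rank $1$ on $\mathbb D^2$. Thus $N_F^{(1)},N_F^{(2)},N_F^{(3)}$ and $K_{(N_F^{(1)},N_F^{(2)},N_F^{(3)})}$ are all analytic kernels on $\mathbb D^2$ with the last of rank $1$, which is exactly the membership $(N_F^{(1)}+N_F^{(2)},N_F^{(3)})\in\mathcal{\tilde{S}}_1$.

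I expect no real obstacle: the argument is routine and structurally identical to Proposition~\ref{upper1}. The only point deserving care is that the degeneracy $N_F^{(1)}=N_F^{(2)}=0$ makes the passage from \eqref{kernel} to the diagonal identity \eqref{kernel1} entirely transparent, since the very term in which the two formulas differ --- the aggregate $N^{(1)}+N^{(2)}$ --- contributes nothing here. A minor caveat, matching the convention of Proposition~\ref{upper1}, is that $\overline{F_{11}(\mu)}F_{11}(\lambda)$ has rank exactly $1$ precisely when $F_{11}\not\equiv 0$.
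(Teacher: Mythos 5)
Your proposal is correct and takes essentially the same approach as the paper: the paper itself disposes of this proposition by declaring its proof identical to that of Proposition \ref{upper1}, namely substituting the hypothesis into \eqref{gaa} and \eqref{ett} to get $\tilde\gamma\equiv 0$ and $\tilde\eta\equiv(1,0,0)^{T}$, whence $N_F^{(1)}=N_F^{(2)}=0$, $N_F^{(3)}$ is the de Branges--Rovnyak kernel of $F_{11}$, and $K=\overline{F_{11}(\mu)}F_{11}(\lambda)$, exactly as you argue on the diagonal $z_1=z_2$, $w_1=w_2$. Your extra details (extracting the $(1,1)$-entry of $F(\mu)^{*}F(\lambda)$ using $F_{21}\equiv F_{31}\equiv 0$, and the caveat that the rank-$1$ claim for $K$ requires $F_{11}\not\equiv 0$) only make the argument more explicit than the paper's.
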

	
	\begin{defn}
		For every	$F \in  \mathcal{S}_{1}(\mathbb C^3,\mathbb C^3),$ the upper map $Upper E:\mathcal{S}_{1}(\mathbb C^3,\mathbb C^3)\rightarrow\tilde{\mathcal{R}_{1}}$ is given by $$Upper E(F)=(N_{F}^{(1)},N_{F}^{(2)},N_{F}^{(3)}).$$
	\end{defn}
	Note that from Proposition \eqref{upper} and \eqref{upper1} the  map Upper$E$ is well defined.
	\begin{defn}
		For every	$F \in  \mathcal{S}_{1}(\mathbb C^3,\mathbb C^3),$ the upper map $Upper E:\mathcal{S}_{1}(\mathbb C^3,\mathbb C^3)\rightarrow\tilde{\mathcal{S}_{1}}$ is given by $$Upper E(F)=(N_{F}^{(1)}+N_{F}^{(2)},N_{F}^{(3)}).$$
	\end{defn}
	Note that from Proposition \eqref{upper11} and \eqref{upper12} the  map Upper$E$ is well defined.
	\section{$UW$ procedure via the set valued maps Upper$W:\mathcal{\tilde{R}}_{11}\to \mathcal S_{1}(\mathbb C^3,\mathbb C^3)$ and Upper$W:\mathcal{\tilde{S}}_{11}\to \mathcal S_{1}(\mathbb C^3,\mathbb C^3)$}
	In this section we describe $UW$ procedure. This procedure  ensures the well definedness of the maps Upper$W:\mathcal{\tilde{R}}_{11}\to \mathcal S_{1}(\mathbb C^3,\mathbb C^3)$  and Upper$W:\mathcal{\tilde{S}}_{11}\to \mathcal S_{1}(\mathbb C^3,\mathbb C^3),$ where  $$\begin{aligned}
		\tilde{\mathcal{R}}_{11}=&\{(N^{(1)},N^{(2)},N^{(3)})~\text{are the analytic kernel on }\mathbb{D}^{3}\\&\,\, \text{and} \,\,N^{(1)},N^{(2)},K_{(N^{(1)},N^{(2)},N^{(3)})}\,\, \text{are of rank 1}\}\subseteq \tilde{\mathcal{R}_1}
	\end{aligned}$$	
	and 
	$$\begin{aligned}
		\tilde{\mathcal{S}}_{11}=&\{(N^{(1)}+N^{(2)},N^{(3)})~\text{are the analytic kernel on }\mathbb{D}^{2}\\&\,\, \text{and} \,\,N^{(1)},N^{(2)},K_{(N^{(1)},N^{(2)},N^{(3)})}\,\, \text{are of rank 1}\}\subseteq \tilde{\mathcal{S}_1}
	\end{aligned}$$	
	It is well-known \cite{AglerM} that every Hilbert function space on a set $X$ produces a kernel $K$ on a set $X$ and for each kernel $K$, one can construct a Hilbert function space $\mathcal H_{K}$ that has $K$ as its reproducing kernel.
	Suppose $F \in \mathcal S_{1}(\mathbb C^3,\mathbb C^3)$  with $\left(\begin{array}{cc}
		F_{21}(\lambda)\\
		F_{31}(\lambda)
	\end{array}\right)\neq \bold{0}.$  Then from Proposition \eqref{kernel}, we observe that the kernels $N_F^{(i)},i=1,2$ have rank one. In this case we conclude that the Upper$E$ maps into a subset $\mathcal{\tilde{R}}_{11}$ of $\mathcal{\tilde{R}}_{1}$  rather than whole $\mathcal{\tilde{R}}_{1}$ . The following theorem tells the $UW$ procedure. We use this procedure later to define Upper$W$ maps from $\mathcal{\tilde{R}}_{11}$ to $\mathcal S_{1}(\mathbb C^3,\mathbb C^3).$

	\begin{thm}\label{UW}
		(Procedure UW) Suppose that  $(N^{(1)},N^{(2)},N^{(3)})\in \tilde{R}_{11}.$ Then there are functions $f_{1}\in \mathcal{H}_{N^{(1)}}, f_{2}\in \mathcal{H}_{N^{(2)}}$ and $g\in H_{K_{(N^{(1)},N^{(2)},N^{(3)})}}$ so that 
		$$N^{(i)}(\lambda,\textbf{z},\mu,\textbf{w})=\overline{f_{i}(\mu,\textbf{w})}f_{i}(\lambda,\textbf{z}), i=1,2$$
		and 
		$$K_{(N^{(1)},N^{(2)},N^{(3)})}(\lambda,\textbf{z},\mu,\textbf{w})=\overline{g(\mu,\textbf{w})}g(\lambda,\textbf{z})$$ for each $\lambda, \mu\in \mathbb{D}$, $\textbf{z}=(z_{1},z_{2}),\textbf{w}=(w_{1},w_{2})\in \mathbb{D}^{2}$ and a function $\Xi\in \mathcal S_{1}(\mathbb C^3,\mathbb C^3)$ so that $$\Xi\left(\begin{array}{ccc}
			1\\ z_{1}f_{1}(\lambda,\textbf{z}) \\ z_{2}f_{2}(\lambda,\textbf{z})
		\end{array}\right)=\left(\begin{array}{ccc}
			g(\lambda,\textbf{z})\\ f_{1}(\lambda,\textbf{z}) \\ f_{2}(\lambda,\textbf{z})
		\end{array}\right),$$
		$\text{for each}\,\,\lambda\in \mathbb{D}, \textbf{z}=(z_{1},z_{2})\in \mathbb{D}^{2}$.
	\end{thm}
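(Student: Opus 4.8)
The plan is to reverse the Upper$E$ construction by a standard lurking-isometry and transfer-function (realization) argument. Abbreviate $K:=K_{(N^{(1)},N^{(2)},N^{(3)})}$. The first step is to extract rank-one factorizations. Since $N^{(1)}$, $N^{(2)}$ and $K$ are analytic kernels on $\mathbb{D}^3$ of rank $1$, their reproducing kernel Hilbert spaces are one–dimensional, so by the basic theory of reproducing kernels \cite{AglerM} there are functions $f_i\in\mathcal{H}_{N^{(i)}}$ ($i=1,2$) and $g\in\mathcal{H}_{K}$, each determined up to a unimodular constant, with
$$N^{(i)}(\lambda,\textbf{z},\mu,\textbf{w})=\overline{f_i(\mu,\textbf{w})}\,f_i(\lambda,\textbf{z}),\qquad K(\lambda,\textbf{z},\mu,\textbf{w})=\overline{g(\mu,\textbf{w})}\,g(\lambda,\textbf{z}).$$
These already supply the first two displayed conclusions of the theorem; the rank-one hypotheses are exactly what force $f_1,f_2,g$ to be scalar functions, which in turn pins down the $3\times 3$ (rather than operator-valued) size of the map $\Xi$ to be produced.

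Next I would substitute these factorizations into the defining identity \eqref{kernel} for $K$ and move all the conjugate-linear monomials carrying the "remaining" spectral factors to one side. Writing $N^{(3)}(\lambda,\textbf{z},\mu,\textbf{w})=\langle v(\lambda,\textbf{z}),v(\mu,\textbf{w})\rangle_{\mathcal{H}_{N^{(3)}}}$ for the Kolmogorov representation of the positive kernel $N^{(3)}$, the rearranged identity becomes an equality of two Gram matrices on the Hilbert space $\mathcal{M}=\mathbb{C}^3\oplus\mathcal{H}_{N^{(3)}}$:
$$\left\langle\begin{pmatrix}1\\ z_1 f_1(\lambda,\textbf{z})\\ z_2 f_2(\lambda,\textbf{z})\\ \lambda\, v(\lambda,\textbf{z})\end{pmatrix},\begin{pmatrix}1\\ w_1 f_1(\mu,\textbf{w})\\ w_2 f_2(\mu,\textbf{w})\\ \mu\, v(\mu,\textbf{w})\end{pmatrix}\right\rangle_{\mathcal{M}}=\left\langle\begin{pmatrix}g(\lambda,\textbf{z})\\ f_1(\lambda,\textbf{z})\\ f_2(\lambda,\textbf{z})\\ v(\lambda,\textbf{z})\end{pmatrix},\begin{pmatrix}g(\mu,\textbf{w})\\ f_1(\mu,\textbf{w})\\ f_2(\mu,\textbf{w})\\ v(\mu,\textbf{w})\end{pmatrix}\right\rangle_{\mathcal{M}}.$$

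Because these two families have identical Gram matrices, there is a well-defined isometry $V_0$ from the closed span of the left-hand vectors onto the closed span of the right-hand vectors, carrying the vector indexed by $(\lambda,\textbf{z})$ in the first family to the corresponding vector in the second. Extending $V_0$ by $0$ on the orthogonal complement of its domain gives a contraction $V$ on $\mathcal{M}$, which I would write in block form $V=\begin{pmatrix}a&b\\ c&d\end{pmatrix}$ relative to $\mathbb{C}^3\oplus\mathcal{H}_{N^{(3)}}$. Writing $\eta=(1,z_1f_1,z_2f_2)^{\mathsf T}$ and $\zeta=(g,f_1,f_2)^{\mathsf T}$, the two block rows of $VL(\lambda,\textbf{z})=R(\lambda,\textbf{z})$ read $a\eta+\lambda b v=\zeta$ and $(I-\lambda d)v=c\eta$. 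Solving the second equation for $v$ and inserting it into the first shows that the transfer function $\Xi(\lambda):=a+\lambda\, b\,(I-\lambda d)^{-1}c$ satisfies $\Xi(\lambda)\,\eta(\lambda,\textbf{z})=\zeta(\lambda,\textbf{z})$, which is precisely the asserted intertwining relation.

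Finally I would verify that $\Xi\in\mathcal{S}_1(\mathbb{C}^3,\mathbb{C}^3)$. Since $V$ is a contraction, its compression $d$ satisfies $\|d\|\le 1$, so $(I-\lambda d)^{-1}$ exists and depends holomorphically on $\lambda$ for $\lambda\in\mathbb{D}$; hence $\Xi$ is a holomorphic $3\times 3$ matrix function on $\mathbb{D}$, and the standard norm estimate for transfer functions of contractive colligations gives $\|\Xi(\lambda)\|\le 1$ throughout $\mathbb{D}$. The main obstacle is the bookkeeping that lets the lurking isometry live on a single space: one must organize the rearranged identity so that the same positive kernel $\mathcal{H}_{N^{(3)}}$ plays the role of state space on both sides (so that $c,d$ genuinely map into $\mathcal{H}_{N^{(3)}}$ and the $\mathbb{C}^3$ coordinate is the external input/output), and then confirm that $\|d\|\le 1$, which is exactly what makes $(I-\lambda d)^{-1}$ legitimate on $\mathbb{D}$ and simultaneously renders $\Xi$ analytic and contractive.
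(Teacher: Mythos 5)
Your proposal is correct and follows essentially the same route as the paper's proof: rank-one factorizations of $N^{(1)},N^{(2)},K_{(N^{(1)},N^{(2)},N^{(3)})}$ together with a Kolmogorov representation of $N^{(3)}$, rearrangement of the kernel identity \eqref{kernel} into an equality of Gramians on $\mathbb{C}^3\oplus\mathcal{H}_{N^{(3)}}$, a lurking isometry $V_0$ extended by zero to a contraction $V$, and the transfer-function realization $\Xi(\lambda)=P+\lambda Q(I-\lambda S)^{-1}R$ whose contractivity and analyticity give $\Xi\in\mathcal{S}_1(\mathbb{C}^3,\mathbb{C}^3)$. The only cosmetic differences are notational (your $a,b,c,d$ versus the paper's $P,Q,R,S$) and that you make explicit the one-dimensionality of the rank-one reproducing kernel Hilbert spaces, which the paper uses implicitly.
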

	\begin{proof}
		We will construct the function $\Xi$ by using the famous Lurking isometry argument. It is given that $(N^{(1)},N^{(2)},N^{(3)})\in \tilde{R}_{11},$ then there exist $f_{1}\in \mathcal{H}_{N^{(1)}}, f_{2}\in \mathcal{H}_{N^{(2)}}$, $v_{\lambda,\textbf{z}}\in  \mathcal{H}_{N^{(3)}}$ and $g\in H_{K_{(N^{(1)},N^{(2)},N^{(3)})}}$ such that 
		$$N^{(i)}(\lambda,\textbf{z},\mu,\textbf{w})=\overline{f_{i}(\mu,\textbf{w})}f_{i}(\lambda,\textbf{z}), i=1,2,$$
		$$K_{(N^{(1)},N^{(2)},N^{(3)})}(\lambda,\textbf{z},\mu,\textbf{w})=\overline{g(\mu,\textbf{w})}g(\lambda,\textbf{z})$$
		and $$N^{(3)}(\lambda,\textbf{z},\mu,\textbf{w})=\left\langle v_{\lambda,\textbf{z}}^1, v_{\mu,\textbf{w}}^1\right\rangle_{\mathcal{H}_{N^{(3)}}}$$
		for all $\lambda,\mu \in \mathbb{D}\,\, \text{and} \,\,\textbf{z},\textbf{w}\in \mathbb{D}^{2}$. From equation \eqref{kernel} we have 
		\begin{equation}\label{eq12}
			\begin{aligned} \overline{g(\mu,\textbf{w})}g(\lambda,\textbf{z})&=1-(1-\bar{w}_1z_1)\overline{f_{1}(\mu,\textbf{w})}f_{1}(\lambda,\textbf{z})-(1-\bar{w}_2z_2)\overline{f_{2}(\mu,\textbf{w})}f_{2}(\lambda,\textbf{z})\\&-(1-\bar{\mu}\lambda)\langle v_{\lambda,\textbf{z}}, v_{\mu,\textbf{w}}\rangle_{\mathcal{H}_{N^{(3)}}}
			\end{aligned}	
		\end{equation}
		Rearranging the equation \eqref{eq12} we get, 
		\begin{equation}
			\begin{aligned}
				\overline{g(\mu,\textbf{w})}g(\lambda,\textbf{z})+\overline{f_{1}(\mu,\textbf{w})}f_{1}(\lambda,\textbf{z})+\overline{f_{2}(\mu,\textbf{w})}f_{2}(\lambda,\textbf{z})+\left\langle v_{\lambda,\textbf{z}}^1, v_{\mu,\textbf{w}}^1\right\rangle_{\mathcal{H}_{N^{(3)}}}&
				\\=1+\bar{w}_1z_1\overline{f_{1}\mu,\textbf{w})}f_{1}(\lambda,\textbf{z})+\bar{w}_2z_2\overline{f_{2}(\mu,\textbf{w})}f_{2}(\lambda,\textbf{z})+\bar{\mu}\lambda\left\langle v_{\lambda,\textbf{z}}^1, v_{\mu,\textbf{w}}^1\right\rangle_{\mathcal{H}_{N^{(3)}}},
			\end{aligned}
		\end{equation}
		for all $\lambda,\mu \in \mathbb{D}\,\, \text{and} \,\,\textbf{z},\textbf{w}\in \mathbb{D}^{2}$. The above equation can also be written in the form
		\begin{equation}\label{eq17}
			\left\langle\left(\begin{array}{cccc}
				g(\lambda,\textbf{z})\\ f_{1}(\lambda,\textbf{z}) \\ f_{2}(\lambda,\textbf{z}) \\ v_{\lambda,\textbf{z}} 
			\end{array}\right),\left(\begin{array}{cccc}
				g(\mu,\textbf{w})\\ f_{1}(\mu,\textbf{w}) \\ f_{2}(\mu,\textbf{w}) \\ v_{\mu,\textbf{w}} 
			\end{array}\right)\right\rangle_{\mathbb{C}^3\oplus \mathcal{H}_{N^{(3)}}}=\left\langle\left(\begin{array}{cccc}
				1\\ z_{1}f_{1}(\lambda,\textbf{z}) \\ z_{2}f_{2}(\lambda,\textbf{z}) \\ \lambda v_{\lambda,\textbf{z}}
			\end{array}\right),\left(\begin{array}{cccc}
				1\\ z_{1}f_{1}(\mu,\textbf{w}) \\ z_{2}f_{2}(\mu,\textbf{w}) \\ \mu v_{\mu,\textbf{w}}
			\end{array}\right)\right\rangle_{\mathbb{C}^3\oplus \mathcal{H}_{N^{(3)}}},
		\end{equation}
		for all $\lambda,\mu \in \mathbb{D}\,\, \text{and} \,\,\textbf{z},\textbf{w}\in \mathbb{D}^{2}$. 
		From \eqref{eq17} we observe that the Gramian of vectors
		$$\left(\begin{array}{cccc}
			g(\lambda,\textbf{z})\\ f_{1}(\lambda,\textbf{z}) \\ f_{2}(\lambda,\textbf{z}) \\ v_{\lambda,\textbf{z}} 
		\end{array}\right) \in \mathbb{C}^3\oplus \mathcal{H}_{N^{(3)}}~\text{for all}~\lambda,\mu \in \mathbb{D}\,\, \text{and} \,\,\textbf{z},\textbf{w}\in \mathbb{D}^{2} $$ is equal to the Gramian of vectors
		$$\left(\begin{array}{cccc}
			1\\ z_{1}f_{1}(\lambda,\textbf{z}) \\ z_{2}f_{2}(\lambda,\textbf{z}) \\ \lambda v_{\lambda,\textbf{z}}
		\end{array}\right)
		\in \mathbb{C}^3\oplus \mathcal{H}_{N^{(3)}}~\text{for all}~\lambda,\mu \in \mathbb{D}\,\, \text{and} \,\,\textbf{z},\textbf{w}\in \mathbb{D}^{2}. 
		$$
		Therefore, there exist an isometry 
		$$V_{0}:\operatorname{span}\left\{\left(\begin{array}{cccc}
			1\\ z_{1}f_{1}(\lambda,\textbf{z}) \\ z_{2}f_{2}(\lambda,\textbf{z}) \\ \lambda v_{\lambda,\textbf{z}}
		\end{array}\right)
		\in \mathbb{C}^3\oplus \mathcal{H}_{N^{(3)}}~\text{for all}~\lambda,\mu \in \mathbb{D}\,\, \text{and} \,\,\textbf{z},\textbf{w}\in \mathbb{D}^{2}\right\}\rightarrow \mathbb{C}^3\oplus \mathcal{H}_{N^{(3)}}$$ 
		such that 
		$$V_{0}\left(\begin{array}{cccc}
			1\\ z_{1}f_{1}(\lambda,\textbf{z}) \\ z_{2}f_{2}(\lambda,\textbf{z}) \\ \lambda v_{\lambda,\textbf{z}}
		\end{array}\right)=\left(\begin{array}{cccc}
			g(\lambda,\textbf{z})\\ f_{1}(\lambda,\textbf{z}) \\ f_{2}(\lambda,\textbf{z}) \\ v_{\lambda,\textbf{z}} 
		\end{array}\right)$$
		for all $\lambda \in \mathbb{D}\,\, \text{and} \,\,\textbf{z}\in \mathbb{D}^{2}.$
		We extend $V_0$ to a contraction $V$ on $(\mathbb{C}^{3}\oplus \mathcal{H}_{N^{(3)}} )$ by defining $ V$ to be $0$ on $$(\mathbb{C}^{3}\oplus \mathcal{H}_{N^{(3)}} )~\ominus~\operatorname{span}\{(1, z_{1}f_{1}(\lambda,\textbf{z}), z_{2}f_{2}(\lambda,\textbf{z}), \lambda v_{\lambda,\textbf{z}}
		)\}$$ and express $V$ as a block operator matrix $$V=\begin{bmatrix}
			P & Q \\ R &S
		\end{bmatrix}:\mathbb{C}^{3}\oplus \mathcal{H}_{N^{(3)}}\rightarrow\mathbb{C}^{3}\oplus \mathcal{H}_{N^{(3)}}$$ where $P:\mathbb{C}^{3}\rightarrow\mathbb{C}^{3},~Q:\mathcal{H}_{N^{(3)}}\rightarrow\mathbb{C}^{3},~R:\mathbb{C}^{3}\rightarrow\mathcal{H}_{N^{(3)}}$ and $S:\mathcal{H}_{N^{(3)}}\rightarrow \mathcal{H}_{N^{(3)}}$. Then $V$ satisfies the expression $$\begin{bmatrix}
			P & Q \\ R &S
		\end{bmatrix}\left(\begin{array}{cccc}
			1\\ z_{1}f_{1}(\lambda,\textbf{z}) \\ z_{2}f_{2}(\lambda,\textbf{z}) \\ \lambda v_{\lambda,\textbf{z}}
		\end{array}\right)=\left(\begin{array}{cccc}
			g(\lambda,\textbf{z})\\ f_{1}(\lambda,\textbf{z}) \\ f_{2}(\lambda,\textbf{z}) \\ v_{\lambda,\textbf{z}} 
		\end{array}\right)$$ for all $\lambda \in \mathbb{D}\,\, \text{and} \,\,\textbf{z}\in \mathbb{D}^{2}$. Solving the above equation we get 
		\begin{equation}\label{eq15}
			P\left(\begin{array}{ccc}
				1\\ z_{1}f_{1}(\lambda,\textbf{z}) \\ z_{2}f_{2}(\lambda,\textbf{z})
			\end{array}\right)+Q\lambda v_{\lambda,\textbf{z}}=\left(\begin{array}{cccc}
				g(\lambda,\textbf{z})\\ f_{1}(\lambda,\textbf{z}) \\ f_{2}(\lambda,\textbf{z}) 
			\end{array}\right)	
		\end{equation}
		and 
		\begin{equation}\label{eq16}
			R\left(\begin{array}{ccc}
				1\\ z_{1}f_{1}(\lambda,\textbf{z}) \\ z_{2}f_{2}(\lambda,\textbf{z})
			\end{array}\right)+S\lambda v_{\lambda,\textbf{z}}= v_{\lambda,\textbf{z}}	
		\end{equation}
		for all $\lambda \in \mathbb{D}\,\, \text{and} \,\,\textbf{z}\in \mathbb{D}^{2}$.
		From \eqref{eq16} we have $$v_{\lambda,\textbf{z}}=(I_{\mathcal{H}_{N^{(3)}}}-S\lambda)^{-1}R\left(\begin{array}{ccc}
			1\\ z_{1}f_{1}(\lambda,\textbf{z}) \\ z_{2}f_{2}(\lambda,\textbf{z})
		\end{array}\right)$$ for all $\lambda \in \mathbb{D}\,\, \text{and} \,\,\textbf{z}\in \mathbb{D}^{2}$ Putting the value of $v_{\lambda,\textbf{w}}$ in equation \eqref{eq15} we have the form $$(P+Q\lambda(I_{\mathcal{H}_{N^{(3)}}}-S\lambda)^{-1}R)\left(\begin{array}{ccc}
			1\\ z_{1}f_{1}(\lambda,\textbf{z}) \\ z_{2}f_{2}(\lambda,\textbf{z})
		\end{array}\right)=\left(\begin{array}{ccc}
			g(\lambda,\textbf{z})\\ f_{1}(\lambda,\textbf{z}) \\ f_{2}(\lambda,\textbf{z}) 
		\end{array}\right).$$ Let $\mathcal{G}_{V}(\lambda)=P+Q\lambda(I_{\mathcal{H}_{N^{(3)}}}-S\lambda)^{-1}R,$ then from \cite[Proposition 2.8]{pal1} and \cite[Proposition 2.9]{pal1} we have $\|\mathcal{G}_{V}(\lambda)\|\leq 1$ for all $\lambda \in \mathbb{D}$. Also, $\mathcal{G}_{V}(\lambda)$ is analytic on $\mathbb{D}$ and hence $\mathcal{G}_{V}\in \mathcal S_{1}(\mathbb C^3,\mathbb C^3)$. Now if $\Xi=\mathcal{G}_{L}$, then it satisfy all the required property. 
	\end{proof}

	Note that the function $\Xi$ constructed in the above procedure is not unique as the functions $f_1,f_2,g$ are not uniquely determined. The following proposition tells us the same.
	\begin{prop}
		Assume that $(N^{(1)},N^{(2)},N^{(3)})\in \tilde{R}_{11}$. Suppose that $f_1, \hat{f_1} \in \mathcal{H}_{N^{(1)}}$, $f_2, \hat{f_2} \in \mathcal{H}_{N^{(2)}}$, $ v_{\lambda,\textbf{z}}^1,\\ v_{\lambda,\textbf{z}}^2 \in \mathcal{H}_{N^{(3)}}$ and $g, \hat{g} \in H_{K_{(N^{(1)},N^{(2)},N^{(3)})}}$ with
		
		\begin{equation}\label{eq1}
			N^{(i)}(\lambda,\textbf{z},\mu,\textbf{w})=\overline{f_i(\mu,\textbf{w})} f_i(\lambda,\textbf{z})=\overline{\hat{f_i}(\mu,\textbf{w})} \hat{f_i}(\lambda,\textbf{z}),\,\, i=1,2
		\end{equation}
		\begin{equation}\label{eq2}
			N^{(3)}(\lambda,\textbf{z},\mu,\textbf{w})=\left\langle v_{\lambda,\textbf{z}}^1, v_{\mu,\textbf{w}}^1\right\rangle_{\mathcal{H}_{N^{(3)}}}=\left\langle v_{\lambda,\textbf{z}}^2, v_{\mu,\textbf{w}}^2\right\rangle_{\mathcal{H}_{N^{(3)}}}
		\end{equation}
		and
		\begin{equation}\label{eq3}
			K_{(N^{(1)},N^{(2)},N^{(3)})}(\lambda,\textbf{z},\mu,\textbf{w})=\overline{g(\mu,\textbf{w})} g(\lambda,\textbf{z})=\overline{\hat{g}(\mu,\textbf{w})} \hat{g}(\lambda,\textbf{z})	
		\end{equation}
		for all $z_1,z_2,\lambda,w_1,w_2, \mu \in \mathbb{D}$. With the functions $f_1,f_2,g,v^1$ and $\hat{f_1},\hat{f_2},\hat{g},v^2$, respectively, let $\Xi_1(\lambda)$ and $\Xi_2(\lambda)$ be contructed from $(N^{(1)},N^{(2)},N^{(3)})$ using $U W$ procedure. Then we have
		$$
		\Xi_2(\lambda)=\left[\begin{array}{ccc}
			\eta_1 & 0 & 0 \\
			0 & \eta_2 & 0 \\
			0 & 0 & \eta_3
		\end{array}\right] \Xi_1(\lambda)\left[\begin{array}{ccc}
			1 & 0 & 0 \\
			0 & \bar{\eta_2} & 0 \\
			0 & 0 & \bar{\eta_3}
		\end{array}\right]
		$$
		for some $\eta_1,\eta_2,\eta_3 \in \mathbb{T}$.
	\end{prop}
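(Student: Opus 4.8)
The plan is to avoid comparing $\Xi_1$ and $\Xi_2$ entrywise and instead to trace the non-uniqueness back to the lurking isometry of Theorem~\ref{UW}, then push it through the realisation map $\mathcal G_V(\lambda)=P+Q\lambda(I_{\mathcal H_{N^{(3)}}}-S\lambda)^{-1}R$ constructed there.

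First I would pin down the rigidity of the data. Since $N^{(i)}=\overline{f_i(\mu,\textbf{w})}f_i(\lambda,\textbf{z})=\overline{\hat f_i(\mu,\textbf{w})}\hat f_i(\lambda,\textbf{z})$ is a kernel of rank one, the space $\mathcal H_{N^{(i)}}$ is one-dimensional, and the reproducing property forces both $f_i$ and $\hat f_i$ to be \emph{unit} vectors in it; hence $\hat f_i=\eta_{i+1}f_i$ for some $\eta_{i+1}\in\mathbb T$ ($i=1,2$). The identical argument applied to the rank-one kernel $K_{(N^{(1)},N^{(2)},N^{(3)})}=\overline{g}\,g=\overline{\hat g}\,\hat g$ gives $\hat g=\eta_1 g$ with $\eta_1\in\mathbb T$. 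For the third kernel, by \eqref{eq2} the two families $v^1_{\lambda,\textbf{z}}$ and $v^2_{\lambda,\textbf{z}}$ realise the same Gram matrix $N^{(3)}$, so $v^1_{\lambda,\textbf{z}}\mapsto v^2_{\lambda,\textbf{z}}$ extends to a unitary $U$ of $\mathcal H_{N^{(3)}}$ with $v^2_{\lambda,\textbf{z}}=Uv^1_{\lambda,\textbf{z}}$ (one may take $U=I$ for the canonical choice of kernel sections).

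Next I would set $\Delta_{\mathrm{out}}=\operatorname{diag}[\eta_1,\eta_2,\eta_3]$ and $\Delta_{\mathrm{in}}=\operatorname{diag}[1,\eta_2,\eta_3]$, put $\mathcal U_{\mathrm{in}}:=\Delta_{\mathrm{in}}\oplus U$ and $\mathcal U_{\mathrm{out}}:=\Delta_{\mathrm{out}}\oplus U$ on $\mathbb C^3\oplus\mathcal H_{N^{(3)}}$, and compare the two canonically extended isometries $V_1,V_2$ produced by the $UW$ procedure. Using only $\overline{\eta_j}\eta_j=1$ and $Uv^1=v^2$ one checks that $\mathcal U_{\mathrm{in}}^{*}$ carries the input vector $(1,z_1\hat f_1,z_2\hat f_2,\lambda v^2)$ of $V_2$ onto the input vector $(1,z_1 f_1,z_2 f_2,\lambda v^1)$ of $V_1$, while $\mathcal U_{\mathrm{out}}$ carries the corresponding image $(g,f_1,f_2,v^1)$ onto $(\hat g,\hat f_1,\hat f_2,v^2)$; hence $V_2$ and $\mathcal U_{\mathrm{out}}V_1\mathcal U_{\mathrm{in}}^{*}$ agree on the span of the inputs of $V_2$. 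The main obstacle is to upgrade this to the exact operator identity $V_2=\mathcal U_{\mathrm{out}}V_1\mathcal U_{\mathrm{in}}^{*}$, i.e. to check that the two sides also agree on the orthogonal complement of that span. This is precisely where the \emph{canonical} zero-extension in the $UW$ procedure matters: $V_2$ is defined to be zero there, and since $\mathcal U_{\mathrm{in}}$ is unitary and maps the $V_1$-input span onto the $V_2$-input span, it maps the complement of the former onto the complement of the latter, on which $V_1$ (and therefore $\mathcal U_{\mathrm{out}}V_1\mathcal U_{\mathrm{in}}^{*}$) vanishes as well.

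Finally I would write $V_1=\left[\begin{smallmatrix}P&Q\\R&S\end{smallmatrix}\right]$, so that the operator identity of the previous step reads $P_2=\Delta_{\mathrm{out}}P\Delta_{\mathrm{in}}^{*}$, $Q_2=\Delta_{\mathrm{out}}QU^{*}$, $R_2=UR\Delta_{\mathrm{in}}^{*}$ and $S_2=USU^{*}$. Substituting into $\mathcal G_{V_2}$ and using $(I_{\mathcal H_{N^{(3)}}}-USU^{*}\lambda)^{-1}=U(I_{\mathcal H_{N^{(3)}}}-S\lambda)^{-1}U^{*}$ cancels the unitaries in the resolvent term and collapses it, giving $\mathcal G_{V_2}(\lambda)=\Delta_{\mathrm{out}}\,\mathcal G_{V_1}(\lambda)\,\Delta_{\mathrm{in}}^{*}$. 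Since $\Xi_1=\mathcal G_{V_1}$, $\Xi_2=\mathcal G_{V_2}$ and $\Delta_{\mathrm{in}}^{*}=\operatorname{diag}[1,\bar\eta_2,\bar\eta_3]$, this is exactly the asserted relation. Throughout, the unimodular bookkeeping and the resolvent identity are routine; the one delicate point is the exactness flagged above, namely that the conjugation respects the canonical zero-extension.
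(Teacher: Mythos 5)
Your route is genuinely different from the paper's, and it is worth saying what each buys. The paper never re-enters the lurking-isometry construction: it only uses the intertwining identities \eqref{eq4} that Theorem~\ref{UW} guarantees, deduces $\hat{f_1}=\eta_2 f_1$, $\hat{f_2}=\eta_3 f_2$, $\hat{g}=\eta_1 g$ just as you do, and then observes that $\Xi_2(\lambda)\operatorname{diag}[1,\eta_2,\eta_3]-\operatorname{diag}[\eta_1,\eta_2,\eta_3]\Xi_1(\lambda)$ annihilates every vector $\left(1,\,z_1f_1(\lambda,\textbf{z}),\,z_2f_2(\lambda,\textbf{z})\right)$; since $f_1,f_2$ are nonzero analytic functions whose zero sets are nowhere dense in $\mathbb{D}^3$, these vectors span $\mathbb{C}^3$ and the difference vanishes. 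That argument is shorter and never touches $v^1,v^2$ or the operators $V_1,V_2$; yours, if completed, gives the stronger operator-level statement $V_2=\mathcal U_{\mathrm{out}}V_1\mathcal U_{\mathrm{in}}^{*}$ together with the clean resolvent collapse of the transfer functions.

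However, your argument has a genuine gap at its first structural step: the assertion that $v^1_{\lambda,\textbf{z}}\mapsto v^2_{\lambda,\textbf{z}}$ \emph{extends to a unitary $U$ of $\mathcal H_{N^{(3)}}$}. Equality of the Gram matrices in \eqref{eq2} only produces a unitary $U_0$ from $\overline{\operatorname{span}}\{v^1_{\lambda,\textbf{z}}\}$ onto $\overline{\operatorname{span}}\{v^2_{\lambda,\textbf{z}}\}$; when $\mathcal H_{N^{(3)}}$ is infinite dimensional the two orthocomplements need not have the same dimension, and then no unitary of the whole space can exist. A concrete instance consistent with the hypotheses: take $v^1_{\lambda,\textbf{z}}$ to be the kernel sections of $N^{(3)}$ (dense span) and $v^2_{\lambda,\textbf{z}}=Wv^1_{\lambda,\textbf{z}}$ with $W$ a non-surjective isometry of $\mathcal H_{N^{(3)}}$; then \eqref{eq2} holds, but any unitary sending $v^1$ to $v^2$ would carry a dense subspace onto the non-dense span of $\{v^2_{\lambda,\textbf{z}}\}$, which is impossible. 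Without a globally defined unitary, your operators $\mathcal U_{\mathrm{in}},\mathcal U_{\mathrm{out}}$ and the cancellation $(I-USU^{*}\lambda)^{-1}=U(I-S\lambda)^{-1}U^{*}$ are unavailable. The gap is repairable, but it costs real work that the writeup omits: one checks that $V_1$ vanishes on $\{0\}\oplus\bigl(\overline{\operatorname{span}}\{v^1_{\lambda,\textbf{z}}\}\bigr)^{\perp}$ and has range inside $\mathbb{C}^3\oplus\overline{\operatorname{span}}\{v^1_{\lambda,\textbf{z}}\}$, so that $Q=QP_1$, $R=P_1R$, $S=P_1SP_1$ with $P_1$ the orthogonal projection onto $\overline{\operatorname{span}}\{v^1_{\lambda,\textbf{z}}\}$; replacing $U$ by the partial isometry $\tilde U=U_0P_1$ one still gets $V_2=\mathcal U_{\mathrm{out}}V_1\mathcal U_{\mathrm{in}}^{*}$, and a Neumann-series computation using the above range relations shows $\mathcal G_{V_2}(\lambda)=\Delta_{\mathrm{out}}\mathcal G_{V_1}(\lambda)\Delta_{\mathrm{in}}^{*}$. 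Alternatively, abandon the state-space route altogether and argue from the intertwining relations alone, as the paper does, which sidesteps $v^1,v^2$ entirely.
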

	\begin{proof}
		It is evident from the equation \eqref{eq1} and \eqref{eq3} that  $\hat{f_1}=\eta_2 f_1$, $\hat{f_2}=\eta_3 f_2$ and $\hat{g_1}=\eta_1 g_1$ respectively for some $\eta_1,\eta_2,\eta_3 \in \mathbb{T}$. By using Theorem \eqref{UW}, there exist $\Xi_1(\lambda),\Xi_2(\lambda) \in \mathcal S_{1}(\mathbb C^3,\mathbb C^3)$ which satisfy the following conditions:
		\begin{equation}\label{eq4}
			\Xi_1(\lambda)\left(\begin{array}{ccc}
				1\\ z_{1}f_{1}(\lambda,\textbf{z}) \\ z_{2}f_{2}(\lambda,\textbf{z})
			\end{array}\right)=\left(\begin{array}{ccc}
				g(\lambda,\textbf{z})\\ f_{1}(\lambda,\textbf{z}) \\ f_{2}(\lambda,\textbf{z})
			\end{array}\right)~~\text{and}~~\Xi_2(\lambda)\left(\begin{array}{ccc}
				1\\ z_{1}\hat{f_1}(\lambda,\textbf{z}) \\ z_{2}\hat{f_2}(\lambda,\textbf{z})
			\end{array}\right)=\left(\begin{array}{ccc}
				\hat{g}(\lambda,\textbf{z})\\ \hat{f_1}(\lambda,\textbf{z}) \\ \hat{f_2}(\lambda,\textbf{z})
			\end{array}\right)
		\end{equation}
		
		for all $\lambda \in \mathbb{D}$ and $\textbf{z}\in \mathbb{D}^{2}$. Hence from equation \eqref{eq4} we have
		\begin{equation}\label{eq5}
			\Xi_2(\lambda)\left(\begin{array}{ccc}
				1\\ z_{1}\hat{f_1}(\lambda,\textbf{z}) \\ z_{2}\hat{f_2}(\lambda,\textbf{z})
			\end{array}\right)=\Xi_2(\lambda)\left(\begin{array}{ccc}
				1 & 0 & 0 \\
				0 & \eta_2 & 0 \\
				0 & 0 & \eta_3
			\end{array}\right)\left(\begin{array}{ccc}
				1 \\ z_1f_{1}(\lambda,\textbf{z}) \\ z_2f_{2}(\lambda,\textbf{z})
			\end{array}\right)
		\end{equation}
		and  
		\begin{equation}\label{eq6}
			\left(\begin{array}{ccc}
				\hat{g}(\lambda,\textbf{z})\\ \hat{f_1}(\lambda,\textbf{z}) \\ \hat{f_2}(\lambda,\textbf{z})
			\end{array}\right)=\left(\begin{array}{ccc}
				\eta_1 & 0 & 0 \\
				0 & \eta_2 & 0 \\
				0 & 0 & \eta_3
			\end{array}\right)\left(\begin{array}{ccc}
				g(\lambda,\textbf{z})\\ f_{1}(\lambda,\textbf{z}) \\ f_{2}(\lambda,\textbf{z})
			\end{array}\right)=\left(\begin{array}{ccc}
				\eta_1 & 0 & 0 \\
				0 & \eta_2 & 0 \\
				0 & 0 & \eta_3
			\end{array}\right)\Xi_1(\lambda)\left(\begin{array}{ccc}
				1\\ z_{1}f_{1}(\lambda,\textbf{z}) \\ z_{2}f_{2}(\lambda,\textbf{z})
			\end{array}\right)
		\end{equation}
		for all $\lambda \in \mathbb{D}$ and $\textbf{z}\in \mathbb{D}^{2}$.  By using equation \eqref{eq4}, \eqref{eq5} and \eqref{eq6} it yields that 
		$$\left(\Xi_2(\lambda)\left(\begin{array}{ccc}
			1 & 0 & 0 \\
			0 & \eta_2 & 0 \\
			0 & 0 & \eta_3
		\end{array}\right)-\left(\begin{array}{ccc}
			\eta_1 & 0 & 0 \\
			0 & \eta_2 & 0 \\
			0 & 0 & \eta_3
		\end{array}\right)\Xi_1(\lambda)\right)\left(\begin{array}{ccc}
			1 \\ z_1f_{1}(\lambda,\textbf{z}) \\ z_2f_{2}(\lambda,\textbf{z})
		\end{array}\right)=0$$
		for all $\lambda \in \mathbb{D}$ and $\textbf{z}\in \mathbb{D}^{2}$. As $f_i~~i=1,2$ are nonzero analytic functions of three variables, the set of zeros of $f_i,~i=1,2$ is nowhere dense in $\mathbb{D}^{3}$. This shows that $$\Xi_2(\lambda)=\left(\begin{array}{ccc}
			\eta_1 & 0 & 0 \\
			0 & \eta_2 & 0 \\
			0 & 0 & \eta_3
		\end{array}\right) \Xi_1(\lambda)\left(\begin{array}{ccc}
			1 & 0 & 0 \\
			0 & \bar{\eta_2} & 0 \\
			0 & 0 & \bar{\eta_3}
		\end{array}\right)$$ for all $\lambda \in \mathbb{D}$. This completes the proof.
		
	\end{proof} 
	As a consequence of the above Proposition, we prove the following corollary.	
	\begin{cor}\label{cor}
		Let $\Xi$ be a function that is being constructed from $(N^{(1)},N^{(2)},N^{(3)})\in \tilde{R}_{11}$ by using the procedure $UW$. Then 
		$$\{\left[\begin{array}{ccc}
			\eta_1 & 0 & 0 \\
			0 & \eta_2 & 0 \\
			0 & 0 & \eta_3
		\end{array}\right] \Xi\left[\begin{array}{ccc}
			1 & 0 & 0 \\
			0 & \bar{\eta_2} & 0 \\
			0 & 0 & \bar{\eta_3}
		\end{array}\right]:\eta_1,\eta_2,\eta_3 \in \mathbb{T}\} \subseteq \mathcal S_{1}(\mathbb C^3,\mathbb C^3)$$
	\end{cor}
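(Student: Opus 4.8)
The plan is to derive this as a direct consequence of the preceding proposition, which already establishes that any two functions $\Xi_1$ and $\Xi_2$ arising from the same triple $(N^{(1)},N^{(2)},N^{(3)})\in\tilde{R}_{11}$ via the $UW$ procedure are related by $\Xi_2(\lambda)=\operatorname{diag}[\eta_1,\eta_2,\eta_3]\,\Xi_1(\lambda)\,\operatorname{diag}[1,\bar\eta_2,\bar\eta_3]$ for suitable $\eta_1,\eta_2,\eta_3\in\mathbb{T}$. Since the statement asks only for an inclusion, the essential content is that every member of the displayed set actually lies in $\mathcal{S}_1(\mathbb{C}^3,\mathbb{C}^3)$.

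First I would fix one function $\Xi=\mathcal{G}_V$ produced by the procedure $UW$ from the given triple, so that by Theorem \ref{UW} we have $\Xi\in\mathcal{S}_1(\mathbb{C}^3,\mathbb{C}^3)$; in particular $\Xi$ is analytic on $\mathbb{D}$ with $\|\Xi(\lambda)\|\le 1$ for all $\lambda\in\mathbb{D}$. Next, for arbitrary $\eta_1,\eta_2,\eta_3\in\mathbb{T}$, I would set $U=\operatorname{diag}[\eta_1,\eta_2,\eta_3]$ and $W=\operatorname{diag}[1,\bar\eta_2,\bar\eta_3]$ and consider the matrix function $\lambda\mapsto U\,\Xi(\lambda)\,W$. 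The point is that both $U$ and $W$ are constant unitary matrices on $\mathbb{C}^3$: indeed each has orthonormal columns since $|\eta_i|=1$ and $|\bar\eta_i|=1$. Because $U,W$ are constant in $\lambda$, the product $U\,\Xi(\lambda)\,W$ remains analytic on $\mathbb{D}$, being a composition of the analytic $\Xi$ with fixed linear maps.

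The key estimate is then the norm bound. For each $\lambda\in\mathbb{D}$, since the operator norm is unitarily invariant we have
\[
\|U\,\Xi(\lambda)\,W\|=\|\Xi(\lambda)\|\le 1.
\]
Hence $U\,\Xi(\lambda)\,W$ is an analytic $3\times 3$ matrix function on $\mathbb{D}$ that is contractive at every point, which is exactly the defining condition for membership in the matricial Schur class $\mathcal{S}_1(\mathbb{C}^3,\mathbb{C}^3)$. Since $\eta_1,\eta_2,\eta_3\in\mathbb{T}$ were arbitrary, every element of the displayed set lies in $\mathcal{S}_1(\mathbb{C}^3,\mathbb{C}^3)$, giving the claimed inclusion.

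I expect no serious obstacle here: the corollary is purely a unitary-invariance observation once one recognizes $\operatorname{diag}[\eta_1,\eta_2,\eta_3]$ and $\operatorname{diag}[1,\bar\eta_2,\bar\eta_3]$ as unitaries on $\mathbb{C}^3$. The only point deserving a line of care is the verification that left and right multiplication by constant matrices preserves analyticity, which is immediate, and that unitarity of the two diagonal factors follows from $|\eta_i|=1$. In essence the preceding proposition already identifies the orbit of $\Xi$ under the $UW$ procedure with precisely this set, and the corollary merely records that this whole orbit sits inside the Schur class.
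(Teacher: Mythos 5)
Your proof is correct, but it takes a genuinely different route from the paper's. The paper presents this corollary as a consequence of the preceding uniqueness proposition: since $|\eta_i|=1$, the rotated functions $\hat{f_1}=\eta_2 f_1$, $\hat{f_2}=\eta_3 f_2$, $\hat{g}=\eta_1 g$ are equally valid rank-one factorizations of $N^{(1)}$, $N^{(2)}$ and $K_{(N^{(1)},N^{(2)},N^{(3)})}$, so the $UW$ procedure can be run with them, producing a function $\Xi_2\in\mathcal{S}_1(\mathbb{C}^3,\mathbb{C}^3)$ which, by the computation in the proposition's proof, is exactly $\operatorname{diag}[\eta_1,\eta_2,\eta_3]\,\Xi\,\operatorname{diag}[1,\bar\eta_2,\bar\eta_3]$; membership in the Schur class then comes from Theorem \ref{UW} itself. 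You instead bypass the proposition entirely and argue by unitary invariance: the two diagonal factors are constant unitaries, so left and right multiplication preserves analyticity and the pointwise operator norm bound. Your argument is shorter, self-contained, and fully rigorous for the stated inclusion. What the paper's route buys in exchange is the stronger fact that each element of the orbit is itself an output of the $UW$ procedure applied to $(N^{(1)},N^{(2)},N^{(3)})$ --- not merely a Schur-class function --- and it is this stronger fact (that the orbit sits inside $Upper\,W(N^{(1)},N^{(2)},N^{(3)})$) that is implicitly used later when the paper identifies $Upper\,W\circ Upper\,E(F)$ with the full orbit of $F$. So while your proof settles the corollary as literally stated, you should be aware that it does not by itself support that later identification.
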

	
	\begin{defn}
		The map $Upper \,\,W:\tilde{R}_{11}\rightarrow \mathcal S_{1}(\mathbb C^3,\mathbb C^3)$ is given by 
		\begin{align*}Upper \,\,W(N^{(1)},N^{(2)},N^{(3)})&=\{\Xi\in \mathcal S_{1}(\mathbb C^3,\mathbb C^3)\,\, \text{constructed by procedure}\,\, UW \,\,\text{for}\,\, \\&(N^{(1)},N^{(2)},N^{(3)}) \in \tilde{R}_{11}\}.\end{align*}
	\end{defn}
	\begin{prop}\label{comp1}
		Let $(N^{(1)},N^{(2)},N^{(3)})\in \tilde{R}_{11}$ and  $\Xi\in Upper \,\,W(N^{(1)},N^{(2)},N^{(3)}).$ Then
		$$Upper \,\,E(\Xi)=(N^{(1)},N^{(2)},N^{(3)})$$
	\end{prop}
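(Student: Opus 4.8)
The plan is to verify directly that the triple $Upper\,E(\Xi)=(N^{(1)}_{\Xi},N^{(2)}_{\Xi},N^{(3)}_{\Xi})$ obtained from $\Xi$ through \eqref{eq8} and \eqref{eq9} equals the original triple $(N^{(1)},N^{(2)},N^{(3)})$; in other words, one wants to show that $Upper\,E$ inverts the $UW$ procedure. The single fact that drives everything is the identification $\tilde{\gamma}_i=f_i$ for $i=1,2$, where the $\tilde{\gamma}_i$ are the functions intrinsically attached to $\Xi$ via \eqref{gaa} and the $f_i$ are the functions used in the $UW$ construction of $\Xi$.

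To establish this, I would first unpack the defining relation of $\Xi$ from Theorem \eqref{UW}. Writing $\Xi=[\Xi_{ij}]_{i,j=1}^{3}$ and reading off the second and third coordinates of
$$\Xi\begin{pmatrix} 1\\ z_{1}f_{1}(\lambda,\textbf{z})\\ z_{2}f_{2}(\lambda,\textbf{z})\end{pmatrix}=\begin{pmatrix} g(\lambda,\textbf{z})\\ f_{1}(\lambda,\textbf{z})\\ f_{2}(\lambda,\textbf{z})\end{pmatrix},$$
one obtains
$$\left(I_{2}-\begin{pmatrix}\Xi_{22}&\Xi_{23}\\ \Xi_{32}&\Xi_{33}\end{pmatrix}\begin{pmatrix} z_{1}&0\\ 0&z_{2}\end{pmatrix}\right)\begin{pmatrix} f_{1}(\lambda,\textbf{z})\\ f_{2}(\lambda,\textbf{z})\end{pmatrix}=\begin{pmatrix}\Xi_{21}(\lambda)\\ \Xi_{31}(\lambda)\end{pmatrix}.$$
Since $\Xi\in\mathcal{S}_{1}(\mathbb{C}^{3},\mathbb{C}^{3})$, the lower right $2\times 2$ block of $\Xi(\lambda)$ is contraction-valued, so exactly as in the well-definedness of $SE$ the matrix $C$ is invertible on $\mathbb{D}^{2}$ for every $\lambda\in\mathbb{D}$; the linear system therefore has a unique solution, which by \eqref{gaa} applied to $\Xi$ is precisely $\tilde{\gamma}(\lambda,\textbf{z})$. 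Hence $\tilde{\gamma}_i(\lambda,\textbf{z})=f_i(\lambda,\textbf{z})$ for $i=1,2$, and consequently the column vector $\tilde{\eta}(\lambda,\textbf{z})$ of \eqref{ett} coincides with $(1,z_{1}f_{1}(\lambda,\textbf{z}),z_{2}f_{2}(\lambda,\textbf{z}))$, so that $\Xi(\lambda)\tilde{\eta}(\lambda,\textbf{z})=(g(\lambda,\textbf{z}),f_{1}(\lambda,\textbf{z}),f_{2}(\lambda,\textbf{z}))$.

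The cases $i=1,2$ are now immediate: substituting $\tilde{\gamma}_i=f_i$ into \eqref{eq8} gives $N^{(i)}_{\Xi}=\overline{f_i(\mu,\textbf{w})}f_i(\lambda,\textbf{z})=N^{(i)}$. For $i=3$ I would evaluate \eqref{eq9} using the two relations just obtained. The numerator $\tilde{\eta}(\mu,\textbf{w})^{*}(I-\Xi(\mu)^{*}\Xi(\lambda))\tilde{\eta}(\lambda,\textbf{z})$ splits as $\tilde{\eta}(\mu,\textbf{w})^{*}\tilde{\eta}(\lambda,\textbf{z})$ minus $\big(\overline{g(\mu,\textbf{w})}g(\lambda,\textbf{z})+\overline{f_{1}(\mu,\textbf{w})}f_{1}(\lambda,\textbf{z})+\overline{f_{2}(\mu,\textbf{w})}f_{2}(\lambda,\textbf{z})\big)$, while $\tilde{\eta}(\mu,\textbf{w})^{*}\tilde{\eta}(\lambda,\textbf{z})=1+\bar{w}_{1}z_{1}\overline{f_{1}(\mu,\textbf{w})}f_{1}(\lambda,\textbf{z})+\bar{w}_{2}z_{2}\overline{f_{2}(\mu,\textbf{w})}f_{2}(\lambda,\textbf{z})$. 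Dividing by $1-\bar{\mu}\lambda$ and comparing with the value of $(1-\bar{\mu}\lambda)N^{(3)}$ obtained by solving the defining relation \eqref{kernel} for $N^{(3)}$ — substituting $K_{(N^{(1)},N^{(2)},N^{(3)})}=\overline{g(\mu,\textbf{w})}g(\lambda,\textbf{z})$ and $N^{(i)}=\overline{f_i(\mu,\textbf{w})}f_i(\lambda,\textbf{z})$ — shows the two expressions agree term by term, giving $N^{(3)}_{\Xi}=N^{(3)}$.

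I do not anticipate a genuine obstacle: once $\tilde{\gamma}_i=f_i$ is secured, the identity is pure bookkeeping against \eqref{kernel}. The only point that needs care is the step forcing $\tilde{\gamma}_i=f_i$, namely the invertibility of $C$ guaranteeing uniqueness of the solution of the linear system; this is what pins the intrinsic data of $\Xi$ to the construction data $(f_{1},f_{2},g)$ rather than leaving it merely compatible.
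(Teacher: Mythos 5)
Your proposal is correct and takes essentially the same approach as the paper: both proofs pin the $UW$ construction data $(f_1,f_2,g)$ to the intrinsic data $(\tilde{\gamma}_1,\tilde{\gamma}_2,\mathcal{G}_{\Xi})$ of $\Xi$ via the relation $\Xi(\lambda)(1,z_1f_1,z_2f_2)^{T}=(g,f_1,f_2)^{T}$, which yields $N^{(i)}_{\Xi}=N^{(i)}$ for $i=1,2$ at once, and then recover $N^{(3)}_{\Xi}=N^{(3)}$ by comparing the two decompositions of $K_{(N^{(1)},N^{(2)},N^{(3)})}$. The only differences are cosmetic: you deduce $f_i=\tilde{\gamma}_i$ from invertibility of $C$ and uniqueness of the linear solve (which, incidentally, justifies the non-vanishing of the determinant more explicitly than the paper's Cramer-style explicit solution does), and you expand the numerator of \eqref{eq9} directly where the paper instead cites \cite[Proposition 2.11]{pal1}.
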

	\begin{proof}
		Suppose that $\Xi=((a_{ij}))_{i.j=1}^{3}\in \mathcal S_{1}(\mathbb C^3,\mathbb C^3).$ Then $Upper E(\Xi)=(N_{\Xi}^{(1)},N_{\Xi}^{(2)},N_{\Xi}^{(3)}),$ where $$N_{\Xi}^{(i)}(\lambda,\textbf{z},\mu,\textbf{w})=\overline{f_i(\mu,\textbf{w})} f_i(\lambda,\textbf{z}),i=1,2$$
		and 
		$$N_{\Xi}^{(3)}(\lambda,\textbf{z},\mu,\textbf{w})=\left[ 1~~\bar{w}_{1}\overline{f_{1}(\mu,\textbf{w})}~~\bar{w}_{2}\overline{f_{2}(\mu,\textbf{w})}\right]\frac{I-\Xi(\mu)^{*}\Xi(\lambda)}{1-\bar{\mu}\lambda}\left[\begin{array}{ccc}
			1\\ z_{1}f_{1}(\lambda,\textbf{z}) \\ z_{2}f_{2}(\lambda,\textbf{z})
		\end{array}\right]$$
		with
		$$f_{1}(\lambda,\textbf{z})=\frac{a_{21}(\lambda)-a_{21}(\lambda)a_{33}(\lambda)z_{2}+a_{23}(\lambda)a_{31}(\lambda)z_{2}}{1-a_{22}(\lambda)z_{1}-a_{33}(\lambda)z_{2}+(a_{22}(\lambda)a_{33}(\lambda)-a_{23}(\lambda)a_{32}(\lambda))z_{1}z_{2}}$$
		and 
		$$f_2(\lambda,\textbf{z})=\frac{a_{31}(\lambda)+a_{21}(\lambda)a_{32}(\lambda)z_{1}-a_{22}(\lambda)a_{31}(\lambda)z_{1}}{1-a_{22}(\lambda)z_{1}-a_{33}(\lambda)z_{2}+(a_{22}(\lambda)a_{33}(\lambda)-a_{23}(\lambda)a_{32}(\lambda))z_{1}z_{2}}$$
		for all $\lambda,\mu \in \mathbb{D}\,\, \text{and} \,\,\textbf{z},\textbf{w}\in \mathbb{D}^{2}$.
		By hypothesis, we note that $\Xi\in Upper~W(N^{(1)},N^{(2)},N^{(3)})$. Hence there exist functions $f_1,f_2$ and $g$ such that  \begin{equation}\label{eq18}
			N^{(i)}(\lambda,\textbf{z},\mu,\textbf{w})=\overline{f_i(\mu,\textbf{w})} f_i(\lambda,\textbf{z}),\,\, i=1,2,
		\end{equation}
		
		\begin{equation}\label{eq19}
			K_{(N^{(1)},N^{(2)},N^{(3)})}(\lambda,\textbf{z},\mu,\textbf{w})=\overline{g(\mu,\textbf{w})} g(\lambda,\textbf{z})	
		\end{equation}
		and 
		\begin{equation}\label{20}
			\Xi(\lambda)\left(\begin{array}{ccc}
				1\\ z_{1}f_{1}(\lambda,\textbf{z}) \\ z_{2}f_{2}(\lambda,\textbf{z})
			\end{array}\right)=\left(\begin{array}{ccc}
				g(\lambda,\textbf{z})\\ f_{1}(\lambda,\textbf{z}) \\ f_{2}(\lambda,\textbf{z})
			\end{array}\right)
		\end{equation} for all $z_1,z_2,\lambda,w_1,w_2, \mu \in \mathbb{D}$.
		It follows from \eqref{20} that
		\begin{equation}\label{21}
			a_{11}(\lambda)+z_{1}f_{1}(\lambda,\textbf{z})a_{12}(\lambda)+z_{2}f_{2}(\lambda,\textbf{z}) a_{13}(\lambda)=g(\lambda,\textbf{z}),  
		\end{equation}
		\begin{equation}\label{22}
			a_{21}(\lambda)+a_{22}(\lambda)z_{1}f_{1}(\lambda,\textbf{z})+a_{23}(\lambda)z_{2}f_{2}(\lambda,\textbf{z})=f_{1}(\lambda,\textbf{z})
		\end{equation}
		and
		\begin{equation}\label{23}
			a_{31}(\lambda)+a_{32}(\lambda)z_{1}f_{1}(\lambda,\textbf{z})+a_{33}(\lambda)z_{2}f_{2}(\lambda,\textbf{z})=f_{2}(\lambda,\textbf{z})	
		\end{equation}
		It yields from \eqref{22} and \eqref{23} that  $$1-a_{22}(\lambda)z_{1}-a_{33}(\lambda)z_{2}+(a_{22}(\lambda)a_{33}(\lambda)-a_{23}(\lambda)a_{32}(\lambda))z_{1}z_{2}\neq 0,$$ $$f_1(\lambda,\textbf{z})=\frac{a_{21}(\lambda)-a_{21}(\lambda)a_{33}(\lambda)z_{2}+a_{23}(\lambda)a_{31}(\lambda)z_{2}}{1-a_{22}(\lambda)z_{1}-a_{33}(\lambda)z_{2}+(a_{22}(\lambda)a_{33}(\lambda)-a_{23}(\lambda)a_{32}(\lambda))z_{1}z_{2}}$$
		and 
		$$f_2(\lambda,\textbf{z})=\frac{a_{31}(\lambda)+a_{21}(\lambda)a_{32}(\lambda)z_{1}-a_{22}(\lambda)a_{31}(\lambda)z_{1}}{1-a_{22}(\lambda)z_{1}-a_{33}(\lambda)z_{2}+(a_{22}(\lambda)a_{33}(\lambda)-a_{23}(\lambda)a_{32}(\lambda))z_{1}z_{2}}$$
		for all $z_1,z_2,\lambda \in \mathbb{D}$ and hence we have 
		\begin{equation} \label{NEX} N_{\Xi}^{(i)}(\lambda,\textbf{z},\mu,\textbf{w})=\overline{f_i(\mu,\textbf{w})} f_i(\lambda,\textbf{z})=N^{(i)}(\lambda,\textbf{z},\mu,\textbf{w}),\,\, i=1,2
		\end{equation}
		for all $\lambda,\mu \in \mathbb{D}\,\, \text{and} \,\,\textbf{z},\textbf{w}\in \mathbb{D}^{2}$. Furthermore, from \cite[Equation 2.10]{pal1}, we deduce that
		$$\mathcal{G}_{\Xi(\lambda)}(\textbf{z})=a_{11}(\lambda)+z_{1}f_{1}(\lambda,\textbf{z})a_{12}(\lambda)+z_{2}f_{2}(\lambda,\textbf{z}) a_{13}(\lambda)=g(\lambda,\textbf{z})$$
		for all $\lambda,\mu \in \mathbb{D}\,\, \text{and} \,\,\textbf{z},\textbf{w}\in \mathbb{D}^{2}$. Thus, we have 
		\begin{equation} \label{KNN} K_{(N^{(1)},N^{(2)},N^{(3)})}(\lambda,\textbf{z},\mu,\textbf{w})=\overline{g(\mu,\textbf{w})} g(\lambda,\textbf{z})=\overline{\mathcal{G}_{\Xi(\mu)}(\textbf{w})}\mathcal{G}_{\Xi(\lambda)}(\textbf{z})\end{equation} for all $\lambda,\mu \in \mathbb{D}\,\, \text{and} \,\,\textbf{z},\textbf{w}\in \mathbb{D}^{2}$. It implies from \cite[Proposition 2.11]{pal1} that 
		$$1-\overline{\mathcal{G}_{\Xi(\mu)}(\textbf{w})}\mathcal{G}_{\Xi(\lambda)}(\textbf{z})=(1-\bar{w}_1z_1)N_{\Xi}^{(1)}(\lambda,\textbf{z},\mu,\textbf{w})+(1-\bar{w}_2z_2)N_{\Xi}^{(2)}(\lambda,\textbf{z},\mu,\textbf{w})+(1-\bar{\mu}\lambda)N_{\Xi}^{(3)}(\lambda,\textbf{z},\mu,\textbf{w})$$
		and hence 
		\begin{align}\label{KNN1}1-K_{(N^{(1)},N^{(2)},N^{(3)})}(\lambda,\textbf{z},\mu,\textbf{w})\nonumber&=(1-\bar{w}_1z_1)N_{\Xi}^{(1)}(\lambda,\textbf{z},\mu,\textbf{w})+(1-\bar{w}_2z_2)N_{\Xi}^{(2)}(\lambda,\textbf{z},\mu,\textbf{w})\\ &+(1-\bar{\mu}\lambda)N_{\Xi}^{(3)}(\lambda,\textbf{z},\mu,\textbf{w})\end{align} for all $\lambda,\mu \in \mathbb{D}\,\, \text{and} \,\,\textbf{z},\textbf{w}\in \mathbb{D}^{2}$. By hypothesis, we observe that
		\begin{align}\label{KNN2}K_{(N^{(1)},N^{(2)},N^{(3)})}(\lambda,\textbf{z},\mu,\textbf{w})\nonumber&=1-(1-\bar{w}_1z_1)N^{(1)}(\lambda,\textbf{z},\mu,\textbf{w})-(1-\bar{w}_2z_2)N^{(2)}(\lambda,\textbf{z},\mu,\textbf{w})\\ &-(1-\bar{\mu}\lambda)N^{(3)}(\lambda,\textbf{z},\mu,\textbf{w})
		\end{align} for all $\lambda,\mu \in \mathbb{D}\,\, \text{and} \,\,\textbf{z},\textbf{w}\in \mathbb{D}^{2}$. Thus from \eqref{NEX},\eqref{KNN}, \eqref{KNN1} and \eqref{KNN2}, we conclude that  $N_{\Xi}^{(3)}(\lambda,\textbf{z},\mu,\textbf{w})=N^{(3)}(\lambda,\textbf{z},\mu,\textbf{w})$ for all $\lambda,\mu \in \mathbb{D}\,\, \text{and} \,\,\textbf{z},\textbf{w}\in \mathbb{D}^{2}$. This completes the proof.
	\end{proof}
	
	\begin{prop}
		Let $F\in \mathcal S_{1}(\mathbb C^3,\mathbb C^3)$ be such that $\left(\begin{array}{cc}
			F_{21}(\lambda)\\
			F_{31}(\lambda)
		\end{array}\right)\neq \bold{0}$. Then
		$$Upper\, W \circ Upper\,E(F)=\{\left[\begin{array}{ccc}
			\eta_1 & 0 & 0 \\
			0 & \eta_2 & 0 \\
			0 & 0 & \eta_3
		\end{array}\right] F\left[\begin{array}{ccc}
			1 & 0 & 0 \\
			0 & \bar{\eta_2} & 0 \\
			0 & 0 & \bar{\eta_3}
		\end{array}\right]:\eta_1,\eta_2,\eta_3 \in \mathbb{T}\}$$	
	\end{prop}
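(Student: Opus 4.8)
The plan is to show that $F$ is itself one of the functions delivered by the $UW$ procedure applied to $Upper\,E(F)$; granting this, both inclusions follow from the non-uniqueness already recorded in the proposition preceding Corollary \ref{cor}. Write $(N^{(1)},N^{(2)},N^{(3)}):=Upper\,E(F)=(N_F^{(1)},N_F^{(2)},N_F^{(3)})$. Since $\bigl(\begin{smallmatrix}F_{21}\\F_{31}\end{smallmatrix}\bigr)\neq\mathbf 0$, Proposition \ref{upper} shows that $N_F^{(1)},N_F^{(2)}$ and $K_{(N_F^{(1)},N_F^{(2)},N_F^{(3)})}=\overline{\mathcal G_{F(\mu)}(\mathbf w)}\,\mathcal G_{F(\lambda)}(\mathbf z)$ all have rank one, so $(N^{(1)},N^{(2)},N^{(3)})\in\tilde{R}_{11}$ and $Upper\,W$ is defined on it.

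The decisive step is a single algebraic identity. Choose the rank-one factorisations $f_i(\lambda,\mathbf z):=\tilde\gamma_i(\lambda,\mathbf z)$ for $i=1,2$ and $g(\lambda,\mathbf z):=\mathcal G_{F(\lambda)}(\mathbf z)$, which factorise $N_F^{(1)},N_F^{(2)}$ and $K$ respectively. Rewriting the defining relation of $\tilde\gamma$ in \eqref{gaa}, i.e. $(I_2-A(\lambda)X)\tilde\gamma=\bigl(\begin{smallmatrix}F_{21}\\F_{31}\end{smallmatrix}\bigr)$ with $A(\lambda)=\bigl(\begin{smallmatrix}F_{22}&F_{23}\\F_{32}&F_{33}\end{smallmatrix}\bigr)$ and $X=\operatorname{diag}(z_1,z_2)$, yields precisely the second and third coordinates of
$$F(\lambda)\begin{pmatrix}1\\ z_1\tilde\gamma_1(\lambda,\mathbf z)\\ z_2\tilde\gamma_2(\lambda,\mathbf z)\end{pmatrix}=\begin{pmatrix}\mathcal G_{F(\lambda)}(\mathbf z)\\ \tilde\gamma_1(\lambda,\mathbf z)\\ \tilde\gamma_2(\lambda,\mathbf z)\end{pmatrix},$$
whose first coordinate is exactly \eqref{mathcalGG} and whose input column is $\tilde\eta(\lambda,\mathbf z)$ from \eqref{ett}. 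Thus $F$ satisfies the defining equation of Theorem \ref{UW} for $(N^{(1)},N^{(2)},N^{(3)})$ with the data $f_1,f_2,g$. As $\tilde\gamma_1,\tilde\gamma_2\not\equiv 0$, the columns $\tilde\eta(\lambda,\mathbf z)$ span $\mathbb C^3$ as $\mathbf z$ varies (the spanning argument used at the close of the proposition preceding Corollary \ref{cor}), so any matrix obeying this relation is uniquely determined; hence the function produced by the $UW$ procedure from these factorisations is $F$, and $F\in Upper\,W(Upper\,E(F))$.

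Both inclusions now follow. If $\Xi\in Upper\,W(Upper\,E(F))$, then $\Xi$ and $F$ are both outputs of the $UW$ procedure for $(N^{(1)},N^{(2)},N^{(3)})$, so the proposition preceding Corollary \ref{cor} gives $\Xi=\operatorname{diag}(\eta_1,\eta_2,\eta_3)\,F\,\operatorname{diag}(1,\bar{\eta_2},\bar{\eta_3})$ for some $\eta_1,\eta_2,\eta_3\in\mathbb T$, proving the inclusion from left to right. For the reverse inclusion, fix $\eta_1,\eta_2,\eta_3\in\mathbb T$ and pass to the scaled factorisations $\hat f_i:=\eta_{i+1}f_i$ and $\hat g:=\eta_1 g$, which again factorise $N_F^{(i)}$ and $K$; a direct computation gives
$$\operatorname{diag}(\eta_1,\eta_2,\eta_3)\,F\,\operatorname{diag}(1,\bar{\eta_2},\bar{\eta_3})\begin{pmatrix}1\\ z_1\hat f_1\\ z_2\hat f_2\end{pmatrix}=\operatorname{diag}(\eta_1,\eta_2,\eta_3)\,F\begin{pmatrix}1\\ z_1 f_1\\ z_2 f_2\end{pmatrix}=\begin{pmatrix}\hat g\\ \hat f_1\\ \hat f_2\end{pmatrix},$$
so $\operatorname{diag}(\eta_1,\eta_2,\eta_3)\,F\,\operatorname{diag}(1,\bar{\eta_2},\bar{\eta_3})$ obeys the $UW$ relation with the hatted data and, being in $\mathcal S_1(\mathbb C^3,\mathbb C^3)$ by Corollary \ref{cor}, is the corresponding $UW$ output by the same uniqueness argument; hence it lies in $Upper\,W(Upper\,E(F))$. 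The only genuinely non-routine ingredient is the displayed identity of the second paragraph together with the spanning property of $\tilde\eta$; the remaining manipulations merely track the unimodular constants $\eta_1,\eta_2,\eta_3$.
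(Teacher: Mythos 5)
Your proposal is correct and follows essentially the same route as the paper: both hinge on the identity $F(\lambda)\tilde\eta(\lambda,\mathbf{z})=\bigl(\mathcal G_{F(\lambda)}(\mathbf{z}),\tilde\gamma_1(\lambda,\mathbf{z}),\tilde\gamma_2(\lambda,\mathbf{z})\bigr)^{T}$ (the paper's equation \eqref{25}), the spanning/isolated-zeros argument to pin down the $UW$ output as $F$ itself, and the non-uniqueness proposition together with Corollary \ref{cor} to describe the full set. If anything, your treatment of the reverse inclusion (exhibiting each diagonal modification as a genuine $UW$ output via the scaled factorisations $\hat f_i$, $\hat g$) is spelled out more explicitly than in the paper, which leaves that direction implicit in its appeal to Corollary \ref{cor}.
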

	\begin{proof}
		Suppose that $F=((F_{ij}))_{i,j=1}^{3}\in \mathcal S_{1}(\mathbb C^3,\mathbb C^3).$ Then by the definition of upper map we get $UpperE(F)=(N^{(1)}_F,N^{(2)}_F,N^{(3)}_F),$ where $$N_{F}^{(i)}(\lambda,\textbf{z},\mu,\textbf{w})=\overline{f_i(\mu,\textbf{w})} f_i(\lambda,\textbf{z}),i=1,2$$
		and 
		$$N_{F}^{(3)}(\lambda,\textbf{z},\mu,\textbf{w})=\left[ 1~~\bar{w}_{1}\overline{f_{1}(\mu,\textbf{w})}~~\bar{w}_{2}\overline{f_{2}(\mu,\textbf{w})}\right]\frac{I-F(\mu)^{*}F(\lambda)}{1-\bar{\mu}\lambda}\left[\begin{array}{ccc}
			1\\ z_{1}f_{1}(\lambda,\textbf{z}) \\ z_{2}f_{2}(\lambda,\textbf{z})
		\end{array}\right]$$
		with
		$$f_{1}(\lambda,\textbf{z})=\frac{F_{21}(\lambda)-F_{21}(\lambda)F_{33}(\lambda)z_{2}+F_{23}(\lambda)F_{31}(\lambda)z_{2}}{1-F_{22}(\lambda)z_{1}-F_{33}(\lambda)z_{2}+(F_{22}(\lambda)F_{33}(\lambda)-F_{23}(\lambda)F_{32}(\lambda))z_{1}z_{2}}$$
		and 
		$$f_2(\lambda,\textbf{z})=\frac{F_{31}(\lambda)+F_{21}(\lambda)F_{32}(\lambda)z_{1}-F_{22}(\lambda)F_{31}(\lambda)z_{1}}{1-F_{22}(\lambda)z_{1}-F_{33}(\lambda)z_{2}+(F_{22}(\lambda)F_{33}(\lambda)-F_{23}(\lambda)F_{32}(\lambda))z_{1}z_{2}}$$
		for all $\lambda,\mu \in \mathbb{D}\,\, \text{and} \,\,\textbf{z},\textbf{w}\in \mathbb{D}^{2}$. By \cite[Proposition 2.11]{pal1}, it yields that
		$$1-\overline{\mathcal{G}_{F(\mu)}(\textbf{w})}\mathcal{G}_{F(\lambda)}(\textbf{z})=(1-\bar{w}_1z_1)N_{F}^{(1)}(\lambda,\textbf{z},\mu,\textbf{w})+(1-\bar{w}_2z_2)N_{F}^{(2)}(\lambda,\textbf{z},\mu,\textbf{w})+(1-\bar{\mu}\lambda)N_{F}^{(3)}(\lambda,\textbf{z},\mu,\textbf{w})$$ and hence we have
		\begin{equation}
			\begin{aligned}
				K_{(N_F^{(1)},N_F^{(2)},N_F^{(3)})}(\textbf{z},\lambda,\textbf{w},\mu)&=1-(1-\bar{w}_1z_1)N_{F}^{(1)}(\lambda,\textbf{z},\mu,\textbf{w})+(1-\bar{w}_2z_2)N_{F}^{(2)}(\lambda,\textbf{z},\mu,\textbf{w})\\&+(1-\bar{\mu}\lambda)N_{F}^{(3)}(\lambda,\textbf{z},\mu,\textbf{w})\\
				&= \overline{\mathcal{G}_{F(\mu)}(\textbf{w})}\mathcal{G}_{F(\lambda)}(\textbf{z})
			\end{aligned}
		\end{equation}
		for all $\lambda,\mu \in \mathbb{D}\,\, \text{and} \,\,\textbf{z},\textbf{w}\in \mathbb{D}^{2}$. By applying Procedure $UW$ we can construct a function $\Xi\in \mathcal S_{1}(\mathbb C^3,\mathbb C^3)$ such that
		\begin{equation}\label{24}
			\Xi(\lambda)\left(\begin{array}{ccc}
				1\\ z_{1}f_{1}(\lambda,\textbf{z}) \\ z_{2}f_{2}(\lambda,\textbf{z})
			\end{array}\right)=\left(\begin{array}{ccc}
				\mathcal{G}_{F(\lambda)}(\textbf{z})\\ f_{1}(\lambda,\textbf{z}) \\ f_{2}(\lambda,\textbf{z})
			\end{array}\right).
		\end{equation}
		for all $\lambda,\mu \in \mathbb{D}\,\, \text{and} \,\,\textbf{z},\textbf{w}\in \mathbb{D}^{2}$.		
		We also notice that 
		\begin{equation}\label{25}
			\begin{aligned}
				F(\lambda)\left(\begin{array}{ccc}
					1\\ z_{1}f_{1}(\lambda,\textbf{z}) \\ z_{2}f_{2}(\lambda,\textbf{z})
				\end{array}\right)&=\left[\begin{array}{ccc}
					F_{11}(\lambda) & F_{12}(\lambda) & F_{13}(\lambda) \\
					F_{21}(\lambda) & F_{22}(\lambda) & F_{23}(\lambda) \\
					F_{31}(\lambda) & F_{32}(\lambda) & F_{33}(\lambda)
				\end{array}\right]\left(\begin{array}{ccc}
					1\\ z_{1}f_{1}(\lambda,\textbf{z}) \\ z_{2}f_{2}(\lambda,\textbf{z})
				\end{array}\right)\\
				&=\left(\begin{array}{ccc}
					\mathcal{G}_{F(\lambda)}(\textbf{z})\\ f_{1}(\lambda,\textbf{z}) \\ f_{2}(\lambda,\textbf{z})
				\end{array}\right)
			\end{aligned}
		\end{equation} 
		for all $\lambda,\mu \in \mathbb{D}\,\, \text{and} \,\,\textbf{z},\textbf{w}\in \mathbb{D}^{2}$. From equation \eqref{24} and \eqref{25} we see that $$(\Xi(\lambda)-F(\lambda))\left(\begin{array}{ccc}
			1\\ z_{1}f_{1}(\lambda,\textbf{z}) \\ z_{2}f_{2}(\lambda,\textbf{z})
		\end{array}\right)=0$$
		for all $\lambda,\mu \in \mathbb{D}\,\, \text{and} \,\,\textbf{z},\textbf{w}\in \mathbb{D}^{2}$.
		As  $\left(\begin{array}{cc}
			F_{21}\\
			F_{31}
		\end{array}\right)$ is  a nonzero analytic function on $\mathbb D,$ the zeros of 	$F_{21}$ and $F_{31}$ are isolated on $\mathbb D.$ Thus $\Xi(\lambda)=F(\lambda)$ for every $\lambda\in \mathbb{D}$. We obtain from corollary \ref{cor} that  $$Upper~ W(N^{(1)}_F,N^{(2)}_F,N^{(3)}_F)=\{\left[\begin{array}{ccc}
			\eta_1 & 0 & 0 \\
			0 & \eta_2 & 0 \\
			0 & 0 & \eta_3
		\end{array}\right] \Xi\left[\begin{array}{ccc}
			1 & 0 & 0 \\
			0 & \bar{\eta_2} & 0 \\
			0 & 0 & \bar{\eta_3}
		\end{array}\right]:\eta_1,\eta_2,\eta_3 \in \mathbb{T}\}.$$ Therefore, we have 
		\begin{align*}
			Upper\, W \circ Upper\,E(F)&= Upper~W(N^{(1)}_F,N^{(2)}_F,N^{(3)}_F)\\ &=\{\left[\begin{array}{ccc}
				\eta_1 & 0 & 0 \\
				0 & \eta_2 & 0 \\
				0 & 0 & \eta_3
			\end{array}\right] F\left[\begin{array}{ccc}
				1 & 0 & 0 \\
				0 & \bar{\eta_2} & 0 \\
				0 & 0 & \bar{\eta_3}
			\end{array}\right]:\eta_1,\eta_2,\eta_3 \in \mathbb{T}\}.
		\end{align*}
		This completes the proof.
	\end{proof}
	\begin{defn}
		The map $Right ~S$ is the set valued map from $\tilde{\mathcal{R}_1}$ to $\mathcal S_{3}(\mathbb C,\mathbb C)$, that is, $Right~S:\tilde{\mathcal{R}_1}\rightarrow \mathcal S_{3}(\mathbb C,\mathbb C)$ is given by 
		$$Right\,\,S(N^{(1)},N^{(2)},N^{(3)})=\{f\in \mathcal S_{3}(\mathbb C,\mathbb C):K_{(N^{(1)},N^{(2)},N^{(3)})}(\lambda,\textbf{z},\mu,\textbf{w})=\overline{f(\textbf{w},\mu)}f(\textbf{z},\lambda),\,\, \textbf{z},\textbf{w}\in \mathbb{D}^{2}\mu,\lambda\in \mathbb{D}\}$$ for every $(N^{(1)},N^{(2)},N^{(3)})\in \tilde{\mathcal{R}_1}$.
	\end{defn}
	
	\begin{prop}
		Let $(N^{(1)},N^{(2)},N^{(3)})\in \tilde{\mathcal{R}_1}.$ Then the map $\text{Right ~S}$ is well defined and
		$$\text{Right ~S}(N^{(1)},N^{(2)},N^{(3)})=\{\eta f:\eta\in \mathbb{T}\},$$
		where $f:\mathbb{D}^3\rightarrow\mathbb{C}$ is analytic and satisfies 
		$$K_{(N^{(1)},N^{(2)},N^{(3)})}(\lambda,\textbf{z},\mu,\textbf{w})=\overline{f(\textbf{w},\mu)}f(\textbf{z},\lambda)$$ for all $\mu,\lambda~\in \mathbb{D}$ and $\textbf{z},\textbf{w}\in \mathbb{D}^{2}.$
	\end{prop}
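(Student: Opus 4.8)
The plan is to extract everything from the single defining feature of $\tilde{\mathcal{R}_1}$, namely that $K := K_{(N^{(1)},N^{(2)},N^{(3)})}$ is an analytic kernel of rank one on $\mathbb{D}^3$. A positive kernel of rank one is exactly one whose reproducing kernel Hilbert space $\mathcal{H}_K$ is one-dimensional. First I would invoke the standard rank-one factorisation: choose a nonzero $f\in\mathcal{H}_K$ normalised so that $\langle f,f\rangle_{\mathcal{H}_K}=1$. Since $\dim\mathcal{H}_K=1$, the section $K(\lambda,\textbf{z},\mu,\textbf{w})$, viewed as a function of $(\textbf{z},\lambda)$, is a scalar multiple of $f$; the reproducing property applied to $f$ pins that scalar down to $\overline{f(\textbf{w},\mu)}$, giving
$$K_{(N^{(1)},N^{(2)},N^{(3)})}(\lambda,\textbf{z},\mu,\textbf{w})=\overline{f(\textbf{w},\mu)}\,f(\textbf{z},\lambda)$$
for all $\lambda,\mu\in\mathbb{D}$ and $\textbf{z},\textbf{w}\in\mathbb{D}^{2}$. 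Because $K$ is an \emph{analytic} kernel, every element of $\mathcal{H}_K$, and in particular $f$, is holomorphic on $\mathbb{D}^3$. This produces a candidate factoriser.

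Next I would verify that $f\in\mathcal{S}_{3}(\mathbb C,\mathbb C)=\mathcal{O}(\mathbb{D}^3,\mathbb{D})$, which is what makes $\text{Right}~S$ actually land in the prescribed codomain. Specialising \eqref{kernel} to the diagonal $\textbf{w}=\textbf{z}$, $\mu=\lambda$ gives
$$|f(\textbf{z},\lambda)|^2=K(\textbf{z},\lambda,\textbf{z},\lambda)=1-(1-|z_1|^2)N^{(1)}(\textbf{z},\lambda,\textbf{z},\lambda)-(1-|z_2|^2)N^{(2)}(\textbf{z},\lambda,\textbf{z},\lambda)-(1-|\lambda|^2)N^{(3)}(\textbf{z},\lambda,\textbf{z},\lambda).$$
Each $N^{(i)}$ is a positive analytic kernel, so its diagonal values are nonnegative, while $1-|z_1|^2,\,1-|z_2|^2,\,1-|\lambda|^2\ge 0$ on $\mathbb{D}$; hence the three subtracted terms are nonnegative and $|f(\textbf{z},\lambda)|^2\le 1$. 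Combined with holomorphy, this yields $f\in\mathcal{O}(\mathbb{D}^3,\overline{\mathbb{D}})$, so the right-hand set is nonempty and sits inside $\mathcal{S}_{3}(\mathbb C,\mathbb C)$.

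Finally I would establish uniqueness up to a unimodular constant. Suppose $\tilde f\in\mathcal{S}_{3}(\mathbb C,\mathbb C)$ is a second factoriser, so that $\overline{\tilde f(\textbf{w},\mu)}\,\tilde f(\textbf{z},\lambda)=K(\lambda,\textbf{z},\mu,\textbf{w})=\overline{f(\textbf{w},\mu)}\,f(\textbf{z},\lambda)$. As the rank is one, $f\not\equiv 0$, so fix a base point $p_0=(\textbf{z}_0,\lambda_0)\in\mathbb{D}^3$ with $f(p_0)\ne 0$; evaluating on the diagonal gives $|\tilde f(p_0)|^2=K(p_0,p_0)=|f(p_0)|^2\ne 0$. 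Freezing the second argument at $p_0$ and comparing the two factorisations pointwise yields $\tilde f(\textbf{z},\lambda)=\eta\,f(\textbf{z},\lambda)$ with $\eta=\overline{f(p_0)}/\overline{\tilde f(p_0)}$, and the equality of moduli at $p_0$ forces $|\eta|=1$. Conversely, $\eta f$ factorises $K$ for every $\eta\in\mathbb{T}$, so $\text{Right}~S(N^{(1)},N^{(2)},N^{(3)})=\{\eta f:\eta\in\mathbb{T}\}$, proving both well-definedness and the stated description. The one step I would be most careful about is the Schur-class bound: it rests on reading each $N^{(i)}$ as a \emph{positive} analytic kernel so that the diagonal of \eqref{kernel} is $\le 1$; the rest is the textbook rank-one factorisation of a reproducing kernel together with an elementary pointwise-ratio argument.
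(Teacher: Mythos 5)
Your proof is correct and follows essentially the same route as the paper's: factor the rank-one analytic kernel $K_{(N^{(1)},N^{(2)},N^{(3)})}$ as $\overline{f(\textbf{w},\mu)}f(\textbf{z},\lambda)$, use positivity of the kernels $N^{(i)}$ in \eqref{kernel} to conclude $|f|\le 1$, and observe that any two factorisers differ by a unimodular constant while every $\eta f$, $\eta\in\mathbb{T}$, is again a factoriser. If anything you are more careful than the paper, which writes the inequality $1-K\ge 0$ at off-diagonal points instead of restricting to the diagonal as you do, and which asserts both the rank-one factorisation and the uniqueness up to $\eta\in\mathbb{T}$ without the reproducing-kernel and base-point arguments you supply.
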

	\begin{proof}
		Suppose that $(N^{(1)},N^{(2)},N^{(3)})\in \tilde{\mathcal{R}_1}$. Then  $K_{(N^{(1)},N^{(2)},N^{(3)})}(\lambda,\textbf{z},\mu,\textbf{w})$ is an analytic kernel on $\mathbb D^3$ of rank $1.$ As a result, there exist an analytic function $f:\mathbb{D}^{3}\rightarrow\mathbb{C}$ such that 
		$$K_{(N^{(1)},N^{(2)},N^{(3)})}(\lambda,\textbf{z},\mu,\textbf{w})=\overline{f(\textbf{w},\mu)}f(\textbf{z},\lambda)$$ for all $\mu,\lambda~\in \mathbb{D}$ and $\textbf{z},\textbf{w}\in \mathbb{D}^{2}.$ Furthermore, if $g:\mathbb{D}^{3}\rightarrow\mathbb{C}$ is the analytic function which satisfies the following relation   
		$$K_{(N^{(1)},N^{(2)},N^{(3)})}(\lambda,\textbf{z},\mu,\textbf{w})=\overline{g(\textbf{w},\mu)}g(\textbf{z},\lambda)$$ for all $\mu,\lambda~\in \mathbb{D}$ and $\textbf{z},\textbf{w}\in \mathbb{D}^{2},$ then we have $g=\eta f$ for some $\eta \in \mathbb{T}.$ Observe that 
		\begin{align}\label{KNN12}1-K_{(N^{(1)},N^{(2)},N^{(3)})}(\lambda,\textbf{z},\mu,\textbf{w})\nonumber&=(1-\bar{w_{1}}z_{1})N^{(1)}(\textbf{z},\lambda,\textbf{w},\mu)-(1-\bar{w_{2}}z_{2})N^{(2)}(\textbf{z},\lambda,\textbf{w},\mu)\\\nonumber &-(1-\bar{\mu}\lambda)N^{(3)}(\textbf{z},\lambda,\textbf{w},\mu)\\ &\geq0\end{align} for all $\mu,\lambda~\in \mathbb{D}$ and $\textbf{z},\textbf{w}\in \mathbb{D}^{2}.$ Therefore, from \eqref{KNN12} it follows that $$1-\overline{f(\textbf{w},\mu)}f(\textbf{z},\lambda)=1-K_{(N^{(1)},N^{(2)},N^{(3)})}(\lambda,\textbf{z},\mu,\textbf{w})\geq0$$ for all $\mu,\lambda~\in \mathbb{D}$ and $\textbf{z},\textbf{w}\in \mathbb{D}^{2}.$ So, $|f(\textbf{w},\mu)|\leq1$ for all $\textbf{z}\in \mathbb{D}^{2}$ and $\lambda\in \mathbb{D}$. This shows that $f\in \mathcal S_{3}(\mathbb C,\mathbb C)$ and hence the $\text{Right ~S}$ is well defined. This completes the proof.
	\end{proof}
	Now we  discuss the relation of the map Right S and other maps in the rich structure.
	\begin{prop}\label{comp}
		Assume that $F\in \mathcal S_{1}(\mathbb C^3,\mathbb C^3)$. Then 
		$$Right\,S \circ Upper\,E(F)=\{\eta SE(F):\eta \in \mathbb{T}\}$$
	\end{prop}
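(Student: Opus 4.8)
The plan is to reduce everything to the rank-one factorization of the kernel $K_{(N_F^{(1)},N_F^{(2)},N_F^{(3)})}$ that is already in hand, and to observe that $SE(F)$ is itself a factorizing function for it. First I would write $Upper\,E(F)=(N_F^{(1)},N_F^{(2)},N_F^{(3)})$ by definition, and recall the factorization of the associated kernel: in the generic branch $\left(\begin{smallmatrix}F_{21}(\lambda)\\F_{31}(\lambda)\end{smallmatrix}\right)\neq\mathbf 0$ this is the identity established in the proof of Proposition~\ref{upper}, while in the degenerate branch $\left(\begin{smallmatrix}F_{21}(\lambda)\\F_{31}(\lambda)\end{smallmatrix}\right)=\mathbf 0$ it is supplied by Proposition~\ref{upper1} together with the fact, noted in the Remark after the definition of $SE$, that $\mathcal G_{F(\lambda)}(\textbf z)=F_{11}(\lambda)$ there. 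In either case one obtains, for all $\lambda,\mu\in\mathbb D$ and $\textbf z,\textbf w\in\mathbb D^2$,
$$K_{(N_F^{(1)},N_F^{(2)},N_F^{(3)})}(\textbf z,\lambda,\textbf w,\mu)=\overline{\mathcal G_{F(\mu)}(\textbf w)}\,\mathcal G_{F(\lambda)}(\textbf z).$$

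Second I would invoke the definition $SE(F)(\textbf z,\lambda)=-\mathcal G_{F(\lambda)}(\textbf z)$; since the two sign factors combine as $\overline{(-1)}(-1)=1$, the factorization rewrites as
$$K_{(N_F^{(1)},N_F^{(2)},N_F^{(3)})}(\textbf z,\lambda,\textbf w,\mu)=\overline{SE(F)(\textbf w,\mu)}\,SE(F)(\textbf z,\lambda).$$
Because $SE$ is well defined, $SE(F)\in\mathcal S_3(\mathbb C,\mathbb C)$, so $SE(F)$ is an admissible analytic function realizing the rank-one factorization of $K_{(N_F^{(1)},N_F^{(2)},N_F^{(3)})}$.

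Finally I would apply the proposition characterizing $Right\,S$, which says that $Right\,S(N^{(1)},N^{(2)},N^{(3)})=\{\eta f:\eta\in\mathbb T\}$ for any analytic $f$ realizing the factorization, and which guarantees that any two such factorizing functions differ by a unimodular constant. Taking $f=SE(F)$ then gives
$$Right\,S\circ Upper\,E(F)=\{\eta\,SE(F):\eta\in\mathbb T\},$$
as claimed. The only delicate point is purely bookkeeping: one must check that the factorization $K=\overline{\mathcal G_{F(\mu)}(\textbf w)}\mathcal G_{F(\lambda)}(\textbf z)$ holds uniformly across the generic and degenerate cases and that the argument orderings in the definitions of $K$, $SE$ and $Right\,S$ line up, so that the unbarred slot $(\textbf z,\lambda)$ is consistently matched with $\mathcal G_{F(\lambda)}(\textbf z)$. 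No new analytic estimate is required: the substantive identity relating $K$ to $\mathcal G_F$ via \cite[Proposition 2.11]{pal1} was already carried out in Propositions~\ref{upper} and~\ref{upper1}.
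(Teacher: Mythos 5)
Your proposal is correct and takes essentially the same approach as the paper: both rest on the rank-one factorization $K_{(N_F^{(1)},N_F^{(2)},N_F^{(3)})}(\textbf{z},\lambda,\textbf{w},\mu)=\overline{\mathcal G_{F(\mu)}(\textbf{w})}\,\mathcal G_{F(\lambda)}(\textbf{z})$ supplied by Propositions \ref{upper} and \ref{upper1}, the cancellation of the sign in $SE(F)=-\mathcal G_{F}$, and the characterization of $Right\,S(N^{(1)},N^{(2)},N^{(3)})$ as $\{\eta f:\eta\in\mathbb{T}\}$ for any analytic function $f$ realizing the factorization. Your write-up merely spells out the degenerate branch $F_{21}=F_{31}=0$ (via the Remark and Proposition \ref{upper12}'s counterpart \ref{upper1}), which the paper's terser proof subsumes by citing the same propositions.
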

	\begin{proof}
		Assume that $(N^{(1)}_{F},N^{(2)}_{F},N^{(3)}_{F})\in \tilde{R}_{1}$. By using the Proposition \eqref{upper} and \eqref{upper1} along with the definition of $Upper~E$ map we have  $$K_{(N_F^{(1)},N_F^{(2)},N_F^{(3)})}(\textbf{z},\lambda,\textbf{w},\mu)=\overline{\mathcal G_{F(\mu)}(\textbf{w})}\mathcal G_{F(\lambda)}(\textbf{z})=(\overline{-\mathcal G_{F(\mu)}(\textbf{w})})(-\mathcal G_{F(\lambda)}(\textbf{z}))$$ for all $\mu,\lambda~\in \mathbb{D}$ and $\textbf{z},\textbf{w}\in \mathbb{D}^{2}.$ Also by the definition of $SE$ map we deduce that  $$Right\,S \circ Upper\,E(F)=Right~S(N_F^{(1)},N_F^{(2)},N_F^{(3)})=\{\eta \mathcal G_{F(\lambda)}(\textbf{z}):\eta \in \mathbb{T}\}=\{\eta SE(F):\eta \in \mathbb{T}\}.$$ This completes the proof.
	\end{proof}
	\begin{prop}
		Assume that $(N^{(1)},N^{(2)},N^{(3)})\in \tilde{R}_{11}$. Then 
		$$Right\,S(N^{(1)},N^{(2)},N^{(3)})=\{SE(F):\,\,F \in Upper \,W(N^{(1)},N^{(2)},N^{(3)})\}$$
	\end{prop}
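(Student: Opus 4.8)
The plan is to prove the asserted set equality by a double inclusion, the bridge between the two sides being the top coordinate of the $UW$ relation. Recall that if $F \in Upper\,W(N^{(1)},N^{(2)},N^{(3)})$, then by Theorem \ref{UW} there are functions $f_1 \in \mathcal{H}_{N^{(1)}}$, $f_2 \in \mathcal{H}_{N^{(2)}}$ and $g \in H_{K_{(N^{(1)},N^{(2)},N^{(3)})}}$ with $N^{(i)}(\lambda,\textbf{z},\mu,\textbf{w}) = \overline{f_i(\mu,\textbf{w})} f_i(\lambda,\textbf{z})$ for $i=1,2$, with $K_{(N^{(1)},N^{(2)},N^{(3)})}(\lambda,\textbf{z},\mu,\textbf{w}) = \overline{g(\mu,\textbf{w})} g(\lambda,\textbf{z})$, and with $F(\lambda)$ carrying $(1,\,z_1 f_1(\lambda,\textbf{z}),\,z_2 f_2(\lambda,\textbf{z}))$ to $(g(\lambda,\textbf{z}),\,f_1(\lambda,\textbf{z}),\,f_2(\lambda,\textbf{z}))$. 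Reading off the top coordinate exactly as in the proof of Proposition \ref{comp1} (via \cite[Equation 2.10]{pal1}) gives the crucial identity $\mathcal{G}_{F(\lambda)}(\textbf{z}) = g(\lambda,\textbf{z})$, and hence $SE(F)(\textbf{z},\lambda) = -\mathcal{G}_{F(\lambda)}(\textbf{z}) = -g(\lambda,\textbf{z})$.

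For the inclusion $\{SE(F) : F \in Upper\,W(N^{(1)},N^{(2)},N^{(3)})\} \subseteq Right\,S(N^{(1)},N^{(2)},N^{(3)})$, I would fix $F$ together with its data $f_1,f_2,g$ as above and substitute. Since $SE(F)$ already lies in $\mathcal{S}_3(\mathbb C,\mathbb C)$ by well-definedness of $SE$, it remains only to verify the kernel equation defining $Right\,S$. Writing $h := SE(F)$, so that $h(\textbf{z},\lambda) = -g(\lambda,\textbf{z})$, one computes
$$\overline{h(\textbf{w},\mu)}\,h(\textbf{z},\lambda) = \overline{-g(\mu,\textbf{w})}\,(-g(\lambda,\textbf{z})) = \overline{g(\mu,\textbf{w})}\,g(\lambda,\textbf{z}) = K_{(N^{(1)},N^{(2)},N^{(3)})}(\lambda,\textbf{z},\mu,\textbf{w}),$$
which is precisely the membership condition, whence $SE(F) \in Right\,S(N^{(1)},N^{(2)},N^{(3)})$.

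For the reverse inclusion I would begin with an arbitrary $h \in Right\,S(N^{(1)},N^{(2)},N^{(3)})$; by the preceding proposition $h = \eta f$ for some $\eta \in \mathbb{T}$, and in particular $K_{(N^{(1)},N^{(2)},N^{(3)})}(\lambda,\textbf{z},\mu,\textbf{w}) = \overline{h(\textbf{w},\mu)}\,h(\textbf{z},\lambda)$. The idea is to run the $UW$ procedure with the prescribed square root $g(\lambda,\textbf{z}) := -h(\textbf{z},\lambda)$; this is a legitimate choice because $\overline{g(\mu,\textbf{w})}\,g(\lambda,\textbf{z}) = K_{(N^{(1)},N^{(2)},N^{(3)})}$, so $g$ represents the rank-one kernel $K_{(N^{(1)},N^{(2)},N^{(3)})}$. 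Choosing any rank-one representatives $f_1 \in \mathcal{H}_{N^{(1)}}$ and $f_2 \in \mathcal{H}_{N^{(2)}}$ (available since $(N^{(1)},N^{(2)},N^{(3)}) \in \tilde{R}_{11}$), Theorem \ref{UW} produces a function $F \in \mathcal{S}_1(\mathbb C^3,\mathbb C^3)$, lying in $Upper\,W(N^{(1)},N^{(2)},N^{(3)})$ by definition and satisfying $\mathcal{G}_{F(\lambda)}(\textbf{z}) = g(\lambda,\textbf{z})$. Then $SE(F)(\textbf{z},\lambda) = -g(\lambda,\textbf{z}) = h(\textbf{z},\lambda)$, i.e. $SE(F) = h$, giving the inclusion.

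The main point to argue carefully is that the $UW$ procedure really may be executed with the externally prescribed representative $g = -h$ rather than a canonical one. This is where I would invoke the non-uniqueness analysis: the lurking-isometry construction in Theorem \ref{UW} depends only on the Gram identity \eqref{eq17}, which is an algebraic consequence of \eqref{kernel} and is insensitive to replacing $g,f_1,f_2$ by unimodular multiples; thus every admissible square root $g$ — in particular $g = -h$ — yields a legitimate $F \in Upper\,W(N^{(1)},N^{(2)},N^{(3)})$. The only remaining bookkeeping is the sign and the swap of arguments $(\textbf{z},\lambda) \leftrightarrow (\lambda,\textbf{z})$ between the $SE$ convention and the kernel-factorization convention of $Right\,S$, which the computations above track; matching the unimodular freedom in $g$ with the description $Right\,S = \{\eta f : \eta \in \mathbb{T}\}$ then closes both inclusions.
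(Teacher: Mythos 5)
Your proof is correct, and it takes a genuinely different route from the paper's. The paper argues by composing its previously established results: it fixes a single output $A$ of the $UW$ procedure, invokes Proposition \ref{comp1} to get $Upper\,E(A)=(N^{(1)},N^{(2)},N^{(3)})$, hence $Right\,S(N^{(1)},N^{(2)},N^{(3)})=Right\,S\circ Upper\,E(A)$, which Proposition \ref{comp} identifies as $\{\eta\, SE(A):\eta\in\mathbb T\}$; separately, it uses the unimodular-twist description of $Upper\,W(N^{(1)},N^{(2)},N^{(3)})$ coming from the non-uniqueness proposition and Corollary \ref{cor}, together with the computed equivariance $SE\bigl(\operatorname{diag}(\eta_1,\eta_2,\eta_3)\,A\,\operatorname{diag}(1,\bar\eta_2,\bar\eta_3)\bigr)=\eta_1\,SE(A)$, to show the right-hand side is the same $\mathbb T$-orbit $\{\eta\,SE(A)\}$. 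You instead prove the two inclusions directly: forward, you read $\mathcal G_{F(\lambda)}(\textbf{z})=g(\lambda,\textbf{z})$ off the top row of the $UW$ intertwining relation (borrowing the computation inside Proposition \ref{comp1}) so that $SE(F)=-g$ factors $K_{(N^{(1)},N^{(2)},N^{(3)})}$ and hence lies in $Right\,S$; backward, given $h\in Right\,S$ you rerun the $UW$ procedure with the externally prescribed square root $g=-h$, which is legitimate exactly as you say, because the lurking-isometry argument in Theorem \ref{UW} uses only the Gram identity and accepts any admissible factorization data, and because for a rank-one kernel every square root is a unimodular multiple of any other, so $g\in H_{K_{(N^{(1)},N^{(2)},N^{(3)})}}$. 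What the paper's route buys is brevity given the earlier machinery and an explicit $\mathbb T$-parametrization of both sides as a common orbit; what your route buys is independence from the twist classification (Corollary \ref{cor} and the equivariance computation are never needed) and a constructive surjectivity argument that exhibits, for each $h\in Right\,S$, a concrete preimage $F\in Upper\,W$ with $SE(F)=h$. Both arguments ultimately rest on the same core identity $\mathcal G_F=g$ for outputs of the $UW$ procedure.
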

	\begin{proof}
		Let $(N^{(1)},N^{(2)},N^{(3)})\in \tilde{R}_{11}$ and let $A\in \mathcal S_{1}(\mathbb C^3,\mathbb C^3)$ be the map constructed through the procedure of $UW$ for $(N^{(1)},N^{(2)},N^{(3)}).$ Then $$Upper~W(N^{(1)},N^{(2)},N^{(3)})=\{\left[\begin{array}{ccc}
			\eta_1 & 0 & 0 \\
			0 & \eta_2 & 0 \\
			0 & 0 & \eta_3
		\end{array}\right] A\left[\begin{array}{ccc}
			1 & 0 & 0 \\
			0 & \bar{\eta_2} & 0 \\
			0 & 0 & \bar{\eta_3}
		\end{array}\right]:\eta_1,\eta_2,\eta_3 \in \mathbb{T}\}$$
		Note that 		
		\begin{equation}
			\begin{aligned}
				SE\left(\left[\begin{array}{ccc}
					\eta_1 & 0 & 0 \\
					0 & \eta_2 & 0 \\
					0 & 0 & \eta_3
				\end{array}\right] A\left[\begin{array}{ccc}
					1 & 0 & 0 \\
					0 & \bar{\eta_2} & 0 \\
					0 & 0 & \bar{\eta_3}
				\end{array}\right]\right)(\textbf{z},\lambda)=&SE\left(\left[\begin{array}{ccc}
					\eta_1a_{11} & \eta_1\bar{\eta_2}a_{12} & \eta_1\bar{\eta_3}a_{13} \\
					\eta_2a_{21} & a_{22} & \eta_2\bar{\eta_3}a_{23} \\
					\eta_3a_{31} & \eta_3\bar{\eta_2}a_{32} & a_{33}
				\end{array}\right]\right)(\textbf{z},\lambda)\\
				=&\eta_1 SE(A)(\textbf{z},\lambda)	
			\end{aligned}
		\end{equation}
		for all $\lambda~\in \mathbb{D}$, $\textbf{z}\in \mathbb{D}^{2}$ and all $\eta_1,\eta_2,\eta_3\in \mathbb{T}$. Therefore, $$\{SE(F):~F\in Upper~W(N,M)\}=\{\eta SE(A):~\eta \in \mathbb{T}\}$$ It follows from Proposition \eqref{comp} and \eqref{comp1} that
		\begin{align}Right~S(N^{(1)},N^{(2)},N^{(3)})\nonumber&=Right\,S \circ Upper\,E(A)=\{\eta_1 SE(A):\eta_1 \in \mathbb{T}\}\\\nonumber&=\{SE(F):\,\,F \in Upper \,W(N^{(1)},N^{(2)},N^{(3)})\}.\end{align} This completes the proof.
	\end{proof}
	\newpage
	\section{Matricial Nevanlinna-Pick Problem}
	In this section, we show how the interpolation problems for $G_{E(3;3;1,1,1)}$ and $G_{E(3;2;1,2)}$ can be reduced to a standard matricial Nevanlinna-Pick problem.
			\begin{thm}\label{nevalina}
				Suppose ${\bf{x}}=(x_1,\ldots,x_7)\in \mathcal O(\mathbb D,\Gamma_{E(3;3;1,1,1)}).$ Then there exists a unique function  \begin{equation}\label{Fz12}\mathcal F^{(z_2)}{(\lambda)}=((F_{ij}^{(z_2)}(\lambda)))_{i,j=1}^{2}\in \mathcal S_{1}(\mathbb C^2,\mathbb C^2)\end{equation}  such that 
				\begin{equation}\label{Fz2} F^{(z_2)}_{11}(\lambda)=\frac{x_1(\lambda)-z_2x_3(\lambda)}{1-x_2(\lambda)z_2},F^{(z_2)}_{22}(\lambda)=\frac{x_4(\lambda)-z_2x_6(\lambda)}{1-x_2(\lambda)z_2}~{\rm{ and }}~\det(\mathcal{F}^{(z_2)}{(\lambda)})=\frac{x_5(\lambda)-z_2x_7(\lambda)}{1-x_2(\lambda)z_2}\end{equation} for all $z_2\in \mathbb D$
				and $\sup_{|z_2|<1}|F_{12}^{(z_2)}(\lambda)|=\sup_{|z_2|< 1}|F_{21}^{(z_2)}(\lambda)|$ almost everywhere on $\mathbb T$ and for fixed but arbitrary $z_2\in \mathbb D$ $F_{21}^{(z_2)}$ is either $0$ or outer, and $F_{21}^{(z_2)}(0)\geq 0.$ Moreover, for all $\mu,\lambda\in\mathbb D$ and for all $z_1,w_1\in\mathbb C$ and $z_2,w_2 \in \mathbb D$ such that

				\begin{align}
					1-\overline{\Psi^{(3)}({\bf{x}}(\mu),w_1,w_2)}\Psi^{(3)}({\bf{x}}(\lambda),z_1,z_2)\nonumber&=(1-\bar{w}_1z_1)\overline{\tilde{\gamma}^{(w_2)}(\mu,w_1)}\tilde{\gamma}^{(z_2)}(\lambda,z_1)\\&+{\tilde{\eta}^{(w_2)}(\mu,w_1)}^*(I-{F^{(w_2)}(\mu)}^*F^{(z_2)}(\lambda))\tilde{\eta}^{(z_2)}(\lambda,z_1),
				\end{align}
				where $\tilde{\gamma}^{(z_2)}(\lambda,z_1):=(1-F_{22}^{(z_2)}(\lambda)z_1)^{-1}F_{21}^{(z_2)}(\lambda),\tilde{\eta}^{(z_2)}(\lambda,z_1)=\left(\begin{smallmatrix} 1\\z_1\tilde{\gamma}^{(z_2)}(\lambda,z_1)\end{smallmatrix}\right)$ and  $$\Psi^{(3)}({\bf{x}}(\lambda),z_1,z_2)=\frac{x_1(\lambda)-x_3(\lambda)z_2-x_5(\lambda)z_1+x_7(\lambda)z_1z_2}{1-x_2(\lambda)z_2-x_4(\lambda)z_1+x_6(\lambda)z_1z_2}.$$
				
			\end{thm}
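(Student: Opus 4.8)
The plan is to realise Theorem \ref{nevalina} as a one-parameter slicing of the $\mu_{E(3;3;1,1,1)}$-condition into a family of tetrablock problems, one for each fixed $z_2\in\mathbb D$, and then to invoke the known tetrablock realisation together with an outer normalisation to pin down $\mathcal F^{(z_2)}$. The key preliminary observation is that if $A(\lambda)$ realises the coordinates $\mathbf{x}(\lambda)$, then $\det\big(I-A(\lambda)\operatorname{diag}[z_1,z_2,z_3]\big)$ depends on $A(\lambda)$ only through $x_1,\dots,x_7$ and equals
\[ p(\lambda;z_1,z_2,z_3)=1-x_1z_1-x_2z_2-x_4z_3+x_3z_1z_2+x_5z_1z_3+x_6z_2z_3-x_7z_1z_2z_3, \]
so that $\mathbf{x}(\lambda)\in\Gamma_{E(3;3;1,1,1)}$ is equivalent to $p(\lambda;\cdot)\neq0$ on $\mathbb D^{3}$. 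Taking $(z_1,z_3)=(0,0)$ gives $1-x_2(\lambda)z_2\neq0$ for every $z_2\in\mathbb D$, so the three functions in \eqref{Fz2} are well defined and analytic in $\lambda$.

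First I would carry out the slicing. Writing $y_1,y_2,y_3$ for the right-hand sides of \eqref{Fz2}, a direct multiplication yields the factorisation
\[ (1-x_2z_2)\big(1-y_1w_1-y_2w_2+y_3w_1w_2\big)=p(\lambda;w_1,z_2,w_2), \]
i.e. the two tetrablock parameters $(w_1,w_2)$ are identified with the outer slots $(z_1,z_3)$ while $z_2$ is frozen. Since $p(\lambda;\cdot)\neq0$ on $\mathbb D^{3}$ and $1-x_2z_2\neq0$, the tetrablock polynomial $1-y_1w_1-y_2w_2+y_3w_1w_2$ is non-vanishing on $\mathbb D^{2}$; by the characterisation of $\Gamma_{E(2;2;1,1)}$ this means $(y_1(\lambda),y_2(\lambda),y_3(\lambda))\in\Gamma_{E(2;2;1,1)}$ for every $\lambda$, so $(y_1,y_2,y_3)\in\mathcal O(\mathbb D,\Gamma_{E(2;2;1,1)})$.

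Next I would construct $\mathcal F^{(z_2)}$ from this tetrablock datum. The tetrablock realisation theorem produces a $2\times2$ Schur function whose coordinates $(F_{11},F_{22},\det)$ equal $(y_1,y_2,y_3)$; these data fix the diagonal and the off-diagonal product $F_{12}^{(z_2)}F_{21}^{(z_2)}=y_1y_2-y_3$, but not the individual off-diagonal entries. To make the realisation canonical I would take $F_{21}^{(z_2)}$ to be the outer function with $|F_{21}^{(z_2)}|^{2}=|y_1y_2-y_3|$ on $\mathbb T$ and $F_{21}^{(z_2)}(0)\geq0$ (and $F_{21}^{(z_2)}\equiv0$ when $y_1y_2-y_3\equiv0$), and set $F_{12}^{(z_2)}=(y_1y_2-y_3)/F_{21}^{(z_2)}$; this balanced choice is unique and gives $|F_{12}^{(z_2)}|=|F_{21}^{(z_2)}|$ a.e. on $\mathbb T$ for each fixed $z_2$, whence the stated $\sup_{|z_2|<1}$ identity follows by taking suprema over $z_2$. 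The substantive point, and the main obstacle, is precisely the tetrablock content: verifying that this balanced outer normalisation keeps $\mathcal F^{(z_2)}$ contractive, so that $\mathcal F^{(z_2)}\in\mathcal S_1(\mathbb C^2,\mathbb C^2)$, and that the dependence on the slicing parameter $z_2$ is regular enough for the normalisation and the final identity to be meaningful. I would borrow the contractivity and uniqueness from the tetrablock interpolation/realisation results rather than reprove them here.

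Finally, the kernel identity is purely algebraic. From the $2\times2$ Schur-complement formula one has $\mathcal G_{F^{(z_2)}(\lambda)}(z_1)=\dfrac{F_{11}^{(z_2)}-z_1\det\mathcal F^{(z_2)}}{1-z_1F_{22}^{(z_2)}}$, and substituting \eqref{Fz2} collapses the compound fraction to $\Psi^{(3)}(\mathbf{x}(\lambda),z_1,z_2)$; thus $\Psi^{(3)}=\mathcal G_{F^{(z_2)}}$. With $\tilde\gamma^{(z_2)},\tilde\eta^{(z_2)}$ as defined, this yields the eigen-relation $F^{(z_2)}(\lambda)\,\tilde\eta^{(z_2)}(\lambda,z_1)=\big(\Psi^{(3)}(\mathbf{x}(\lambda),z_1,z_2),\,\tilde\gamma^{(z_2)}(\lambda,z_1)\big)^{T}$, and likewise at $(\mu,w_1)$ with $F^{(w_2)}$. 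Expanding $\tilde\eta^{(w_2)}(\mu,w_1)^{*}\big(I-F^{(w_2)}(\mu)^{*}F^{(z_2)}(\lambda)\big)\tilde\eta^{(z_2)}(\lambda,z_1)$ as $\tilde\eta^{(w_2)*}\tilde\eta^{(z_2)}-(F^{(w_2)}\tilde\eta^{(w_2)})^{*}(F^{(z_2)}\tilde\eta^{(z_2)})$ and inserting the eigen-relation, the $\tilde\gamma$-cross terms cancel against the term $(1-\bar w_1z_1)\overline{\tilde\gamma^{(w_2)}(\mu,w_1)}\tilde\gamma^{(z_2)}(\lambda,z_1)$, leaving exactly $1-\overline{\Psi^{(3)}(\mathbf{x}(\mu),w_1,w_2)}\Psi^{(3)}(\mathbf{x}(\lambda),z_1,z_2)$. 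This is the polarised form of \cite[Proposition 2.11]{pal1}, and it requires no relation between $F^{(z_2)}$ and $F^{(w_2)}$, which is why the mixed $\lambda$/$\mu$ and $z_2$/$w_2$ slots cause no difficulty.
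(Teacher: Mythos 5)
Your proposal is correct and follows essentially the same route as the paper's proof: freeze $z_2$, pass to the tetrablock datum $\bigl(y_1^{(z_2)},y_2^{(z_2)},y_3^{(z_2)}\bigr)$, construct $\mathcal F^{(z_2)}$ with balanced off-diagonal entries via inner--outer factorisation normalised by $F_{21}^{(z_2)}(0)\geq 0$, and derive the kernel identity from the computation $\mathcal G_{\mathcal F^{(z_2)}(\lambda)}(z_1)=\Psi^{(3)}(\mathbf{x}(\lambda),z_1,z_2)$. The only differences are matters of detail: you justify tetrablock membership of the slice by the explicit factorisation $(1-x_2z_2)\bigl(1-y_1w_1-y_2w_2+y_3w_1w_2\bigr)=p(\lambda;w_1,z_2,w_2)$, which the paper merely asserts, and you cite the tetrablock realisation literature for contractivity and uniqueness of the balanced lift, whereas the paper verifies contractivity directly on boundary values (via \cite[Corollary 2.40]{pal1}) together with the maximum modulus principle.
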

			\begin{proof}
				
				For ${\bf{x}}=(x_1,\ldots,x_7)\in \mathcal O(\mathbb D,\Gamma_{E(3;3;1,1,1)}),$ and $z_2\in \mathbb{D}$, set $y_1^{(z_2)}(\lambda):=\frac{x_1(\lambda)-z_2x_{3}(\lambda)}{1-x_2(\lambda)z_2}, y_2^{(z_2)}(\lambda):=\frac{x_4(\lambda)-z_2x_{6}(\lambda)}{1-x_2(\lambda)z_2}$ and $y_3^{(z_2)}(\lambda):=\frac{x_5(\lambda)-z_2x_{7}(\lambda)}{1-x_2(\lambda)z_2}$ for all $\lambda \in \mathbb{D}$.

				Note that if ${\bf{x}}=(x_1,\ldots,x_7)\in \mathcal O(\mathbb D,\Gamma_{E(3;3;1,1,1)}),$ then 
				$(y_1^{(z_2)},y_2^{(z_2)},y_3^{(z_2)}) \in \mathcal O(\mathbb D,\Gamma_{E(2;2;1,1)})$ for all $z_2 \in \mathbb D.$
				
				Choose $z_2\in \mathbb{D}$ and fix it, we first assume that $(y_1^{(z_2)},y_2^{(z_2)},y_3^{(z_2)})$ is triangular, that is, 
				$$y_1^{(z_2)}y_2^{(z_2)}=y_3^{(z_2)}.$$
				By the characterization of tetrablock [Theorem $2.4$,\cite{Abouhajar}], we have $|y_1^{(z_2)}(\lambda)|\leq 1 $ and $|y_2^{(z_2)}(\lambda)|\leq 1$ for all $\lambda \in \mathbb D.$ Then the function \begin{equation}
					\mathcal F^{(z_2)}{(\lambda)}=\begin{pmatrix} y_1^{(z_2)}(\lambda) & 0\\0 & y_2^{(z_2)}(\lambda) \end{pmatrix}\in \mathcal S_{1}(\mathbb C^2,\mathbb C^2),\;\;\mbox{for all}\;\; \lambda\in \mathbb{D}
				\end{equation}
				and satisfy the required properties \eqref{Fz12} and \eqref{Fz2}.
				
				We consider the case $y_1^{(z_2)}y_2^{(z_2)}\neq y_3^{(z_2)}$, the  $H^{\infty}$ function $y_1^{(z_2)}y_2^{(z_2)}-y_3^{(z_2)}$ is non-zero and has unique outer-inner factorisation, that is,
				$\phi^{(z_2)}e^{C_{z_2}}=y_1^{(z_2)}y_2^{(z_2)}-y_3^{(z_2)},$ where $e^{C_{z_2}}$ is outer with $e^{C_{z_2}}(0)\geq 0$ and $\phi^{(z_2)}$ is an inner function. Let 
				\begin{equation}\label{FZ123}
					\mathcal F^{(z_2)}:=\begin{pmatrix} y_1^{(z_2)} & \phi^{(z_2)}e^{\frac{1}{2}C_{z_2}}\\e^{\frac{1}{2}C_{z_2}} & y_2^{(z_2)} \end{pmatrix}.
				\end{equation}
				Clearly
				\begin{align*}
					\det\mathcal{F}^{(z_2)}&=y_1^{(z_2)}y_2^{(z_2)}-\phi^{(z_2)}e^{C_{z_2}}\\&=y_1^{(z_2)}y_2^{(z_2)}-y_1^{(z_2)}y_2^{(z_2)}+y_3^{(z_2)}\\&= y_3^{(z_2)}.
				\end{align*}
				and $|F^{(z_2)}_{12}|=e^{\Re \frac{1}{2}C_{z_2}}=|F^{(z_2)}_{21}|$ almost everywhere on $\mathbb T$, $F^{(z_2)}_{21}$ is outer and $F^{(z_2)}_{21}(0)\geq 0.$ Therefore, we deduce that the only matrix that satisfies the required properties \eqref{Fz12} and \eqref{Fz2} for fixed but arbitrary $z_2\in \mathbb D$.
				
				We now show that $\mathcal F^{(z_2)}{(\lambda)}=((F_{ij}^{(z_2)}(\lambda)))_{i,j=1}^{2}\in \mathcal S_{1}(\mathbb C^2,\mathbb C^2).$
				Notice that $\mathcal F^{(z_2)}$ is holomorphic on $\mathbb D.$ In order to complete this, we must demonstrate that $I-\mathcal F^{(z_2)}{(\lambda)}^*\mathcal F^{(z_2)}{(\lambda)}\geq 0$ for  $\lambda \in \mathbb D.$ To show this, it is sufficent to prove that for all $\lambda \in \mathbb D$, the diagonal entries of $I-\mathcal F^{(z_2)}{(\lambda)}^*\mathcal F^{(z_2)}{(\lambda)}$ are non-negative and $\det( I-\mathcal F^{(z_2)}{(\lambda)}^*\mathcal F^{(z_2)}{(\lambda)})\geq 0.$ As $|F^{(z_2)}_{12}|=e^{\Re \frac{1}{2}C_{z_2}}=|F^{(z_2)}_{21}|$ almost everywhere on $\mathbb T$ and $F^{(z_2)}_{12}F^{(z_2)}_{21}=y_1^{(z_2)}y_2^{(z_2)}-y_3^{(z_2)},$ we get 
				$$ |F^{(z_2)}_{12}(\lambda)|^2=|F^{(z_2)}_{21}(\lambda)|^2=|F^{(z_2)}_{12}(\lambda)F^{(z_2)}_{21}(\lambda)|=|y_1^{(z_2)}(\lambda)y_2^{(z_2)}(\lambda)-y_3^{(z_2)(\lambda)}|$$ almost everywhere on $\mathbb T$. For  almost every $\lambda \in \mathbb T,$ we have 
				\begin{align}
					\nonumber& I-\mathcal F^{(z_2)}{(\lambda)}^*\mathcal F^{(z_2)}{(\lambda)}\\&=\begin{pmatrix} 1-|y_1^{z_2}(\lambda)|^2-|y_1^{z_2}(\lambda)y_2^{z_2}(\lambda)-y_3^{z_2}(\lambda)| &-\overline{y_1^{z_2}(\lambda)}F^{(z_2)}_{12}(\lambda)-\overline{F^{(z_2)}_{12}(\lambda)}y_2^{z_2}(\lambda)\\-y_1^{z_2}(\lambda)\overline{F^{(z_2)}_{12}(\lambda)}-F^{(z_2)}_{12}(\lambda)\overline{y_2^{(z_2)}(\lambda)} & 1-|y_2^{(z_2)}(\lambda)|^2-|y_1^{(z_2)}(\lambda)y_2^{(z_2)}(\lambda)-y_3{(z_2)}(\lambda)|
					\end{pmatrix}
				\end{align}
				and 
				\begin{align}
					\det(I-\mathcal F^{(z_2)}{(\lambda)}^*\mathcal F^{(z_2)}{(\lambda)})\nonumber &=1-|y_1^{(z_2)}(\lambda)|^2-2|y_1^{(z_2)}(\lambda)y_2^{(z_2)}(\lambda)-y_3^{(z_2)}(\lambda)|\\&-|y_2^{(z_2)}(\lambda)|^2+|y_3^{(z_2)}(\lambda)|^2.
				\end{align}
				By using \cite[Corollary 2.40]{pal1}, we deduce that  for almost every $\lambda\in \mathbb T$ and hence by Maximum Modulus Principle, $\|\mathcal F^{(z_2)}{(\lambda)}\|\leq 1$  for all $\lambda \in \mathbb D.$ By \cite[Proposition 2.16]{pal1}, for any $\mathcal F^{(z_2)}{(\lambda)}=((F_{ij}^{(z_2)}(\lambda)))_{i,j=1}^{2}\in \mathcal S_{1}(\mathbb C^2,\mathbb C^2),$
				
				\begin{align}\label{AAAA norm12}
					1-\overline{\mathcal G_{\mathcal F^{(w_2)}{(\mu)}}}(w_1)\mathcal G_{\mathcal F^{(z_2)}{(\lambda)}}(z_1)\nonumber&=\overline{{\tilde{\gamma_1}^{(w_2)}(w_1)}}(1-\bar{w}_1z_1)\tilde{\gamma}_1^{(z_2)}(z_1)\\&+{\tilde{\eta}^{(w_2)}(w_1)}^*(I-\mathcal F^{(z_2)}{(\mu)}^*\mathcal F^{(z_2)}{(\lambda)})\tilde{\eta}^{(z_2)}(z_1)
				\end{align}
				for all $\lambda,\mu,z_2,w_2\in \mathbb D$ and $z_1,w_1\in \mathbb C$ such that $1-F_{22}^{(z_2)}(\lambda)z_1\neq 0$ and $1-F_{22}^{(w_2)}(\mu)w_1\neq 0.$ We notice that 
				\begin{align}
					\mathcal G_{\mathcal F^{(z_2)}{(\lambda)}}(z_1)\nonumber&=F_{11}^{(z_2)}(\lambda)+\frac{F^{(z_2)}_{12}(\lambda)F^{(z_2)}_{21}(\lambda)z_1}{1-F_{22}^{(z_2)}(\lambda)z_1}\\\nonumber &=y_1^{z_2}(\lambda)+\frac{y_1^{z_2}(\lambda) y_2^{z_2}(\lambda)-y_3^{z_2}(\lambda)z_1}{1-y_2^{z_2}(\lambda)z_1}\\\nonumber &=\frac{x_1(\lambda)-z_2x_3(\lambda)}{1-x_2(\lambda)z_2}+\frac{\frac{x_1(\lambda)-z_2x_3(\lambda)}{1-x_2(\lambda)z_2}\frac{x_4(\lambda)-z_2x_6(\lambda)}{1-x_2(\lambda)z_2}-\frac{x_5(\lambda)-z_2x_7(\lambda)}{1-x_2(\lambda)z_2}z_1}{1-\frac{x_4(\lambda)-z_2x_6(\lambda)}{1-x_2(\lambda)z_2}z_1}\\&=\Psi^{(3)}({\bf{x}}(\lambda),z_1,z_2)
				\end{align}
				for all $\lambda,z_2\in \mathbb D$ and $z_1\in \mathbb C$ such that $1-F_{22}^{(z_2)}(\lambda)z_1\neq 0.$ We define the functions 
				$\tilde{\gamma}_1^{(z_2)}$ and $\tilde{\eta}^{(z_2)}$ by equations \cite[Equation 2.30]{pal1} and \cite[Equation 2.31]{pal1}. Thus we have 
				\begin{align}
					1-\overline{\Psi^{(3)}({\bf{x}}(\mu),w_1,w_2)}\Psi^{(3)}({\bf{x}}(\lambda),z_1,z_2)\nonumber&=1-\overline{\mathcal G_{\mathcal F^{(w_2)}{(\mu)}}}(w_1)\mathcal G_{\mathcal F^{(z_2)}{(\lambda)}}(z_1)\\\nonumber&=(1-\bar{w}_1z_1)\overline{\tilde{\gamma}^{(w_2)}(\mu,w_1)}\tilde{\gamma}^{(z_2)}(\lambda,z_1)\\&+{\tilde{\eta}^{(w_2)}(\mu,w_1)}^*(I-{F^{(w_2)}(\mu)}^*F^{(z_2)}(\lambda))\tilde{\eta}^{(z_2)}(\lambda,z_1)
				\end{align}
				for all $\lambda,\mu,z_2,w_2\in \mathbb D$ and $z_1,w_1\in \mathbb C$ such that $1-F_{22}^{(z_2)}(\lambda)z_1\neq 0$ and $1-F_{22}^{(w_2)}(\mu)w_1\neq 0.$ This completes the proof.

			\end{proof}
			
			The property of mapping from $\mathcal O(\mathbb D,\Gamma_{E(3;3;1,1,1)})$ and membership in the Schur class are related via the above analytic matrix function $\mathcal F^{(z_2)}$, which was constructed in the above theorem. 
			\begin{thm}
				Let $\lambda_1,\ldots,\lambda_n$ be distinct points in $\mathbb D$ and let ${\bf{x}}_j=(x_{1j},\ldots,x_{7j})\in  \Gamma_{E(3;3;1,1,1)}$ for $j=1,\ldots,n.$ Then the following conditions are equivalent:
				\begin{enumerate}
					\item There exists an analytic function ${\bf{x}}:\mathbb D\to \Gamma_{E(3;3;1,1,1)}$ such that $${\bf{x}}(\lambda_j)=(x_{1j},\ldots,x_{7j}),1\leq j\leq n.$$
					
					\item There exists $b_{j}^{(z_2)},c_{j}^{(z_2)}$ satisfying 
					\begin{equation}\label{pickn}b_{j}^{(z_2)}c_{j}^{(z_2)}=x_{1j}^{(z_2)}x_{2j}^{(z_2)}-x_{3j}^{(z_2)}, 1\leq j\leq n\end{equation} such that the Nevanlinna-Pick interpolation problem with data
					\begin{equation}\label{picknv}\lambda_j\to \left(\begin{smallmatrix} x_{1j}^{(z_2)} & b_{j}^{(z_2)}\\c_{j}^{(z_2)} & x_{2j}^{(z_2)}\end{smallmatrix}\right)\end{equation} is solvable, where $$x_{1j}^{(z_2)}=\frac{x_{4j}-z_2x_{6j}}{1-z_2x_{2j}}, x_{2j}^{(z_2)}=\frac{x_{1j}-z_2x_{3j}}{1-z_2x_{2j}}~~{\rm{and}}~~x_{3j}^{(z_2)}=\frac{x_{5j}-z_2x_{7j}}{1-z_2x_{2j}}$$ for all $z_2\in \mathbb D.$
				\end{enumerate}
			\end{thm}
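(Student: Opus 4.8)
The plan is to establish the two implications separately. The guiding principle is the one already visible in Theorem~\ref{nevalina}: a holomorphic map into $\Gamma_{E(3;3;1,1,1)}$ is exactly a family, parametrised by $z_2\in\mathbb D$, of holomorphic maps into the tetrablock $\Gamma_{E(2;2;1,1)}$, obtained by the M\"obius slicing $y_1^{(z_2)}=\frac{x_1-z_2x_3}{1-x_2z_2}$, $y_2^{(z_2)}=\frac{x_4-z_2x_6}{1-x_2z_2}$, $y_3^{(z_2)}=\frac{x_5-z_2x_7}{1-x_2z_2}$; and, by the tetrablock characterisation [Theorem~2.4,~\cite{Abouhajar}], each such slice is governed by a $2\times2$ matricial Nevanlinna--Pick problem with prescribed determinant. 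I would run both directions through this dictionary.

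For $(1)\Rightarrow(2)$, suppose ${\bf x}:\mathbb D\to\Gamma_{E(3;3;1,1,1)}$ is analytic with ${\bf x}(\lambda_j)={\bf x}_j$. Fix $z_2\in\mathbb D$ and let $\mathcal F^{(z_2)}\in\mathcal S_1(\mathbb C^2,\mathbb C^2)$ be the function furnished by Theorem~\ref{nevalina}, whose diagonal entries and determinant are $y_1^{(z_2)},y_2^{(z_2)},y_3^{(z_2)}$. Conjugating by the flip $J=\left(\begin{smallmatrix}0&1\\1&0\end{smallmatrix}\right)$ (unitary, hence preserving both membership in $\mathcal S_1(\mathbb C^2,\mathbb C^2)$ and the determinant) and setting $b_j^{(z_2)}:=F_{21}^{(z_2)}(\lambda_j)$, $c_j^{(z_2)}:=F_{12}^{(z_2)}(\lambda_j)$, the value $J\mathcal F^{(z_2)}(\lambda_j)J$ equals exactly $\left(\begin{smallmatrix}x_{1j}^{(z_2)}&b_j^{(z_2)}\\c_j^{(z_2)}&x_{2j}^{(z_2)}\end{smallmatrix}\right)$. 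Thus $J\mathcal F^{(z_2)}J$ solves the matricial Nevanlinna--Pick problem of (2), and evaluating $\det\mathcal F^{(z_2)}=F_{11}^{(z_2)}F_{22}^{(z_2)}-F_{12}^{(z_2)}F_{21}^{(z_2)}$ at $\lambda_j$ gives the constraint $b_j^{(z_2)}c_j^{(z_2)}=x_{1j}^{(z_2)}x_{2j}^{(z_2)}-x_{3j}^{(z_2)}$ of \eqref{pickn}. As $z_2\in\mathbb D$ was arbitrary, (2) follows.

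For $(2)\Rightarrow(1)$ I would first translate (2) back to the tetrablock. For each $z_2$, a solution $G^{(z_2)}\in\mathcal S_1(\mathbb C^2,\mathbb C^2)$ of the matricial problem yields, via $\big(G_{11}^{(z_2)},G_{22}^{(z_2)},\det G^{(z_2)}\big)$ and the constraint \eqref{pickn}, an analytic map into $\overline{\Gamma_{E(2;2;1,1)}}$ interpolating $(x_{1j}^{(z_2)},x_{2j}^{(z_2)},x_{3j}^{(z_2)})$ at $\lambda_j$; equivalently, by [Theorem~2.4,~\cite{Abouhajar}], the $z_2$-sliced tetrablock interpolation problem is solvable for every $z_2\in\mathbb D$. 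The naive way to recover ${\bf x}$ is to invert the slicing: because $y_1^{(z_2)}$ is, as a function of $z_2$, a M\"obius transform with denominator $1-x_2z_2$, it already determines $x_1,x_2,x_3$, while $y_2^{(z_2)}$ and $y_3^{(z_2)}$ (sharing the same denominator) determine $x_4,x_6$ and $x_5,x_7$; the resulting tuple $(x_1,\dots,x_7)$ would lie in $\Gamma_{E(3;3;1,1,1)}$ by the converse of the slice property recorded in the proof of Theorem~\ref{nevalina} (cf.~\cite{pal1}).

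The hard part is precisely the legitimacy of this inversion: the slice solutions $G^{(z_2)}$ produced for different values of $z_2$ are a priori unrelated, so the functions $\big(G_{11}^{(z_2)},G_{22}^{(z_2)},\det G^{(z_2)}\big)$ need not assemble into the common-denominator M\"obius form in $z_2$ required above. I would circumvent the gluing by arguing at the level of solvability rather than of explicit solutions: for the optimal choice of $b_j^{(z_2)},c_j^{(z_2)}$, solvability of the matricial problem for a fixed $z_2$ is equivalent to positive semidefiniteness of the corresponding Pick matrix, and by the kernel identity of Theorem~\ref{nevalina} relating $\Psi^{(3)}$ to $\tilde{\gamma}^{(z_2)},\tilde{\eta}^{(z_2)}$, positivity of all these Pick matrices as $z_2$ ranges over $\mathbb D$ is equivalent to positivity of the single reproducing kernel attached to the data ${\bf x}_j$. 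Feeding that positive kernel into the realisation / lurking-isometry construction used in the proof of Theorem~\ref{UW} then produces a jointly analytic, coherent interpolant ${\bf x}\in\mathcal O(\mathbb D,\Gamma_{E(3;3;1,1,1)})$, completing $(2)\Rightarrow(1)$.
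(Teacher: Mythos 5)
Your $(1)\Rightarrow(2)$ direction is correct and is essentially the paper's argument: invoke Theorem \ref{nevalina}, evaluate $\mathcal F^{(z_2)}$ at the nodes $\lambda_j$, read off $b_j^{(z_2)},c_j^{(z_2)}$ from the off-diagonal entries, and obtain \eqref{pickn} from the determinant identity. Your conjugation by the flip $J=\left(\begin{smallmatrix}0&1\\1&0\end{smallmatrix}\right)$ is in fact a small improvement on the paper: by \eqref{Fz2} one has $F_{11}^{(z_2)}(\lambda_j)=\frac{x_{1j}-z_2x_{3j}}{1-x_{2j}z_2}=x_{2j}^{(z_2)}$ and $F_{22}^{(z_2)}(\lambda_j)=x_{1j}^{(z_2)}$ in the notation of the statement, so $\mathcal F^{(z_2)}(\lambda_j)$ appears with its diagonal entries interchanged relative to \eqref{picknv}; the paper silently identifies the two, while your $J$-conjugation makes the bookkeeping exact.

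The genuine problem is $(2)\Rightarrow(1)$, and you have correctly isolated it: the solutions $G^{(z_2)}$ of the matricial problems for different $z_2$ are a priori unrelated, so the triples $\bigl(G_{11}^{(z_2)},G_{22}^{(z_2)},\det G^{(z_2)}\bigr)$ need not have the common-denominator M\"obius dependence on $z_2$ that would allow reconstruction of a single ${\bf x}\in\mathcal O(\mathbb D,\Gamma_{E(3;3;1,1,1)})$ with ${\bf x}(\lambda_j)={\bf x}_j$. However, your proposed circumvention is a plan, not a proof, and its two pivotal claims are unestablished. First, the assertion that positivity of the Pick matrices for all $z_2\in\mathbb D$ is equivalent to positivity of ``the single reproducing kernel attached to the data ${\bf x}_j$'' is circular as stated: the kernel identity in Theorem \ref{nevalina} presupposes a globally defined ${\bf x}\in\mathcal O(\mathbb D,\Gamma_{E(3;3;1,1,1)})$, so using it at the level of finite data requires first formulating and proving a Pick-type positivity theorem for $\Gamma_{E(3;3;1,1,1)}$ --- which is precisely what is being sought. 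Second, even granting such a positive kernel, the lurking-isometry argument of Theorem \ref{UW} outputs a function $\Xi\in\mathcal S_{1}(\mathbb C^3,\mathbb C^3)$, not an interpolant of the seven-component data; you would still need to pass from $\Xi$ to a $7$-tuple via its entries and $2\times2$ minors, show that this tuple is an analytic map into $\Gamma_{E(3;3;1,1,1)}$, and verify interpolation of all seven components at each $\lambda_j$ --- none of which is sketched. For fairness you should know that the paper's own proof of this direction has the same defect: for fixed $z_2$ it defines the slice function $\bigl(F_{11}^{(z_2)},F_{22}^{(z_2)},\det\mathcal F^{(z_2)}\bigr)$ into $\Gamma_{E(2;2;1,1)}$ and then concludes only that ${\bf x}(\lambda_j)=(x_{1j},\ldots,x_{7j})\in\Gamma_{E(3;3;1,1,1)}$ --- a fact that is part of the hypothesis --- without ever producing an analytic map into $\Gamma_{E(3;3;1,1,1)}$ that interpolates the ${\bf x}_j$. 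So the gap you identified is real and is present in the paper as well; closing it (for instance by actually proving the kernel-positivity equivalence you propose and carrying out the realization step) is the substantive missing content, not a routine completion.
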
	
			\begin{proof}
				Suppose $(1)$ holds. Then by Theorem \eqref{nevalina}, there exists a unique function  
				\begin{equation}\label{Fz1212}\mathcal F^{(z_2)}{(\lambda)}=((F_{ij}^{(z_2)}(\lambda)))_{i,j=1}^{2}\in \mathcal S_{1}(\mathbb C^2,\mathbb C^2)\end{equation}  such that 
				$$(F^{(z_2)}_{11}(\lambda),F^{(z_2)}_{22}(\lambda),\det(\mathcal{F}^{(z_2)}{(\lambda)})\in \Gamma_{E(2;2;1,1)}$$ for all $z_2\in \mathbb D$
				and $\sup_{|z_2|<1}|F_{12}^{(z_2)}(\lambda)|=\sup_{|z_2|< 1}|F_{21}^{(z_2)}(\lambda)|$ almost everywhere on $\mathbb T$ and for fixed but arbitrary $z_2\in \mathbb D,$ $F_{21}^{(z_2)}$ is either $0$ or outer, and $F_{21}^{(z_2)}(0)\geq 0.$ Let $b_{j}^{(z_2)}=F^{(z_2)}_{12}(\lambda_j), c_{j}^{(z_2)}=F^{(z_2)}_{21}(\lambda_j)$ for $1\leq j\leq n.$ Then 
				
				$$F^{(z_2)}(\lambda_j)=\left(\begin{smallmatrix} x_{1j}^{(z_2)} & b_{j}^{(z_2)}\\c_{j}^{(z_2)} & x_{2j}^{(z_2)}\end{smallmatrix}\right)$$
				and so $x_{3j}^{(z_2)}=x_{1j}^{(z_2)}x_{2j}^{(z_2)}-b_{j}^{(z_2)}c_{j}^{(z_2)}.$ Hence we have 
				$$b_{j}^{(z_2)}c_{j}^{(z_2)}=x_{1j}^{(z_2)}x_{2j}^{(z_2)}-x_{3j}^{(z_2)}.$$ This demonstrates that \eqref{pickn} are satisfied, and $F^{(z_2)}$ can solve the matricial Nevanlinna-Pick problem with the data \eqref{picknv} for this choice of $b_j ^{(z_2)}$ and $c_j ^{(z_2)}$.
				
				Suppose that  $b_{j}^{(z_2)},c_{j}^{(z_2)}$ exists such that \eqref{pickn} hold.
				Let the Nevanlinna-Pick interpolation problem with data in \eqref{picknv} be solvable with $2\times 2$ matrix  $\mathcal F^{(z_2)}{(\lambda)}=((F_{ij}^{(z_2)}(\lambda)))_{i,j=1}^{2}\in \mathcal S_{1}(\mathbb C^2,\mathbb C^2)$, that is, $\mathcal F^{(z_2)}{(\lambda)}$ is the $2\times 2$ matricial Schur function such that \begin{equation}\label{piccknv}\mathcal F^{(z_2)}{(\lambda_j)}= \left(\begin{smallmatrix} x_{1j}^{(z_2)} & b_{j}^{(z_2)}\\c_{j}^{(z_2)} & x_{2j}^{(z_2)}\end{smallmatrix}\right)~{\rm{for}}~1\leq j\leq n.\end{equation}
				For fixed but arbitrary $z_2\in \mathbb D,$ define an analytic function ${\bf{x}}:=\textbf{x}^{(z_2)}:\mathbb D\to \Gamma_{E(2;2;1,1)} \subseteq \Gamma_{E(3;3;1,1,1)}$ by 
				\begin{equation}
					\begin{aligned}
						x_{1}^{(z_2)}(\lambda)=F_{11}^{(z_2)}(\lambda)\\
						x_{2}^{(z_2)}(\lambda)=F_{22}^{(z_2)}(\lambda)\\
						x_{3}^{(z_2)}(\lambda)=F_{11}^{(z_2)}(\lambda)F_{22}^{(z_2)}(\lambda)-F_{12}^{(z_2)}(\lambda)F_{21}^{(z_2)}(\lambda)=\det(\mathcal F^{(z_2)}{(\lambda)})
					\end{aligned}
				\end{equation}
				Notice that the condition \eqref{pickn} are satisfied, for $1\leq j\leq n,$
				\begin{equation}
					\begin{aligned}
						x_{1j}^{(z_2)}=x_{1}^{(z_2)}(\lambda_j)=F_{11}^{(z_2)}(\lambda_j)\\
						x_{2j}^{(z_2)}=x_{2}^{(z_2)}(\lambda_j)=F_{22}^{(z_2)}(\lambda_j)\\
						x_{3j}^{(z_2)}=x_{3}^{(z_2)}(\lambda_j)=\det(\mathcal F^{(z_2)})(\lambda_j)=x_{1j}^{(z_2)}x_{2j}^{(z_2)}-b_{j}^{(z_2)}c_{j}^{(z_2)}
					\end{aligned}
				\end{equation}
				For fixed $\lambda_j\in \mathbb D$ we note that $(x_{1j}^{(z_2)},x_{2j}^{(z_2)},x_{3j}^{(z_2)})\in \Gamma_{E(2;2;1,1)}$ for all $z_2\in \mathbb D$ and by characterisation of $\Gamma_{E(3;3;1,1,1)}$ in \cite[Theorem 2.39]{pal1}, we have ${\bf{x}}(\lambda_j)=(x_{1j},\ldots,x_{7j})\in  \Gamma_{E(3;3;1,1,1)}$ for $1\leq j\leq n.$ This completes the proof.
			\end{proof}
			
			Using the similar argument as in Theorem \eqref{nevalina}, we also prove the following theorem. 
			\begin{thm}\label{nevalina1}
				Suppose ${\tilde{\bf{x}}}=(x_1,\ldots,x_5)\in \mathcal O(\mathbb D,\Gamma_{E(3;2;1,2)}).$ Then there exists a unique function  \begin{equation}\label{Fz7}\mathcal F^{(z)}{(\lambda)}=((F_{ij}^{(z)}(\lambda)))_{i,j=1}^{2}\in \mathcal S_{1}(\mathbb C^2,\mathbb C^2)\end{equation}  such that 
				\begin{equation}\label{Fz8} F^{(z)}_{11}(\lambda)=\frac{2x_1(\lambda)-zx_3(\lambda)}{2-x_2(\lambda)z},F^{(z)}_{22}(\lambda)=\frac{x_2(\lambda)-2zx_4(\lambda)}{2-x_2(\lambda)z}~{\rm{ and }}~\det(\mathcal{F}^{(z)}{(\lambda)})=\frac{x_3(\lambda)-2zx_5(\lambda)}{1-x_2(\lambda)z}\end{equation} for all $z\in \mathbb D$
				and $\sup_{|z|<1}|F_{12}^{(z)}(\lambda)|=\sup_{|z|< 1}|F_{21}^{(z)}(\lambda)|$ almost everywhere on $\mathbb T$ and for fixed but arbitrary $z\in \mathbb D$ $F_{21}^{(z)}$ is either $0$ or outer, and $F_{21}^{(z)}(0)\geq 0.$ Moreover, for all $\mu,\lambda\in\mathbb D$ and for all  $z,w \in \mathbb D$ such that
				\begin{align}
					1-\overline{\Psi_{(3)}({\bf{x}}(\mu),w)}\Psi_{(3)}({\bf{x}}(\lambda),z)\nonumber&=(1-\bar{w}z)\overline{\tilde{\gamma}^{(w)}(\mu,w_1)}\tilde{\gamma}^{(z)}(\lambda,z)\\&+{\tilde{\eta}^{(w)}(\mu,w)}^*(I-{F^{(w)}(\mu)}^*F^{(z)}(\lambda))\tilde{\eta}^{(z)}(\lambda,z),
				\end{align}
				where $\tilde{\gamma}^{(z)}(\lambda,z):=(1-F_{22}^{(z)}(\lambda)z)^{-1}F_{21}^{(z)}(\lambda),\tilde{\eta}^{(z)}(\lambda,z)=\left(\begin{smallmatrix} 1\\z\tilde{\gamma}^{(z)}(\lambda,z)\end{smallmatrix}\right)$ and  $$\Psi_{(3)}({\bf{x}}(\lambda),z)=\frac{x_1(\lambda)-x_3(\lambda)z+x_5(\lambda)z^2}{1-x_2(\lambda)z+x_4(\lambda)z^2}.$$
				
			\end{thm}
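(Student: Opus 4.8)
The plan is to transcribe the proof of Theorem~\ref{nevalina}, letting the single scaling variable $z$ play the combined role that the two independent variables $z_1,z_2$ played there, so that the bilinear symbol $\Psi^{(3)}$ degenerates to the quadratic symbol $\Psi_{(3)}$. First I would fix $z\in\mathbb D$ and introduce $y_1^{(z)}(\lambda),y_2^{(z)}(\lambda),y_3^{(z)}(\lambda)$ as the three right-hand sides prescribed in \eqref{Fz8} for $F_{11}^{(z)}(\lambda)$, $F_{22}^{(z)}(\lambda)$ and $\det(\mathcal F^{(z)}(\lambda))$. The first substantive step is to invoke the characterization of $\Gamma_{E(3;2;1,2)}$ from \cite{pal1} to conclude that $(y_1^{(z)},y_2^{(z)},y_3^{(z)})\in\mathcal O(\mathbb D,\Gamma_{E(2;2;1,1)})$ for every $z\in\mathbb D$; this tetrablock membership is the engine of the construction, exactly as the reduction to $\Gamma_{E(2;2;1,1)}$ was in Theorem~\ref{nevalina}.

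Next I would build $\mathcal F^{(z)}$ by the same case split. If $y_1^{(z)}y_2^{(z)}=y_3^{(z)}$, then the tetrablock characterization \cite{Abouhajar} forces $|y_i^{(z)}|\le 1$, and the diagonal matrix $\mathcal F^{(z)}=\operatorname{diag}(y_1^{(z)},y_2^{(z)})$ meets every requirement. Otherwise the $H^\infty$ function $y_1^{(z)}y_2^{(z)}-y_3^{(z)}$ is nonzero with inner-outer factorization $\phi^{(z)}e^{C_z}=y_1^{(z)}y_2^{(z)}-y_3^{(z)}$, $e^{C_z}(0)\ge 0$, and I would set
$$\mathcal F^{(z)}=\begin{pmatrix} y_1^{(z)} & \phi^{(z)}e^{\frac12 C_z}\\ e^{\frac12 C_z} & y_2^{(z)}\end{pmatrix}.$$
This choice yields $\det\mathcal F^{(z)}=y_3^{(z)}$, makes $|F_{12}^{(z)}|=|F_{21}^{(z)}|$ almost everywhere on $\mathbb T$ with $F_{21}^{(z)}$ outer and $F_{21}^{(z)}(0)\ge 0$, and these normalizations are precisely what force $\mathcal F^{(z)}$ to be unique.

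To confirm $\mathcal F^{(z)}\in\mathcal S_1(\mathbb C^2,\mathbb C^2)$ I would verify $I-\mathcal F^{(z)}(\lambda)^*\mathcal F^{(z)}(\lambda)\ge 0$ by showing that its two diagonal entries and its determinant are nonnegative almost everywhere on $\mathbb T$, using the tetrablock inequality (the analogue of \cite[Corollary 2.40]{pal1}), and then pulling the estimate into $\mathbb D$ by the Maximum Modulus Principle. For the kernel identity I would apply \cite[Proposition 2.16]{pal1} to the $2\times2$ Schur function $\mathcal F^{(z)}$ to obtain the realization formula for $\mathcal G_{\mathcal F^{(z)}(\lambda)}$ in terms of $\tilde\gamma^{(z)}$ and $\tilde\eta^{(z)}$, then evaluate at the same variable to get $\mathcal G_{\mathcal F^{(z)}(\lambda)}(z)=y_1^{(z)}+(y_1^{(z)}y_2^{(z)}-y_3^{(z)})z/(1-y_2^{(z)}z)$ and simplify; substituting this back into the realization formula produces the stated identity.

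The step I expect to be the main obstacle is precisely this last simplification. Since $z$ now occurs both in the M\"obius-type normalization of the entries $y_i^{(z)}$ and as the evaluation point of $\mathcal G_{\mathcal F^{(z)}(\lambda)}$, the algebra no longer separates into two free variables as in Theorem~\ref{nevalina}; the two occurrences of $z$ interact, and one must check that the resulting rational function genuinely telescopes to $\Psi_{(3)}(\mathbf x(\lambda),z)=\frac{x_1-x_3z+x_5z^2}{1-x_2z+x_4z^2}$. Tracking the factor-of-$2$ normalizations in $F_{11}^{(z)}$ and $F_{22}^{(z)}$ and confirming that numerator and denominator collapse to the advertised quadratics in $z$ is the delicate part; the remainder transcribes from the proof of Theorem~\ref{nevalina} with only cosmetic changes.
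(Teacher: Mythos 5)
Your proposal is correct and is essentially the paper's own argument: the paper gives no separate proof of Theorem \ref{nevalina1}, stating only that it follows ``using the similar argument as in Theorem \ref{nevalina},'' which is exactly the transcription you carry out (tetrablock reduction, triangular versus inner--outer case split, Schur-class verification on $\mathbb T$ plus Maximum Modulus, then the realization identity). The telescoping you flag as the main obstacle does go through: with the stated normalizations one computes $y_1^{(z)}-y_3^{(z)}z=\frac{2\left(x_1-x_3z+x_5z^2\right)}{2-x_2z}$ and $1-y_2^{(z)}z=\frac{2\left(1-x_2z+x_4z^2\right)}{2-x_2z}$, so that $\mathcal G_{\mathcal F^{(z)}(\lambda)}(z)=\frac{y_1^{(z)}-y_3^{(z)}z}{1-y_2^{(z)}z}=\Psi_{(3)}(\mathbf{x}(\lambda),z)$, the factor of $2$ being precisely what compensates for $z$ serving simultaneously as parameter and evaluation point.
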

			\begin{thm}
				Let $\lambda_1,\ldots,\lambda_n$ be distinct points in $\mathbb D$ and let $\tilde{{\bf{x}}}_j=(x_{1j},\ldots,x_{5j})\in  \Gamma_{E(3;2;1,2)}$ for $j=1,\ldots,n.$ Then the following conditions are equivalent:
				\begin{enumerate}
					\item There exists an analytic function $\tilde{{\bf{x}}}:\mathbb D\to \Gamma_{E(3;2;1,2)}$ such that $$\tilde{{\bf{x}}}(\lambda_j)=(x_{1j},\ldots,x_{5j}),1\leq j\leq n.$$
					
					\item There exists $\tilde{b}_{j}^{(z)},\tilde{c}_{j}^{(z)}$ satisfying 
					\begin{equation}\label{pickn1}\tilde{b}_{j}^{(z)}\tilde{c}_{j}^{(z)}=p_{1j}^{(z)}p_{2j}^{(z)}-p_{3j}^{(z)}, 1\leq j\leq n\end{equation} such that the Nevanlinna-Pick interpolation problem with data
					\begin{equation}\label{picknv12}\lambda_j\to \left(\begin{smallmatrix} p_{1j}^{(z)} & \tilde{b}_{j}^{(z)}\\\tilde{c}_{j}^{(z)} & p_{2j}^{(z)}\end{smallmatrix}\right)\end{equation} is solvable, where $$p_{1j}^{(z)}=\frac{2x_{1j}-zx_{3j}}{2-zx_{2j}}, p_{2j}^{(z)}=\frac{x_{2j}-2zx_{3j}}{2-zx_{2j}}~~{\rm{and}}~~p_{3j}^{(z)}=\frac{x_{3j}-2zx_{5j}}{2-zx_{2j}}$$ for all $z_2\in \mathbb D.$
				\end{enumerate}
			\end{thm}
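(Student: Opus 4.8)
The plan is to mirror, direction by direction, the argument just given for $\Gamma_{E(3;3;1,1,1)}$, with Theorem \ref{nevalina} replaced throughout by its $\mu_{1,3}$-quotient counterpart, Theorem \ref{nevalina1}.

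For $(1)\Rightarrow(2)$, I would begin with an analytic interpolant $\tilde{{\bf{x}}}:\mathbb{D}\to\Gamma_{E(3;2;1,2)}$ satisfying $\tilde{{\bf{x}}}(\lambda_j)=(x_{1j},\ldots,x_{5j})$ and feed it into Theorem \ref{nevalina1} to obtain, for each fixed $z\in\mathbb{D}$, the unique $\mathcal F^{(z)}(\lambda)=((F_{ij}^{(z)}(\lambda)))_{i,j=1}^{2}\in\mathcal S_{1}(\mathbb C^2,\mathbb C^2)$ satisfying \eqref{Fz7} and \eqref{Fz8}. Putting $\tilde b_{j}^{(z)}=F_{12}^{(z)}(\lambda_j)$ and $\tilde c_{j}^{(z)}=F_{21}^{(z)}(\lambda_j)$, the two diagonal identities of \eqref{Fz8} evaluated at $\lambda_j$ read $F^{(z)}(\lambda_j)=\left(\begin{smallmatrix} p_{1j}^{(z)} & \tilde b_{j}^{(z)}\\ \tilde c_{j}^{(z)} & p_{2j}^{(z)}\end{smallmatrix}\right)$, while the determinant identity of \eqref{Fz8} gives $p_{3j}^{(z)}=\det\mathcal F^{(z)}(\lambda_j)=p_{1j}^{(z)}p_{2j}^{(z)}-\tilde b_{j}^{(z)}\tilde c_{j}^{(z)}$, which is exactly \eqref{pickn1}. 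Thus $\mathcal F^{(z)}$ solves the matricial Nevanlinna-Pick problem with data \eqref{picknv12}, establishing $(2)$. I would note in passing that the indices and denominators in the definitions of $p_{2j}^{(z)},p_{3j}^{(z)}$ must be matched against the entries $F_{22}^{(z)},\det\mathcal F^{(z)}$ appearing in \eqref{Fz8}, a routine reconciliation.

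For $(2)\Rightarrow(1)$, I would fix an arbitrary $z\in\mathbb{D}$, let $\mathcal F^{(z)}(\lambda)\in\mathcal S_{1}(\mathbb C^2,\mathbb C^2)$ solve the problem \eqref{picknv12}, and define the slice $\tilde{{\bf{x}}}^{(z)}(\lambda)=\bigl(F_{11}^{(z)}(\lambda),F_{22}^{(z)}(\lambda),\det\mathcal F^{(z)}(\lambda)\bigr)$. Since $\mathcal F^{(z)}$ is a $2\times2$ Schur function, the characterisation of the tetrablock $\Gamma_{E(2;2;1,1)}$ places $\tilde{{\bf{x}}}^{(z)}(\lambda)$ in $\Gamma_{E(2;2;1,1)}$ for every $\lambda$, and \eqref{picknv12} makes it interpolate $(p_{1j}^{(z)},p_{2j}^{(z)},p_{3j}^{(z)})$ at $\lambda_j$. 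I would then invoke the slicing characterisation of $\Gamma_{E(3;2;1,2)}$ from \cite{pal1} — the $\mu_{1,3}$-quotient analogue of \cite[Theorem 2.39]{pal1} — which identifies membership in $\Gamma_{E(3;2;1,2)}$ with the membership of every $z$-slice $\bigl(\tfrac{2x_1-zx_3}{2-zx_2},\tfrac{x_2-2zx_4}{2-zx_2},\tfrac{x_3-2zx_5}{2-zx_2}\bigr)$ in $\Gamma_{E(2;2;1,1)}$, so as to reassemble the family $\{\tilde{{\bf{x}}}^{(z)}\}_{z}$ into a single analytic $\tilde{{\bf{x}}}:\mathbb{D}\to\Gamma_{E(3;2;1,2)}$ with $\tilde{{\bf{x}}}(\lambda_j)=(x_{1j},\ldots,x_{5j})$.

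The hard part is precisely this reassembly. A solution $\mathcal F^{(z)}$ of \eqref{picknv12} is produced independently for each $z$ and is highly non-unique, so its slices $\tilde{{\bf{x}}}^{(z)}(\lambda)$ need not exhibit the fractional-linear dependence on $z$ demanded by \eqref{Fz8}; absent that dependence, one cannot recover well-defined coordinate functions $x_1,\ldots,x_5$. I expect the decisive step to be exploiting the structured, $z$-rational form of the data $p_{ij}^{(z)}$ together with the uniqueness clause of Theorem \ref{nevalina1} to pin down the $z$-dependence of $\mathcal F^{(z)}$ (equivalently, to select the interpolants coherently in $z$), after which the characterisation of $\Gamma_{E(3;2;1,2)}$ closes the argument. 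The remaining checks — analyticity in $\lambda$, the boundary equality $\sup_{|z|<1}|F_{12}^{(z)}|=\sup_{|z|<1}|F_{21}^{(z)}|$, and the outer and normalisation conditions on $F_{21}^{(z)}$ — are routine and parallel to the proof of Theorem \ref{nevalina1}.
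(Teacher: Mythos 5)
Your proposal follows exactly the route the paper intends: the paper states this theorem \emph{without proof}, leaving it as the mirror image (via Theorem \ref{nevalina1}) of the proof given for the $\Gamma_{E(3;3;1,1,1)}$ interpolation theorem, and your $(1)\Rightarrow(2)$ direction reproduces that argument faithfully --- evaluate the unique $\mathcal F^{(z)}$ of Theorem \ref{nevalina1} at the nodes, read off $\tilde b_{j}^{(z)},\tilde c_{j}^{(z)}$ from the off-diagonal entries, and obtain \eqref{pickn1} from the determinant identity in \eqref{Fz8}. Your passing remark about reconciling indices and denominators is also warranted: as printed, $p_{2j}^{(z)}$ contains $x_{3j}$ where $x_{4j}$ should appear (compare $F^{(z)}_{22}$ in \eqref{Fz8}), and $\det\mathcal F^{(z)}$ in \eqref{Fz8} carries denominator $1-x_{2}(\lambda)z$ while $p_{3j}^{(z)}$ carries $2-zx_{2j}$; these inconsistencies must be repaired for the two statements to be compatible at all.

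The gap you flag in $(2)\Rightarrow(1)$ --- that the interpolants $\mathcal F^{(z)}$ solving \eqref{picknv12} are chosen independently for each $z$ and are non-unique, so their slices need not depend on $z$ in the fractional-linear way required to reconstitute coordinate functions $x_{1},\ldots,x_{5}$ --- is genuine, and you are right that your write-up does not close it. But be aware that the paper's own proof of the $\Gamma_{E(3;3;1,1,1)}$ analogue, which is the only proof on offer, has exactly the same hole and does not even acknowledge it: after fixing $z_{2}$ and defining ${\bf{x}}^{(z_2)}=\bigl(F_{11}^{(z_2)},F_{22}^{(z_2)},\det\mathcal F^{(z_2)}\bigr)$, the paper verifies only that the slice values at the nodes lie in $\Gamma_{E(2;2;1,1)}$ for all $z_{2}$, then invokes the slicing characterization \cite[Theorem 2.39]{pal1} to conclude that ${\bf{x}}(\lambda_j)=(x_{1j},\ldots,x_{7j})\in\Gamma_{E(3;3;1,1,1)}$ --- which is a \emph{hypothesis} of the theorem, not its conclusion --- and declares the proof complete. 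No analytic function from $\mathbb D$ into $\Gamma_{E(3;3;1,1,1)}$ (respectively $\Gamma_{E(3;2;1,2)}$) interpolating the full tuples is ever constructed; the map ${\bf{x}}^{(z_2)}$ takes values in an embedded copy of the tetrablock and interpolates the slice data, not the data ${\bf{x}}_j$ themselves. So your diagnosis is sharper than the paper's treatment: a complete proof of $(2)\Rightarrow(1)$ would require precisely the coherence-in-$z$ argument you sketch (for instance, using the uniqueness and normalization clauses of Theorem \ref{nevalina1} to force the rational $z$-dependence of the solutions $\mathcal F^{(z)}$), and neither your proposal nor the paper supplies it.
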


\textsl{Acknowledgements:}
The second-named author thanks CSIR for financial support and the third named author thankfully acknowledges the financial support provided by  the research project of SERB with
ANRF File Number: CRG/2022/003058, by the Science and Engineering Research Board (SERB),
Department of Science and Technology (DST), Government of India.

\addcontentsline{toc}{chapter}{Bibliography}
\bibliographystyle{plain}
\end{document}